\newcommand{\cO}{\mathcal O}
\renewcommand{\top}{\rm top}
\renewcommand{\dim}{{\rm dim}\,}
\renewcommand{\ker}{{\rm ker}\,}
\newcommand{\coker}{{\rm coker}\,}
\newcommand{\ev}{{\rm ev}}
\newcommand{\Si}{{\rm Sink}}
\newcommand{\colim}{\operatornamewithlimits{colim}}
\newcommand{\tor}{\mathrm{Tor}}
\newcommand{\tp}{t_+}
\newcommand{\tm}{t_{-}}
\DeclareMathOperator{\cofi}{hocofiber}
\newcommand{\sotimes}{\bar{\otimes}}
\newcommand{\comment}[1]{}  
\def\C{\mathbb{C}}
\def\Q{\mathbb{Q}}
\def\Z{\mathbb{Z}}
\newcommand\triqui{\vartriangleleft}
\def\fB{\mathfrak{B}}
\def\fA{\mathfrak{A}}
\numberwithin{equation}{section}
\theoremstyle{plain}
\newtheorem{thm}[equation]{Theorem}
\newtheorem{cor}[equation]{Corollary}
\newtheorem{lem}[equation]{Lemma}
\newtheorem{prop}[equation]{Proposition}
\theoremstyle{definition}
\theoremstyle{remark}
\newtheorem{rem}[equation]{Remark}
\newtheorem{exa}[equation]{Example}
\newtheorem{notation}{Notation} [equation]
\newtheorem*{ack}{Acknowledgement}
\begin{document}
\bibliographystyle{plain}

\author{ P. Ara}
\author{ M. Brustenga}
\author{G. Corti\~nas}
\headauthor{Ara, Brustenga, Corti\~nas}
\title{$K$-theory of Leavitt path algebras}
\headtitle{Leavitt path algebras}
\thanks{The first and second named authors were partially supported by
DGI MICIIN-FEDER MTM2008-06201-C02-01, and by the Comissionat per
Universitats i Recerca de la Generalitat de Catalunya. The third
named author was supported by CONICET and partially supported by grants PICT
2006-00836, UBACyT X051, and MTM2007-64074.}
\address{P. Ara, M. Brustenga\\
Departament de Matem\`atiques\\ Universitat Aut\`onoma de
Barcelona\\ 08193 Bellaterra (Barcelona), Spain}
\email{para@mat.uab.cat, mbrusten@mat.uab.cat}
\address{G. Corti\~nas\\ Dep. Matem\'atica\\ Ciudad Universitaria Pab 1\\
1428 Buenos Aires, Argentina}
\email{gcorti@dm.uba.ar}\urladdr{http://mate.dm.uba.ar/\~{}gcorti}

\date{\today}

\begin{abstract}
Let $E$ be a row-finite quiver and let $E_0$ be the set of vertices
of $E$; consider the adjacency matrix
$N'_E=(n_{ij})\in\Z^{(E_0\times E_0)}$, $n_{ij}=\#\{$ arrows from
$i$ to $j\}$. Write $N^t_E$ and $1$ for the matrices $\in
\Z^{(E_0\times E_0\setminus\Si(E))}$ which result from $N'^t_E$ and
from the identity matrix after removing the columns corresponding to
sinks. We consider the $K$-theory of the Leavitt algebra
$L_R(E)=L_\Z(E)\otimes R$. We show that if $R$ is either a
Noetherian regular ring or a stable $C^*$-algebra, then there is an
exact sequence ($n\in\Z$)
\[
\xymatrix{K_n(R)^{(E_0\setminus\Si(E))}\ar[r]^(.6){1-N_E^t}& K_n(R)^{(E_0)}\ar[r]& K_n(L_R(E))\ar[r]&
K_{n-1}(R)^{(E_0\setminus\Si(E))}}
\]
We also show that for general $R$, the obstruction for having a sequence as above is measured by twisted
nil-$K$-groups. If we replace $K$-theory by homotopy algebraic $K$-theory, the obstructions dissapear, and
we get, for every ring $R$, a long exact sequence
\begin{multline*}
KH_n(R)^{(E_0\setminus\Si(E))}\stackrel{1-N_E^t}{\longrightarrow}KH_n(R)^{(E_0)}\to
KH_n(L_R(E))\to KH_{n-1}(R)^{(E_0\setminus\Si(E))}
\end{multline*}
We also compare, for a $C^*$-algebra $\fA$, the algebraic $K$-theory of $L_\fA(E)$ with the topological
$K$-theory of the Cuntz-Krieger algebra $C^*_\fA(E)$. We show that the map
\[
K_n(L_\fA(E))\to K^{\top}_n(C^*_\fA(E))
\]
is an isomorphism if $\fA$ is stable and $n\in\Z$, and
also if $\fA=\C$, $n\ge 0$, $E$ is finite with no sinks, and $\det(1-N_E^t)\ne 0$.
\end{abstract}

\maketitle

\section{Introduction}
We consider the $K$-theory of the Leavitt algebra
$L_R(E)=L_\Z(E)\otimes R$ of a row-finite quiver $E$ with
coefficients in a ring $R$. To state our results, we need some
notation. Let $E_0$ be the set of vertices of $E$; consider the
adjacency matrix $N'_E=(n_{ij})\in\Z^{(E_0\times E_0)}$,
$n_{ij}=\#\{$ arrows from $i$ to $j\}$. Write $N^t_E$ and $1$ for
the matrices $\in \Z^{(E_0\times E_0\setminus\Si(E))}$ which result
from $N'^t_E$ and from the identity matrix after removing the
columns corresponding to sinks. Our results relate the $K$-theory of
$L_R(E)$ with the spectrum
\[
C=\cofi(K(R)^{(E_0\setminus \Si (E))}\overset{1-N_E^t}\longrightarrow
K(R)^{(E_0)})
\]
In terms of homotopy groups, the fundamental property of $C$ is that there is a long exact sequence $(n\in\Z)$
\begin{equation}\label{intro:cles}
\xymatrix{K_n(R)^{(E_0\setminus\Si(E))}\ar[r]^(.6){1-N_E^t}& K_n(R)^{(E_0)}\ar[r]& \pi_n(C)\ar[r]&
K_{n-1}(R)^{(E_0\setminus\Si(E))}}
\end{equation}
For a rather general class of rings (which includes all unital ones) and all row-finite quivers $E$, we show
(Theorem \ref{row-finitecase}) that there is a naturally split injective map
\begin{equation}\label{intro:map}
\pi_*(C)\to K_*(L_R(E))
\end{equation}
The cokernel of \eqref{intro:map}
can be described in terms of twisted nil-$K$-groups (see \ref{thm:skewle}, \ref{rem:coker}). We show that these
nil-$K$-groups vanish for some classes of rings $R$, including the following two cases:
\begin{itemize}
\item $R$ is a regular supercoherent ring (see \ref{rf-coh}). In
particular this covers the case where $R$ is a Noetherian regular
ring.
\item $R$ is a stable $C^*$-algebra (see \ref{cor:stablereg}).
\end{itemize}
In particular for such $R$ we get a long exact sequence
\begin{multline}\label{intro:les}
K_n(R)^{(E_0\setminus\Si(E))}\stackrel{1-N_E^t}{\longrightarrow} K_n(R)^{(E_0)}\to K_n(L_R(E))\to
K_{n-1}(R)^{(E_0\setminus\Si(E))}
\end{multline}
We also consider Weibel's homotopy algebraic $K$-theory
$KH_*(L_R(E))$. We show in \ref{thm:kh} that for any ring $R$ and
any row-finite quiver, there is a long exact sequence
\begin{multline}\label{intro:khles}
KH_n(R)^{(E_0\setminus\Si(E))}\stackrel{1-N_E^t}{\longrightarrow}KH_n(R)^{(E_0)}\to
KH_n(L_R(E))\\ \to KH_{n-1}(R)^{(E_0\setminus\Si(E))}\nonumber
\end{multline}
There is a natural comparison map $K_*\to KH_*$; if $R$ is a regular
supercoherent ring or a stable $C^*$-algebra, then $K_*(R)\to
KH_*(R)$ and $K_*(L_R(E))\to KH_*(L_R(E))$ are isomorphisms, so the
sequences agree in these cases. We further compare, for a
$C^*$-algebra $\fA$, the algebraic $K$-theory of $L_\fA(E)$ with the
topological $K$-theory of the Cuntz-Krieger algebra $C_\fA^*(E)$; we
show that the natural map
\[
\gamma^\fA_n(E):K_n(L_\fA(E))\to K_n(C_\fA^*(E))\to K^{\top}_n(C_\fA^*(E))
\]
is an isomorphism in some cases, including the following two:
\begin{itemize}
\item $\fA=\C$, $E$ is finite with no sinks, $\det(1-N^t_E)\ne 0$, and $n\ge 0$ (see \ref{thm:sus}).
\item $\fA$ is stable, $E$ is row-finite, and $n\in\Z$ (see \ref{thm:stable}).
\end{itemize}

The rest of this paper is organized as follows.
In Section \ref{sec:huni} we recall the results of Suslin and Wodzicki on excision in $K$-theory and draw
some consequences which are used further on in the article. The most general result on excision in $K$-theory, due to Suslin \cite{sus}, characterizes those rings $A$ on which $K$-theory satisfies excision in terms of the vanishing of $\tor$ groups over the unitalization $\tilde{A}=A\oplus \Z$. Namely $A$ satisfies excision if and only if
\begin{equation}\label{intro:h'uni}
\tor^{\tilde{A}}_*(\Z,A)=0 \qquad (*\ge 0).
\end{equation}
We call a ring $A$ $H'$-unital if it satisfies \eqref{intro:h'uni}; if $A$ is torsion-free as an abelian group, this is the same as saying that $R$ is $H$-unital in the sense of Wodzicki \cite{wod}. We show in
Proposition \ref{prop:huni} that if $A$ is $H'$-unital and $\phi:A\to A$ is an automorphism, then the same is
true of both the twisted polynomial ring $A[t,\phi]$ and the twisted Laurent polynomial ring $A[t,t^{-1},\phi]$.
We recall that, for unital $A$, the $K$-theory of the twisted Laurent polynomials was computed in \cite{gray} and \cite{yao}. If $R$ is a unital ring and $\phi:R\to pRp$ is a corner isomorphism, the twisted Laurent polynomial ring is not defined, but the corresponding object is the corner skew Laurent polynomial ring $R[t_+,t_-,\phi]$ of \cite{skew}. In Section \ref{sec:yao} we use the results of \cite{yao} and of Section \ref{sec:huni} to compute the $K$-theory of $R\otimes A[t_+,t_-,\phi\otimes 1]$ for $(R,\phi)$ as above, and $A$ any nonunital algebra such that $R\otimes A$ is $H'$-unital (Theorem \ref{thm:skewyao}). In the next section we consider
 the relation between two possible ways of defining the incidence matrix of a finite quiver,
 and show that the sequences of the form \eqref{intro:cles} obtained with either of them are essentially equivalent (Proposition \ref{prop:MyN}). In Section \ref{sect:skew} we use
 the results of the previous sections to compute the $K$-theory of the Leavitt algebra of a finite quiver
 with no sources with coefficients in an $H'$-unital ring (Theorem \ref{thm:skewle}). The general case
 of row-finite quivers is the subject of Section \ref{sect:row-finite}. Our most general result is Theorem
 \ref{row-finitecase}, where the existence of the split injective map \eqref{intro:map} is proved for
 the Leavitt algebra $L_A(E)$ of a row-finite quiver $E$. In the latter theorem, $A$ is required to be
 either a ring with local units, or a $\Z$-torsion free $H'$-unital ring. In Section \ref{sec:regcoh}
 we specialize to the case of Leavitt algebras with regular supercoherent coefficient rings. We show that
 the sequence \eqref{intro:les} holds whenever $R$ is regular supercoherent (Theorem \ref{rf-coh}). For example this holds if $R$ is a field, since fields are regular supercoherent; this particular case,
for finite $E$, is used in \cite{fpres} to compute the $K$-theory of the algebra $Q_R(E)$ obtained from $L_R(E)$ after inverting all square matrices with coefficients in the path algebra $P_R(E)$ which are sent to invertible matrices by the augmentation map $P_R(E)\to R^{E_0}$.
Section \ref{sec:kh} is devoted to homotopy algebraic $K$-theory, $KH$. For a unital ring
$R$, a corner isomorphism $\phi:R\to pRp$, and a ring $A$, we compute the $KH$-theory of
$R\otimes A[t_+,t_-,\phi\otimes 1]$ (Theorem \ref{thm:khskewyao}). Then we use this to establish
the sequence \eqref{intro:khles} for any row finite quiver $E$ and any coefficient ring $A$ (Theorem
\ref{thm:kh}).
In the last section we compare the $K$-theory of the Leavitt algebra $L_\fA(E)$ with coefficients in a $C^*$-algebra $\fA$ with the topological $K$-theory of the corresponding Cuntz-Krieger algebra $C^*_\fA(E)$. In Theorem \ref{thm:ktop} we establish the spectrum-level version of the well-known calculation of the topological
$K$-theory of the Cuntz-Krieger algebra $C^*_\fA(E)$ of a row-finite quiver $E$ with coefficients in a
$C^*$-algebra $\fA$. Theorem \ref{thm:sus} shows that if $E$ is a finite quiver without sinks and
such that $\det(1-N_E^t)\ne 0$, then the natural map $\gamma_n^{\C}:K_n(L_\C(E))\to K_n^{\top}(C^*_\C(E))$
is an isomorphism for $n\ge 0$ and the zero map for $n\le -1$. In Theorem \ref{thm:stable} we show
that if $\fB$ is a stable $C^*$-algebra, then $\gamma_n^\fB$ is an isomorphism for all $n\in\Z$.

\section{$H'$-unital rings and skew polynomial extensions}\label{sec:huni}

Let $R$ be a ring and $\tilde{R}=R\oplus\Z$ its unitization. We say that $R$ is {\it $H'$-unital} if
\[
\tor^{\tilde{R}}_*(R,\Z)=0\qquad (*\ge 0).
\]
Note that, for any, not necessarily $H'$-unital ring $R$,
\[
\tor^{\tilde{R}}_*(\Z,R)=\tor^{\tilde{R}}_{*+1}(\Z,\Z)=\tor^{\tilde{R}}_*(R,\Z)\qquad  (*\ge 0)
\]
Thus all these $\tor$ groups vanish when $R$ is $H'$-unital; moreover, in that case we also have
\[
\tor_*^{\tilde{R}}(R,R)=0 \qquad (*\ge 1), \qquad \tor_0^{\tilde{R}}(R,R)=R^2=R.
\]
A right module $M$ over a ring $R$ is called {\it $H'$-unitary} if
$\tor^{\tilde{R}}_*(M,\Z )=0$. The definition of $H'$-unitary for
left modules is the obvious one.

\begin{exa}
If $R$ is $H'$-unital then it is both right and left $H'$-unitary as
a module over itself. Let $\phi:R\to R$ be an endomorphism. Consider
the  bimodule ${}_{\phi}R$ with left multiplication given by $a\cdot
x=\phi(a)x$ and the usual right multiplication. As a right module,
$_{\phi}R\cong R$, whence it is right $H'$-unitary. If moreover
$\phi$ is an isomorphism, then it is also isomorphic to $R$ as a
left module, via $\phi$, and is thus left $H'$-unitary too.
\end{exa}

\begin{rem}\label{rem:hh'}
The notion of $H'$-unitality is a close relative of the notion of $H$-unitality introduced by Wodzicki in
\cite{wod}. The latter notion depends on a functorial complex $C^\mathrm{bar}(A)$, the {\it bar complex} of $A$; we have $C_n^\mathrm{bar}(A)=A^{\otimes n+1}$. The ring $A$ is called $H$-unital if for all abelian groups $V$, the complex $C^\mathrm{bar}(A)\otimes V$ is acyclic. If $A$ is flat as a $\Z$-module, then $C^\mathrm{bar}(A)$ is a complex of flat $\Z$-modules and $H_*(C^\mathrm{bar}(A))=\tor_*^{\tilde{A}}(\Z,A)$. Hence $H'$-unitality is the same as $H$-unitality for rings which are flat as $\Z$-modules. Unital rings
 are both $H$ and $H'$-unital. Because $C^{\mathrm{bar}}$ commutes with filtering colimits, the class of $H$-unital rings is closed under such colimits. Similarly, there is also a functorial complex which computes $\tor^{\tilde{A}}(\Z,A)$ and which commutes with filtering colimits (\cite[6.4.3]{friendly}); hence also the class of $H'$-unital rings is closed under filtering colimits. If $A$ is $H$ or $H'$-unital then the same
 is true of the matrix ring $M_nA$. In the $H$-unital case, this is proved in \cite[9.8]{wod}; the $H'$-unital
 case follows from a theorem of Suslin cited below (Theorem \ref{thm:excisus}). The class of $H$-unital rings is furthermore closed under tensor products (\cite[7.10]{suswod}).
\end{rem}

\begin{lem}\label{lem:Qhuni}
Let $A$ be a ring. If $A$ is $H'$-unital, then $A\otimes\Q$ is $H'$-unital.
\end{lem}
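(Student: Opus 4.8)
The plan is to phrase everything through $\tor$ and to exploit the key observation that $B:=A\otimes\Q$ is a $\Q$-vector space, hence torsion-free, hence flat over $\Z$ — regardless of any flatness property of $A$ itself. By definition I must show $\tor^{\tilde B}_*(\Z,B)=0$ for $*\ge 0$, where $\tilde B=B\oplus\Z$. The strategy is to relate this $\Z$-linear Tor group to the corresponding $\Q$-linear one computed over the $\Q$-unitization $B\oplus\Q$, and then to evaluate the latter by flat base change from the $H'$-unitality of $A$.

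First I would prove a change-of-unitization identity: for any $\Q$-algebra $B$,
\[
\tor^{B\oplus\Z}_*(\Z,B)\cong\tor^{B\oplus\Q}_*(\Q,B).
\]
Since $B$ is flat over $\Z$, Remark \ref{rem:hh'} gives $\tor^{B\oplus\Z}_*(\Z,B)=H_*(C^\mathrm{bar}(B))$ with $C^\mathrm{bar}_n(B)=B^{\otimes_\Z n+1}$. The analogous computation over the ground field $\Q$, where $B$ is automatically flat, identifies $\tor^{B\oplus\Q}_*(\Q,B)$ with the homology of the $\Q$-linear bar complex, whose $n$-th term is $B^{\otimes_\Q n+1}$. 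For a $\Q$-vector space the canonical surjection $B^{\otimes_\Z n+1}\to B^{\otimes_\Q n+1}$ is an isomorphism, and it is compatible with the bar differentials because the multiplication $B\otimes B\to B$ is the same map whether the tensor is formed over $\Z$ or over $\Q$. Hence the two bar complexes coincide and the displayed identity follows.

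Next I would apply flat base change along $\Z\to\Q$. Writing $\tilde A=A\oplus\Z$, the ring $\tilde A\otimes_\Z\Q=(A\otimes\Q)\oplus\Q=B\oplus\Q$ is flat over $\tilde A$ because $-\otimes_\Z\Q$ is exact. Choosing a resolution $P_\bullet\to A$ by projective left $\tilde A$-modules, $P_\bullet\otimes_\Z\Q$ is a projective resolution of $B=A\otimes\Q$ over $B\oplus\Q$, and since the augmentation right module satisfies $\Z\otimes_{\tilde A}(B\oplus\Q)=\Q$ one computes
\[
\tor^{B\oplus\Q}_*(\Q,B)\cong\tor^{\tilde A}_*(\Z,A)\otimes_\Z\Q.
\]
As $A$ is $H'$-unital the right-hand side vanishes, and combining this with the change-of-unitization identity yields $\tor^{\tilde B}_*(\Z,B)=0$, i.e. $A\otimes\Q$ is $H'$-unital.

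The only genuinely delicate point is the flat base-change isomorphism, since $\tilde A$ need not be commutative: I would prove it directly with the resolution $P_\bullet$ rather than by quoting a commutative statement, checking that $P_\bullet\otimes_\Z\Q$ stays projective and remains a resolution over $B\oplus\Q$, and that $\Z\otimes_{\tilde A}(P_\bullet\otimes_\Z\Q)=(\Z\otimes_{\tilde A}P_\bullet)\otimes_\Z\Q$. Everything else is a formal manipulation of bar complexes together with the flatness of $A\otimes\Q$ over $\Z$, which is exactly what lets me dispense with any flatness hypothesis on $A$.
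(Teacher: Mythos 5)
Your proof is correct and is essentially the paper's own argument: both proofs tensor a projective (free) $\tilde{A}$-resolution of $A$ with $\Q$ to identify $\tor^{\tilde{A}\otimes\Q}_*(A\otimes\Q,\Q)$ with $\tor^{\tilde{A}}_*(\Z,A)\otimes\Q=0$, and both then use the bar complex of $A\otimes\Q$ (legitimate because $A\otimes\Q$ is a $\Q$-vector space, hence flat over $\Z$, cf.\ Remark \ref{rem:hh'}) to transfer that vanishing back to the $\Z$-unitization. The only difference is packaging: the paper runs the last step through Wodzicki's notion of $H$-unitality ($H_*(C^{\mathrm{bar}}(A\otimes\Q))=0$, hence $H$-unital, hence $H'$-unital), while you state it as a direct change-of-unitization isomorphism $\tor^{B\oplus\Z}_*(\Z,B)\cong\tor^{B\oplus\Q}_*(\Q,B)$ built from the same two identifications.
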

\begin{proof}
Tensoring with $\Q$ over $\Z$ is an exact functor from
$\tilde{A}$-modules to $ \tilde{A}\otimes\Q$-modules which preserves
free modules. Hence if $L\to A$  is a free $\tilde{A}$-resolution,
then $L\otimes\Q\to A\otimes\Q$ is a free
$\tilde{A}\otimes\Q$-resolution. Moreover,

\[
\Q\otimes_{\tilde{A}\otimes\Q}L\otimes\Q=L\otimes\Q/A\cdot
L\otimes\Q=(L/A\cdot L)\otimes\Q
\]

Hence
\[
\tor_*^{\tilde{A}\otimes\Q}(A\otimes\Q,\Q)=\tor_*^{\tilde{A}}(\Z,A)\otimes\Q
\]
Thus $A$ $H'$-unital implies that
$0=\tor_*^{\tilde{A}\otimes\Q}(A\otimes\Q,\Q)$. But by \cite[\S
2]{wod},
$\tor_*^{\tilde{A}\otimes\Q}(A\otimes\Q,\Q)=H_*(C^{\mathrm{bar}}(A\otimes\Q))$.
Thus $A\otimes\Q$ is $H$-unital, and therefore $H'$-unital.
\end{proof}

\begin{cor}\label{cor:otimes} If $A$ and $B$ are $H'$-unital, and $B$ is a $\Q$-algebra, then $A\otimes B$ is $H'$-unital.
\end{cor}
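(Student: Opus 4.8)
The plan is to bootstrap from Lemma~\ref{lem:Qhuni} together with the tensor-product closure of $H$-unital rings recorded in Remark~\ref{rem:hh'}. First I would note that, since $B$ is a $\Q$-algebra, it is a $\Q$-vector space, hence torsion-free, hence flat as a $\Z$-module; by the equivalence of $H$- and $H'$-unitality for $\Z$-flat rings (Remark~\ref{rem:hh'}), the $H'$-unital ring $B$ is therefore $H$-unital. Likewise $A\otimes\Q$ is a $\Q$-algebra, so it too is $\Z$-flat, and Lemma~\ref{lem:Qhuni} says it is $H'$-unital; being flat, it is then $H$-unital as well.

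The key identification is that, because $\Q\otimes_\Z B\cong B$ for the $\Q$-algebra $B$, associativity of the tensor product gives
\[
A\otimes B \;=\; A\otimes_\Z B \;\cong\; A\otimes_\Z(\Q\otimes_\Z B) \;\cong\; (A\otimes\Q)\otimes_\Z B.
\]
Now both tensor factors $A\otimes\Q$ and $B$ are $H$-unital, so the closure of the class of $H$-unital rings under (integral) tensor products, cited in Remark~\ref{rem:hh'} from \cite[7.10]{suswod}, shows that $(A\otimes\Q)\otimes_\Z B$ is $H$-unital, and hence so is $A\otimes B$. Finally, $A\otimes B$ is itself a module over the $\Q$-algebra $B$, hence a $\Q$-vector space, hence $\Z$-flat, so its $H$-unitality upgrades to $H'$-unitality by Remark~\ref{rem:hh'}; this is exactly the conclusion.

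The only point demanding care—rather than a genuine obstacle—is the back-and-forth between $H$- and $H'$-unitality, which is licensed only for $\Z$-flat rings. This is precisely why the argument is routed through $A\otimes\Q$ and $B$ instead of through $A$ directly: we never need $A$ itself to be $H$-unital or flat, only the two $\Q$-algebras $A\otimes\Q$ and $B$, both of which are automatically $\Z$-flat. The one thing I would verify explicitly is that the cited tensor-product closure is taken over $\Z$ (consistent with our convention $\otimes=\otimes_\Z$), so that it applies verbatim to $(A\otimes\Q)\otimes_\Z B$.
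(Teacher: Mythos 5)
Your proof is correct and is essentially the paper's own argument: the paper's one-line proof invokes exactly the same two ingredients, Lemma~\ref{lem:Qhuni} and the Suslin--Wodzicki result \cite[7.10]{suswod} that tensor products of $H$-unital $\Q$-algebras are $H$-unital, leaving implicit the identification $A\otimes B\cong(A\otimes\Q)\otimes B$ and the flatness-based passage between $H$- and $H'$-unitality that you spell out. Your final concern about the base ring of the tensor-product closure is resolved by observing that both factors are $\Q$-algebras, so $(A\otimes\Q)\otimes_\Z B=(A\otimes\Q)\otimes_\Q B$ and the $\Q$-algebra form of the cited theorem (which is how the paper states it in this proof) applies directly.
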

\begin{proof}
It follows from the previous lemma and from the fact (proved in \cite[7.10]{suswod}) that the tensor product of $H$-unital $\Q$-algebras is $H$-unital.
\end{proof}
\begin{exa} The basic examples of $H'$-unital rings we shall be concerned with are unital rings and
$C^*$-algebras. The fact that the latter are $H'$-unital follows from the results of \cite{sus} and \cite{suswod} (see \cite[6.5.2]{friendly} and Theorem \ref{thm:excisus} below). If $A$ is an $H'$-unital ring and $\fB$ a $C^*$-algebra, then $A\otimes\fB$ is $H'$-unital, by Corollary \ref{cor:otimes}.
\end{exa}

A ring $R$ is said to {\it satisfy excision} in $K$-theory if for every
embedding $R\triqui S$ of $R$ as a two-sided ideal of a unital ring
$S$, the map $K(R)=K(\tilde{R}:R)\to K(S:R)$ is an equivalence. One
can show (see e.g. \cite[1.3]{kabi}) that if $R$ satisfies excision in
$K$-theory and $R$ is an ideal in a nonunital ring $T$, then the map
$K(R)\to K(T:R)$ is an equivalence too.
\goodbreak
The main result about $H'$-unital rings which we shall need is the following.

\begin{thm}\label{thm:excisus}
{\rm (\cite{sus})} A ring $R$ is $H'$-unital if and only if it
satisfies excision in $K$-theory.
\end{thm}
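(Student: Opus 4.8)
Since this is Suslin's theorem \cite{sus}, refining the rational case of Suslin--Wodzicki \cite{suswod}, the plan is to follow their strategy: trade the \emph{homotopical} statement about the relative spectra $K(S:R)$ for a purely \emph{homological} statement about relative general linear groups, and then pin the obstruction to the bar homology $\tor^{\tilde R}_*(\Z,R)$. For an embedding $R\triqui S$ the relative $K$-theory $K(S:R)$ is the homotopy fiber of $K(S)\to K(S/R)$, and after the plus construction one has a fibration $B{\rm GL}(S,R)^+\to B{\rm GL}(S)^+\to B{\rm GL}(S/R)^+$, where ${\rm GL}(S,R)=\ker({\rm GL}(S)\to{\rm GL}(S/R))$. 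Thus, up to $\pi_0$-bookkeeping, the relative $K$-theory is the homology of ${\rm GL}(S,R)$ packaged through the plus construction, and a relative Hurewicz comparison shows that $K(R)=K(\tilde R:R)\to K(S:R)$ is an equivalence for \emph{every} ambient $S$ precisely when the integral homology $H_*({\rm GL}(S,R);\Z)$ depends only on the ideal $R$, not on $S$. This converts ``excision in $K$-theory'' into ``excision for the homology of relative linear groups''.

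The decisive step is the homological calculation. One computes $H_*({\rm GL}(S,R);\Z)$ by a spectral sequence whose input is assembled from the iterated tensor powers $R^{\otimes n}$, regarded as $\tilde R$-bimodules; these are exactly the terms of a functorial complex computing $\tor^{\tilde R}_*(\Z,R)$ (the bar complex $C^{\mathrm{bar}}(R)$ when $R$ is $\Z$-flat, cf. Remark \ref{rem:hh'}). The heart of \cite{suswod,sus} is that the \emph{discrepancy} between the $S$-version and the $\tilde R$-version of this input is measured precisely by $\tor^{\tilde R}_*(\Z,R)$: when these groups vanish the two computations agree and excision follows, whereas a nonzero $\tor^{\tilde R}_n(\Z,R)$ can be arranged to change $H_*({\rm GL}(S,R);\Z)$ for a suitable $S$, forcing a failure of excision in some degree. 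Matching the first nonvanishing Tor group to a low-degree relative $K$-group then yields the ``only if'' direction.

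The main obstacle is carrying this homological identification out with \emph{integral} coefficients. Rationally, \cite{suswod} compares the relevant groups with cyclic and Hochschild homology, where the bar complex and hence $H$-unitality appear transparently; over $\Z$ one must control torsion and the failure of $\Z$-flatness of $R$, which is exactly why the correct hypothesis is $H'$-unitality, the vanishing of $\tor^{\tilde R}_*(\Z,R)$, rather than the a priori weaker $H$-unitality (the two agreeing only in the $\Z$-flat case, cf. Remark \ref{rem:hh'}). Establishing the integral acyclicity and the ambient-independence at this level of generality is the genuinely hard content, and is what makes the result Suslin's rather than a formal consequence of the rational theorem.
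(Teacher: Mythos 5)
The paper does not prove this theorem at all: it is quoted from Suslin \cite{sus} (refining Suslin--Wodzicki \cite{suswod}) and used as a black box, so the only meaningful comparison is against Suslin's published argument, of which your text is a roadmap rather than a proof. There are two genuine gaps. First, your reduction of excision to homology of relative general linear groups is asserted, not established: after the plus construction there is in general \emph{no} fibration $B{\rm GL}(S,R)^+\to B{\rm GL}(S)^+\to B{\rm GL}(S/R)^+$. The map ${\rm GL}(S)\to{\rm GL}(S/R)$ need not be surjective, the plus construction does not preserve fibrations, and the natural map from $B{\rm GL}(S,R)^+$ to the homotopy fiber of $B{\rm GL}(S)^+\to B{\rm GL}(S/R)^+$ (which is what computes $K(S:R)$) is exactly the object whose behavior must be controlled. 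Making this reduction precise --- excision holds if and only if the inclusion ${\rm GL}(\tilde R,R)\subseteq {\rm GL}(S,R)$ induces an isomorphism on homology for every ambient $S$ --- occupies the opening sections of \cite{suswod} and requires relative Volodin-type constructions and control of the $\pi_1$-action on the fiber; none of this appears in your sketch.

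Second, the step you yourself call decisive --- that the discrepancy between the $S$-computation and the $\tilde R$-computation of $H_*({\rm GL}(S,R);\Z)$ is ``measured precisely by'' $\tor^{\tilde R}_*(\Z,R)$ --- is stated in one sentence with no argument. This is the actual content of Suslin's theorem: the forward direction rests on an analysis of the homology of affine groups $R^n\rtimes {\rm GL}_n(S)$ and a spectral-sequence comparison whose collapse is what the vanishing of $\tor^{\tilde R}_*(\Z,R)$ buys integrally (this is where Suslin must go beyond the rational/flat case of \cite{suswod}); the converse requires constructing, from a nonzero $\tor^{\tilde R}_n(\Z,R)$, a specific ambient ring $S$ for which excision visibly fails, not merely noting that it ``can be arranged.'' Your proposal candidly flags this as the genuinely hard content but does not supply it; as it stands, the attempt identifies the correct strategy --- the same one the cited literature follows --- but proves neither implication.
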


Using the theorem above we get the following Morita invariance result for $H'$-unital rings.

\begin{lem}\label{lem:morita} Let $R$ be a unital ring, $e\in R$ an idempotent. Assume $e$ is {\rm full}, that is, assume $ReR=R$.
Further let $A$ be a ring such that both $R\otimes A$  and $eRe\otimes A$ are $H'$-unital. Then the inclusion map $eRe\otimes A\to R\otimes A$ induces an equivalence
$K(eRe\otimes A)\to K(R\otimes A)$.
\end{lem}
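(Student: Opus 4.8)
The plan is to reduce to the classical Morita invariance of $K$-theory for unital rings, using the unitization of $A$ to create a unital ambient situation and then using excision (Theorem~\ref{thm:excisus}) to descend to the nonunital corners. Write $\tilde A=A\oplus\Z$ for the unitization of $A$. Then $R\otimes\tilde A$ and $eRe\otimes\tilde A$ are unital rings, and $R\otimes A\triqui R\otimes\tilde A$ and $eRe\otimes A\triqui eRe\otimes\tilde A$ are two-sided ideals, because $A\triqui\tilde A$. Since $\tilde A/A=\Z$ and $-\otimes\tilde A$ respects the splitting $\tilde A=A\oplus\Z$, the quotients are $R\otimes\tilde A/(R\otimes A)\cong R$ and $eRe\otimes\tilde A/(eRe\otimes A)\cong eRe$. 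All the relevant maps --- the corner inclusions $eRe\otimes A\to R\otimes A$, $eRe\otimes\tilde A\to R\otimes\tilde A$, and $eRe\to R$ --- fit into a single map of short exact sequences of rings.

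First I would handle the unital level. The idempotent $e\otimes 1\in R\otimes\tilde A$ is full, since $(R\otimes\tilde A)(e\otimes 1)(R\otimes\tilde A)=ReR\otimes\tilde A=R\otimes\tilde A$ using $ReR=R$, and its corner is $(e\otimes 1)(R\otimes\tilde A)(e\otimes 1)=eRe\otimes\tilde A$. Hence classical Morita invariance for unital rings gives that the corner inclusion induces an equivalence $K(eRe\otimes\tilde A)\weq K(R\otimes\tilde A)$; the same argument applied to the full idempotent $e\in R$ gives $K(eRe)\weq K(R)$.

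Next I would invoke excision. By hypothesis $R\otimes A$ and $eRe\otimes A$ are $H'$-unital, so by Theorem~\ref{thm:excisus} both satisfy excision in $K$-theory. Applied to the ideal inclusions above, excision identifies the nonunital $K$-theory with the relative $K$-theory, $K(R\otimes A)\weq K(R\otimes\tilde A:R\otimes A)$ and $K(eRe\otimes A)\weq K(eRe\otimes\tilde A:eRe\otimes A)$, and hence produces two homotopy fibration sequences
\[
K(eRe\otimes A)\to K(eRe\otimes\tilde A)\to K(eRe),\qquad K(R\otimes A)\to K(R\otimes\tilde A)\to K(R).
\]
The map of short exact sequences of rings from the first paragraph induces a map between these two fibration sequences, in which the maps on total spaces and on bases are the Morita equivalences of the second paragraph. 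By the five lemma on the associated long exact sequences of homotopy groups, the induced map on fibers $K(eRe\otimes A)\to K(R\otimes A)$ --- which is exactly the map induced by the inclusion --- is an equivalence.

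The main point to verify carefully, and the only place the argument is not completely formal, is that the two fibration sequences really do receive a common map compatible with the Morita equivalences: one must check that the corner inclusion $eRe\otimes\tilde A\to R\otimes\tilde A$ restricts to the ideal inclusion $eRe\otimes A\to R\otimes A$ and descends to the corner inclusion $eRe\to R$ on quotients, so that the relevant squares commute and the excision identifications are natural for these maps. Granting this naturality --- which is immediate from functoriality of $K$-theory and of the excision comparison map --- the five-lemma step goes through.
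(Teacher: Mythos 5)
Your proof is correct and is essentially the paper's own argument: the paper likewise passes to $S=R\otimes\tilde A$ with the full idempotent $f=e\otimes 1$, invokes unital Morita invariance for $K(fSf)\to K(S)$, and then ``applies excision.'' Your write-up simply makes explicit what that last step means --- the two fibration sequences coming from Theorem~\ref{thm:excisus}, the compatible Morita equivalences $K(eRe\otimes\tilde A)\weq K(R\otimes\tilde A)$ and $K(eRe)\weq K(R)$, and the five lemma --- which is a faithful elaboration rather than a different route.
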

\begin{proof}
Put $S=R\otimes \tilde{A}$, and consider the idempotent $f=e\otimes
1\in S$. One checks that $f$ is a full idempotent, so that
$K(fSf)\to K(S)$ is an equivalence. Now apply excision.
\end{proof}

\begin{prop}\label{prop:huni} Let $R$ be a ring and $\phi:R\to R$ an automorphism. Assume $R$ is $H'$-unital. Then
$R[t,\phi]$ and $R[t,t^{-1},\phi]$ are $H'$-unital rings.
\end{prop}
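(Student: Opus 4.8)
The plan is to exploit the grading of $A\deq R[t,\phi]$ by powers of $t$ and to reduce the vanishing of $\tor^{\tilde A}_*(\Z,A)$ to the $H'$-unitality of $R$. Since $\phi$ is an automorphism, $A=\bigoplus_{n\ge 0}Rt^n$ is a $\N$-graded ring with $A_0=R$ and $A_n=Rt^n$, and each $A_n$ is the $R$-bimodule $M^{\otimes_R n}$, where $M\deq Rt={}_1R_\phi$ carries the usual left and the $\phi$-twisted right $R$-action. Because $\phi$ is bijective, $M\cong R$ both as a left and as a right $R$-module, so $M$ is an $H'$-unitary bimodule, and $A=T_R(M)$ is the tensor ring of $M$. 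The unitization $\tilde A$ is graded by adjoining $\Z$ in degree zero, so $\tilde A_0=\tilde R$ and $\tilde A_n=Rt^n$ for $n\ge 1$.

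First I would reduce the computation to a bar-type complex. When $R$ is $\Z$-torsion free it is $H$-unital (Remark \ref{rem:hh'}), $A$ is then flat over $\Z$, and I may use the ordinary bar complex $C^{\mathrm{bar}}(A)$, whose homology with arbitrary coefficients computes the $H$-unitality of $A$; for general $R$ I would instead use the functorial complex of \cite[6.4.3]{friendly} computing $\tor^{\tilde A}_*(\Z,A)$. In either case the complex is graded by total $t$-degree, the differential preserves this grading, and the complex splits as a direct sum of its homogeneous components $C^{(d)}$, $d\ge 0$. The component $C^{(0)}$ is exactly the corresponding complex for $R$, whose homology is $\tor^{\tilde R}_*(\Z,R)=0$ by hypothesis.

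The main obstacle is the vanishing of the positive-degree components $C^{(d)}$, $d\ge 1$. Here the non-unitality of $A$ removes the extra degeneracies available in the unital setting, so the $H'$-unitality of $R$ must be fed in by hand. The point is that $C^{(d)}$ is, up to the identifications $A_n\cong R$, a two-sided bar complex of $R$ with coefficients in a bimodule assembled from copies of $R$ and of $M$ (the $t$-carrying tensor slots contributing a total degree $d$). Each such bimodule is $H'$-unitary, since $M$ is, and the acyclicity of the two-sided bar complex of an $H$-unital ring with $H'$-unitary bimodule coefficients — which is Wodzicki's machinery underlying \cite{wod,suswod} — yields $H_*(C^{(d)})=0$ for $d\ge 1$. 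Organizing this via the tensor-ring presentation $A=T_R(M)$ and the associated short resolution $0\to A\otimes_R M\otimes_R A\to A\otimes_R A\to A\to 0$ gives a convenient way to produce the required contracting homotopies. Summing over $d$ shows $\tor^{\tilde A}_*(\Z,A)=0$, so $A=R[t,\phi]$ is $H'$-unital.

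Finally, for the twisted Laurent ring $R[t,t^{-1},\phi]$ I would run the identical argument using the full $\Z$-grading: now $A_n=Rt^n\cong R$ for every $n\in\Z$ and $M$ is invertible, so the same splitting by total degree and the same bimodule-bar acyclicity apply verbatim, with $C^{(0)}$ again the complex for $R$ and every $C^{(d)}$, $d\ne 0$, acyclic. (For the passage from the flat case to arbitrary $R$, one may alternatively combine Lemma \ref{lem:Qhuni} and Corollary \ref{cor:otimes} with the fact, recorded in Remark \ref{rem:hh'}, that the functorial $H'$-complex commutes with filtered colimits.) This establishes that both $R[t,\phi]$ and $R[t,t^{-1},\phi]$ are $H'$-unital.
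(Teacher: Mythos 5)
Your strategy --- split the bar complex of $A=R[t,\phi]$ by total $t$-degree and prove each homogeneous piece acyclic --- has genuine gaps at two points, and the Laurent case is where it actually fails. For the $\Z$-graded ring $A=R[t,t^{-1},\phi]$, the degree-zero component of $C^{\mathrm{bar}}(A)$ is \emph{not} $C^{\mathrm{bar}}(R)$: it contains all balanced cross-terms $Rt^{n_1}\otimes\cdots\otimes Rt^{n_k}$ with $\sum_i n_i=0$ (already $Rt\otimes Rt^{-1}$ in simplicial degree $1$), so its acyclicity is not given by the hypothesis on $R$, and your claim that the polynomial argument "applies verbatim, with $C^{(0)}$ again the complex for $R$" is false. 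New input is needed precisely there; in the paper this is the fact that $t^{-n}R\cong{}_{\phi^n}R$ is \emph{left} $H'$-unitary because $\phi$ is invertible. Second, even in the $\N$-graded case the positive-degree components are not single two-sided bar complexes of $R$ with bimodule coefficients: for $d\ge2$ the component $C^{(d)}$ is indexed by compositions of $d$ and its differential also \emph{merges} adjacent $t$-carrying slots (e.g.\ $A_1\otimes A_1\to A_2$), so one needs an additional filtration/spectral-sequence argument of the kind carried out in \cite{suswod} for tensor products; invoking "Wodzicki's machinery" leaves the main step unproved. Finally, your fallback for non-flat $R$ cannot work: Lemma \ref{lem:Qhuni} and Corollary \ref{cor:otimes} only control $A\otimes\Q$, and rational information can never detect the integral vanishing $\tor^{\tilde{A}}_*(\Z,A)=0$ that $H'$-unitality demands.

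For comparison, the paper's proof avoids the bar complex entirely by a one-step base change: if $P\to R$ is a projective resolution of right $\tilde{R}$-modules, then $Q=P\otimes_{\tilde{R}}\widetilde{R[t,\phi]}$ has homology $\tor_*^{\tilde{R}}(R,\widetilde{R[t,\phi]})$, which is computed from the bimodule decomposition $\widetilde{R[t,\phi]}\cong\tilde{R}\oplus\bigoplus_{n\ge1}R_{\phi^n}$ (respectively $\tilde{R}\oplus\bigoplus_{n\ge1}(R_{\phi^n}\oplus{}_{\phi^n}R)$ in the Laurent case) together with the $H'$-unitarity of each summand; hence $Q$ is a projective resolution of $R[t,\phi]$, and $Q\otimes_{\widetilde{R[t,\phi]}}\Z=P\otimes_{\tilde{R}}\Z$ is acyclic by hypothesis. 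In other words, it is the decomposition of the \emph{unitization} as an $R$-bimodule, not the grading of the bar complex, that makes the polynomial and Laurent cases go through uniformly; if you want to salvage your approach, recasting your grading observation in that form is the way to do it.
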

\begin{proof}
If $P$ is a projective resolution of $R$ as a right $\tilde{R}$-module,
then $P\otimes_{\tilde{R}} \widetilde{R[t,\phi]}$ is a complex of right $\widetilde{R[t,\phi]}$-
projective modules. Moreover, we have an isomorphism of $R$-bimodules
\[
\widetilde{R[t,\phi]}=\tilde{R}\oplus\bigoplus_{n=1}^\infty
Rt^n\cong \tilde{R}\oplus\bigoplus_{n=1}^\infty R_{\phi^n}
\]
Thus, because $R$ is assumed $H'$-unital,
\[
H_*(P\otimes_{\tilde{R}}R)=\tor_*^{\tilde{R}}(R,R)=\left\{\begin{matrix} 0 & *\ge 1\\ R & *=0\end{matrix}\right.
\]
Here we have  used only the left module structure of $R$; the
identities above are compatible with any right module structure, and
in particular with both the usual one and that induced by $\phi^n$.
It follows that
\[
Q=P\otimes_{\tilde{R}}\widetilde{R[t,\phi]}
\]
is a projective resolution of $R[t,\phi]$ as a right
$\widetilde{R[t,\phi]}$-module. Since $\tilde{R}\to
\widetilde{R[t,\phi]}$ is compatible with augmentations, we have
\[
Q\otimes_{\widetilde{R[t,\phi]}} \Z=P\otimes_{\tilde{R}}\Z
\]
Hence $R[t,\phi]$ is $H'$-unital. Next we consider the case of the skew Laurent polynomials. We have
a bimodule isomorphism
\[
\widetilde{R[t,t^{-1},\phi]}=\tilde{R}\oplus\bigoplus_{n=1}^\infty
(Rt^n\oplus t^{-n}R) \cong \tilde{R}\oplus\bigoplus_{n=1}^\infty
(R_{\phi^n}\oplus {}_{\phi^n} R)
\]
Thus since ${}_{\phi^n}R$ is left $H'$-unitary, the same argument as
above shows that $R[t,t^{-1},\phi]$ is $H'$-unital.
\end{proof}

\section{$K$-theory of twisted Laurent polynomials}\label{sec:yao}

Let $X$, $N_+$, $N_-$ and $Z$ be objects in a triangulated category
$\mathcal T$. Let $\phi:X\to X$ and $j^{\pm}\colon X\oplus
N_{\pm}\to Z$ be maps in $\mathcal T$. Let $i^{\pm}:X\to X\oplus
N_{\pm}$ be the inclusion maps. Define a map
\[
\psi=\left[\begin{matrix}i^+&i^+\\ i^{-}&
i^{-}\circ\phi\end{matrix}\right]: X\oplus X\to (X\oplus N_+)\oplus
(X\oplus N_-),
\]
\begin{lem}\label{lem:fibs}
Assume
\[
\xymatrix{X\oplus X\ar[r]^(.3){\psi} & (X\oplus N_+)\oplus (X\oplus
N_-)\ar[r]^(.8){[j^+,j^-]}& Z \ar[r]^(.3){\partial}  & \Sigma
X\oplus \Sigma X }
\]
is an exact triangle in $\mathcal T$. Then
\begin{equation}
\xymatrix{ X \ar[rr]^(.35){[0,1-\phi,0]}&& N_+\oplus X\oplus N_-\ar[rrr]^(.6){[j^+|_{N_+},-j^-|_{X},j^-|_{N_-}]} &&&Z \ar[r]^{\partial '}& \Sigma X}
\end{equation}
is an exact triangle in $\mathcal T$, for
suitable $\partial '$.
In particular,
\[ Z\cong
N_+\oplus N_-\oplus \text{\rm cone}(1-\phi:X\to X).
\]
\end{lem}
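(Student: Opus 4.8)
The plan is to bring $\psi$ into block-diagonal form by elementary automorphisms of its source and target, and then to cancel the resulting identity summand from the exact triangle.

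First I would write the target as $(X\oplus N_+)\oplus(X\oplus N_-)=X'\oplus N_+\oplus X''\oplus N_-$, where $X'$ and $X''$ denote the two copies of $X$, and record $\psi$ as the map sending $(x_1,x_2)$ to $(x_1+x_2,\,0,\,x_1+\phi x_2,\,0)$. Thus $\psi$ has image in $X'\oplus X''$ and is there governed by the block $\bigl[\begin{smallmatrix}1&1\\1&\phi\end{smallmatrix}\bigr]$. Performing the invertible source substitution $x_1=a+b,\ x_2=-b$ and then subtracting the $X'$-coordinate from the $X''$-coordinate in the target (both automorphisms in $\mathcal T$) turns $\psi$ into the block-diagonal map $1_X\oplus[0,1-\phi,0]$ for the decompositions $X\oplus X$ and $X'\oplus(N_+\oplus X''\oplus N_-)$, the identity landing isomorphically onto $X'$. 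Since these automorphisms fix $Z$, conjugating the hypothesised exact triangle yields an isomorphic, hence exact, triangle whose first map is $1_X\oplus[0,1-\phi,0]$ and whose middle map is $[j^+,j^-]$ precomposed with the inverse target automorphism.

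Next I would apply the elementary cancellation principle for triangulated categories: if the first map of an exact triangle has the form $u\oplus f\colon A_1\oplus A_2\to B_1\oplus B_2$ with $u$ invertible, then this triangle is isomorphic to the direct sum of the contractible triangle $A_1\xrightarrow{u}B_1\to 0\to\Sigma A_1$ and a triangle $A_2\xrightarrow{f}B_2\xrightarrow{g_2}Z\xrightarrow{\partial'}\Sigma A_2$, which is therefore exact; here $g_2$ is the restriction to $B_2$ of the middle map. This rests on the facts that direct sums of exact triangles are exact and that a morphism of triangles which is the identity on two vertices is an isomorphism, so the ambient $Z$ may be used in place of $\mathrm{cone}(f)$. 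Taking $A_1=B_1=X'$, $u=1_X$, $A_2=X$, $B_2=N_+\oplus X''\oplus N_-$ and $f=[0,1-\phi,0]$ produces the asserted triangle, and computing the restriction of the middle map to $B_2$ identifies it, up to sign, with $[j^+|_{N_+},-j^-|_X,j^-|_{N_-}]$.

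Finally, for the concluding isomorphism, I would observe that $[0,1-\phi,0]$ factors through the middle summand $X$ of $N_+\oplus X\oplus N_-$, so under the identification $N_+\oplus X\oplus N_-\cong(N_+\oplus N_-)\oplus X$ it coincides with the direct sum $(0\colon 0\to N_+\oplus N_-)\oplus(1-\phi\colon X\to X)$; since the cone of a direct sum of maps is the direct sum of the cones, $Z\cong N_+\oplus N_-\oplus\mathrm{cone}(1-\phi)$. I expect the main difficulty to lie in the cancellation step: because cones are not functorial in a triangulated category, the argument must proceed through the direct-sum-of-triangles comparison rather than by literally deleting a summand, and one must check that this genuinely identifies the surviving middle map with the restriction $g_2$ --- which is also where the precise signs get pinned down, only up to the isomorphism of triangles that is absorbed into the choice of $\partial'$.
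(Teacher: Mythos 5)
Your proposal is correct and takes essentially the same route as the paper: your source and target elementary automorphisms are precisely the paper's factorization $\psi=\psi_1\psi_2\psi_3$ with $\psi_1,\psi_3$ invertible and $\psi_2=1_X\oplus[0,1-\phi,0]$, after which the contractible summand $X\xrightarrow{1}X$ is split off by the standard direct-sum/five-lemma comparison, exactly as in the paper's (terser) "the result follows." The one discrepancy is that your particular target operation yields $+j^-|_X$ rather than $-j^-|_X$ in the middle slot, but as you observe this is repaired by the triangle isomorphism $(-1_X,\,\mathrm{diag}(1,-1,1),\,1_Z)$, whose effect is absorbed into $\partial'$; the paper's $\psi_1$, which negates the second copy of $X$ while adding, produces the stated sign directly.
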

\begin{proof}
Note that
\[\psi=\left[\begin{matrix}1 &1\\ 0 & 0 \\ 1 & \phi \\
0 & 0\end{matrix}\right].\]
Consider the maps
\[
\psi _1=\left[\begin{matrix}1 &0 & 0 & 0 \\ 0 & 1 & 0 & 0 \\ 1 & 0 & -1 & 0 \\
0 & 0 & 0 & 1\end{matrix}\right], \psi _2= \left[\begin{matrix}1 & 0\\ 0 & 0 \\ 0 & 1-\phi \\
0 & 0\end{matrix}\right], \text{ and }\psi_3= \left[\begin{matrix}1 &1\\ 0
& 1\end{matrix}\right].
\]
Write $\psi =\psi _1\psi _2\psi _3$. Then we have an exact triangle
\[
\xymatrix{X\oplus X\ar[r]^(.3){\psi_2} & (X\oplus N_+)\oplus
(X\oplus N_-)\ar[r]^(.8){\tilde{j}} & Z\ar[r]^(.3){\partial ''} &
\Sigma X\oplus \Sigma X }
\]
Here $\tilde{j}=[j^+,j^-]\psi _1$ and $\partial ''=\psi _3\partial
$. The result follows. \end{proof}

Let $\phi:X\to X$ be a map of spectra. We write $\phi^{-1}X$ for the
colimit of the following direct system
\[
\xymatrix{X\ar[r]^\phi& X\ar[r]^\phi& X\ar[r]^\phi& X\ar[r]^\phi&\dots}
\]

\begin{lem}\label{lem:cofi} Let $X$ and $\phi$ be as above, and consider the map $\hat{\phi}:\phi^{-1}X\to \phi^{-1}X$
induced by $\phi$. Then
\[
\cofi(1-\phi:X\to X)\cong \cofi(1-\hat{\phi}:\phi^{-1}X\to \phi^{-1}X)
\]
\end{lem}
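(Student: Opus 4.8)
The plan is to exhibit both cofibers as sequential homotopy colimits of one and the same ladder of exact triangles. Write $c\colon X\to\phi^{-1}X$ for the canonical map into the colimit, and note that $\phi$ commutes with $1-\phi$ on the nose, since $\phi(1-\phi)=\phi-\phi^2=(1-\phi)\phi$. The only external inputs I need are that $\hoco$ of a sequential system is additive and preserves exact triangles.

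First I would pin down the map induced by $1-\phi$ on the colimit. Since $\hat\phi$ is, by definition, the self-map induced on $\phi^{-1}X=\hoco_m X$ by the level-wise map $\phi$ of the constant system $X\overset\phi\to X\overset\phi\to\cdots$, while the identity of $X$ induces the identity of $\phi^{-1}X$, additivity of $\hoco$ shows that the level-wise map $1-\phi$ induces exactly $1-\hat\phi$ on $\phi^{-1}X$. (Note that $1-\phi$ and $\phi$ are genuine maps of this constant system, precisely because they commute with the transition maps $\phi$.)

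Next I would build a ladder of exact triangles indexed by $m$. Each row is
\[
X\overset{1-\phi}\longrightarrow X\overset{p}\longrightarrow\cofi(1-\phi)\longrightarrow\Sigma X,
\]
and consecutive rows are connected by the vertical maps $\phi$, $\phi$, and the self-map $\bar\phi$ of $\cofi(1-\phi)$ that these induce; the left square commutes because $\phi$ commutes with $1-\phi$, and the remaining squares commute by functoriality of the triangle. Applying $\hoco_m$ and invoking its exactness, the first two columns have colimit $\phi^{-1}X$ with connecting map $1-\hat\phi$ by the previous paragraph, and the third column has colimit $\hoco_m\cofi(1-\phi)$ formed along $\bar\phi$. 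This yields an exact triangle $\phi^{-1}X\overset{1-\hat\phi}\to\phi^{-1}X\to\hoco_m\cofi(1-\phi)$, hence the identification $\cofi(1-\hat\phi)\cong\hoco_m\cofi(1-\phi)$.

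It remains to collapse this telescope, which is the step I expect to be the crux. I claim the structure map $\cofi(1-\phi)\to\hoco_m\cofi(1-\phi)$ is an equivalence; it suffices to check that $\bar\phi$ induces an automorphism of each $\pi_n\cofi(1-\phi)$, for then, since $\pi_*$ commutes with sequential homotopy colimits, that structure map induces on $\pi_n$ the canonical map to $\colim(\pi_n\cofi(1-\phi)\overset{\bar\phi_*}\to\pi_n\cofi(1-\phi)\to\cdots)$, which is an isomorphism. To see the claim, chase the self-map through the long exact sequence
\[
\pi_nX\overset{1-\phi_*}\to\pi_nX\to\pi_n\cofi(1-\phi)\to\pi_{n-1}X\overset{1-\phi_*}\to\pi_{n-1}X.
\]
Here $\phi_*$ is the identity on $\ker(1-\phi_*)$, where $\phi_*x=x$, and on $\coker(1-\phi_*)$, where $\phi_*x-x=-(1-\phi_*)x\equiv 0$. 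Thus $\bar\phi_*$ is the identity on both the sub and the quotient of the two-step filtration of $\pi_n\cofi(1-\phi)$, so $(\bar\phi_*-1)^2=0$ and $\bar\phi_*$ is a (unipotent) automorphism. Combining the two identifications gives $\cofi(1-\hat\phi)\cong\cofi(1-\phi)$, as desired.
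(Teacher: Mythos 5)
Your proof is correct, but it takes a genuinely different route from the paper's. The paper never forms a telescope of cofibers: it maps the triangle of $1-\phi$ directly into the triangle of $1-\hat{\phi}$ via the canonical map $X\to\phi^{-1}X$, observes that $\pi_n(\phi^{-1}X)=\Z[t,t^{-1}]\otimes_{\Z[t]}\pi_nX$, so that the long exact sequence upstairs is exactly the localization of the one downstairs, and then compares the two resulting short exact sequences $0\to\coker(1-\phi_n)\to\pi_n(\cofi)\to\ker(1-\phi_{n+1})\to 0$: since $t$ acts as the identity on $\ker(1-\phi_n)$ and $\coker(1-\phi_n)$, those groups are already $t$-local, the outer vertical maps are isomorphisms, and hence so is the middle one. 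You instead exchange $\hoco$ with $\cofi$ to present $\cofi(1-\hat{\phi})$ as the telescope of $\cofi(1-\phi)$ along the induced self-map $\bar{\phi}$, and then collapse that telescope by showing $\bar{\phi}_*$ is unipotent, hence an automorphism, on each homotopy group. Note that both arguments turn on the very same algebraic germ --- $\phi_*$ acts as the identity on $\ker(1-\phi_*)$ and on $\coker(1-\phi_*)$ --- packaged once as ``these groups are already local, so localization does not change them'' and once as ``$(\bar{\phi}_*-1)^2=0$''. What the paper's route buys is that it exhibits the canonical comparison map $f$ itself as the equivalence (convenient for naturality), using nothing beyond the two long exact sequences and exactness of localization; what yours buys is that it avoids the $\Z[t,t^{-1}]\otimes_{\Z[t]}$ bookkeeping entirely, at the cost of invoking additivity and exactness of sequential $\hoco$ and enough point-set functoriality (available for spectra, though not in a bare triangulated category) to form the ladder of triangles, choose $\bar{\phi}$ coherently, and pass to its colimit. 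Your composite equivalence is in fact homotopic to the paper's map $f$, since the stage-zero structure map of your telescope is precisely the paper's comparison diagram, but nothing in the application requires you to check this.
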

\begin{proof}

Write $\hat{\phi}:\phi^{-1}X\to \phi^{-1}X$ for the induced map; we
have a homotopy commutative diagram
\begin{equation}\label{triang}
\xymatrix{\phi^{-1}X\ar[r]^{1-\hat{\phi}}&\phi^{-1}X\ar[r]&\cofi(1-\hat{\phi})\\
          X\ar[r]^{1-\phi}\ar[u]&X\ar[u]\ar[r]&\cofi(1-\phi)\ar[u]^f }
\end{equation}
Both the top and bottom rows are fibration sequences. We have to
show that the map of stable homotopy groups
$f_n:\pi_n\cofi(1-\phi)\to \pi_n\cofi(1-\hat{\phi})$ induced by $f$
is an isomorphism. Denote by $\phi _n$ the endomorphism of $\pi
_n(X)$ induced by $\phi$.
 Note that $\phi_n$ induces a $\Z[t]$-action
on $\pi_nX$, and that
\[
\pi_n(\phi^{-1}X)=\Z[t,t^{-1}]\otimes_{\Z[t]}\pi_nX=:\phi_n^{-1}\pi_nX
\]
It follows that the long exact sequence of homotopy groups associated to the top fibration of \eqref{triang}
is the result of applying the functor $\Z[t,t^{-1}]\otimes_{\Z[t]}$ to that of the
bottom. In particular the left and right vertical maps in the diagram below are isomorphisms
\[
\xymatrix{0\to \coker(1-\hat{\phi}_n)\ar[r]& \pi_n(\cofi (1-\hat{\phi}))\ar[r]&\ker (1-\hat{\phi}_{n+1})\to 0\\
0\to \coker(1-\phi_n)\ar[u]\ar[r]& \pi_n(\cofi (1-\hat\phi))\ar[u]^{f_n}\ar[r]&\ker (1-\phi_{n+1})\to 0\ar[u]}
\]
It follows that $f$ is an equivalence, as wanted.
\end{proof}

It will be useful to introduce the following notation.

\begin{notation}
Let $A$ be a unital ring and let $\phi\colon A\to A$ be an
automorphism. Define $NK(A,\phi)_+=\cofi(K(A)\to K(A[t,\phi])$ and
$NK(A,\phi)_-=\cofi(K(A)\to K(A[t,\phi^{-1}])$. We have
$$K(A[t,\phi])=K(A)\oplus NK(A, \phi)_+\, , \qquad K(A[t,\phi ^{-1}])=K(A)\oplus NK(A,\phi)_-.$$
Now let $A$ be an arbitrary ring and let $\phi \colon A\to A$ be an
endomorphism. Write $B=\phi^{-1}A$ for the colimit of the inductive
system
\[
\xymatrix{A\ar[r]^\phi& A\ar[r]^\phi & A\ar[r]^\phi&\dots}
\]
Then $\phi$ induces an automorphism $\hat{\phi}:B\to B$ and we can
extend it to the unitization $\tilde{B}$. Put
$$NK(A,\phi)_+: =NK(\tilde{B},\hat{\phi})_+\, , \qquad NK(A,\phi)_-:=NK(\tilde{B},\hat{\phi})_-\, ,$$
so that $K(\tilde{B}[t,\hat{\phi}])=K(\tilde{B})\oplus NK(A,
\phi)_+$ and similarly for $K(\tilde{B}[t,\hat{\phi}^{-1}])$.
Observe that this definition of $NK(A,\phi)_{\pm}$ agrees with the
above when $A$ is unital and $\phi$ is an automorphism. Moreover we
have $NK(A,\phi)_{\pm}=NK(B,\hat{\phi})_{\pm}$.
\end{notation}
\begin{lem}\label{lem:nkhuni} Let $A$ be $H'$-unital, $\phi:A\to A$ an endomorphism, and $B=\phi^{-1}A$. Then
$K(B[t,\phi^{\pm 1}])\cong\phi^{-1}K(A)\oplus NK(A,\phi)_{\pm}$.
\end{lem}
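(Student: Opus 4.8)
The plan is to push everything up to the unital ring $\tilde B$, where the splitting off of the $NK$-summand is built into the definition, and then to use excision to transport the resulting decomposition down to the $H'$-unital ideal $B[t,\hat\phi^{\pm 1}]$ (I write $\hat\phi$ for the automorphism of $B$ induced by $\phi$, extended to $\tilde B$ by fixing $\Z$). I treat the $+$ case, the $-$ case being identical with $\hat\phi$ replaced by $\hat\phi^{-1}$. First I would collect the $H'$-unitality inputs. Since $B=\phi^{-1}A$ is a filtering colimit of copies of $A$ and $A$ is $H'$-unital, $B$ is $H'$-unital by the colimit-closure recorded in Remark \ref{rem:hh'}. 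As $\hat\phi\colon B\to B$ is an automorphism, Proposition \ref{prop:huni} then gives that $B[t,\hat\phi]$ is $H'$-unital as well, so that $B[t,\hat\phi]$ satisfies excision in $K$-theory by Theorem \ref{thm:excisus}.

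Next I would compare the honest unitization $\widetilde{B[t,\hat\phi]}$ with the twisted polynomial ring $\tilde B[t,\hat\phi]$ of the unitization. The ring $B[t,\hat\phi]$ sits as a two-sided ideal $B[t,\hat\phi]\triqui \tilde B[t,\hat\phi]$ with quotient the ordinary (untwisted, since $\hat\phi$ fixes $\Z$) polynomial ring $\Z[t]$. By excision, $K(B[t,\hat\phi])\weq K(\tilde B[t,\hat\phi]:B[t,\hat\phi])$, that is, there is a fibration sequence
\[
K(B[t,\hat\phi])\to K(\tilde B[t,\hat\phi])\to K(\Z[t]).
\]
The augmentations $t\mapsto 0$ split the inclusions $\tilde B\to\tilde B[t,\hat\phi]$ and $\Z\to\Z[t]$, and they do so compatibly with the quotient maps. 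Hence this fibration sequence splits as the direct sum of the two fibration sequences obtained from the decompositions $K(\tilde B[t,\hat\phi])=K(\tilde B)\oplus NK(\tilde B,\hat\phi)_+$ and $K(\Z[t])=K(\Z)\oplus NK(\Z)_+$ furnished by the Notation.

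Finally I would identify the two summands of the fiber. Because $\Z$ is regular, $NK(\Z)_+=0$, so the $NK$-part of the fiber is just $NK(\tilde B,\hat\phi)_+$, which is $NK(A,\phi)_+$ by definition. The complementary part is $\mathrm{fib}(K(\tilde B)\to K(\Z))$, which is exactly $K(B)=K(\widetilde B:B)$; and since $K$-theory commutes with filtering colimits, $K(B)=K(\phi^{-1}A)=\phi^{-1}K(A)$. Combining the two identifications yields $K(B[t,\hat\phi])\cong\phi^{-1}K(A)\oplus NK(A,\phi)_+$, as wanted. The one point genuinely requiring care — and the only real obstacle — is verifying that the two $NK$-decompositions are compatible with the quotient map $\tilde B[t,\hat\phi]\to\Z[t]$, so that one may take fibers summand by summand; this reduces to the commuting square of $t\mapsto 0$ augmentations together with the naturality of the splitting in the definition of $NK$.
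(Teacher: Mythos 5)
Your proposal is correct and follows essentially the same route as the paper's proof: apply excision (via Proposition \ref{prop:huni} and Theorem \ref{thm:excisus}) to the ideal $B[t,\hat\phi^{\pm 1}]\triqui \tilde B[t,\hat\phi^{\pm 1}]$ with quotient $\Z[t]$, split off the $NK$-summand using the augmentation $t\mapsto 0$, and identify $K(B)\cong\phi^{-1}K(A)$ by compatibility of $K$-theory with filtering colimits. You merely make explicit two points the paper leaves implicit, namely the compatibility of the two augmentation splittings with the quotient map and the vanishing of $NK(\Z)_+$.
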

\begin{proof} We have
an exact sequence
\[
0\to B[t,\hat{\phi}^{\pm 1}]\to \widetilde{B[t,\hat{\phi}^{\pm 1}]}\to \Z[t]\to 0
\]
By Proposition \ref{prop:huni}, the ring $B[t,\hat{\phi}^{\pm 1}]$
is $H'$-unital. Hence $K(B[t,\hat{\phi}^{\pm 1}])=K(B)\oplus
NK(B,\phi)_\pm$, by excision. Next, the fact that $K$-theory
preserves filtering colimits (see \cite[IV.6]{Wei} for the unital
case; the non-unital case follows from the unital case by using that
unitization preserves colimits --because it has a right adjoint--
and that $ K(A)=K(\tilde{A}:A)$) implies that
$K(B)\cong\phi^{-1}K(A)$.
\end{proof}

We shall make use of the construction of the corner skew Laurent
polynomial ring $S[t_+,t_-,\phi]$, for a corner-isomorphism $\phi
\colon S\to pSp$; see \cite{skew}.

\begin{thm}\label{thm:skewyao}
Let $R$ be a unital ring and let $A$ be a ring. Let
$\phi:R\to pRp$ be a corner-isomorphism. Assume that $R\otimes A$ is $H'$-unital. Then there is a homotopy
fibration of nonconnective spectra
\[
\xymatrix{K(R\otimes A ) \ar[r]^(.2){1-\phi\otimes 1}&K(R\otimes
A)\oplus NK(R\otimes A,\phi\otimes 1)_+\oplus NK(R\otimes
A,\phi\otimes 1)_-} \]
\[ \xymatrix{ {}  \ar[r] & K((R\otimes
A)[\tp,\tm,\phi\otimes 1])}
\]
In other words, \begin{align*}  K((R\otimes A)[\tp,\tm,\phi\otimes
1]) & = NK(R\otimes A,\phi\otimes 1)_+\oplus NK(R\otimes
A,\phi\otimes 1)_- \\ & \oplus \cofi(K(R\otimes
A)\overset{1-\phi\otimes 1}\longrightarrow K(R\otimes A)).
\end{align*}
\end{thm}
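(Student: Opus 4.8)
The goal is to compute $K((R\otimes A)[\tp,\tm,\phi\otimes 1])$, where $\phi\otimes 1$ is a corner isomorphism on $R\otimes A$. The plan is to reduce the corner skew Laurent polynomial ring to an ordinary twisted Laurent polynomial ring over a suitable localization, apply the known $K$-theory computation for the latter, and then feed the result into the abstract triangulated-category machinery of Lemma~\ref{lem:fibs}. First I would recall the standard structural fact about the corner skew Laurent polynomial ring $S[t_+,t_-,\phi]$ for a corner isomorphism $\phi\colon S\to pSp$: it is Morita equivalent to a genuine twisted Laurent polynomial ring over a directed colimit. Concretely, setting $S=R\otimes A$ and writing $B=\phi^{-1}S$ for the colimit of $S\ton{\phi}S\ton{\phi}\cdots$, the induced automorphism $\hat\phi\colon B\to B$ makes $B[t,t^{-1},\hat\phi]$ a full-corner subring (or Morita-equivalent ring) of $S[t_+,t_-,\phi]$. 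I would make this identification precise so that the $K$-theory spectra agree.

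Next I would invoke the hypothesis that $R\otimes A$ is $H'$-unital together with Proposition~\ref{prop:huni} and Lemma~\ref{lem:nkhuni}. Since $R\otimes A$ is $H'$-unital, the colimit $B$ is $H'$-unital (the class is closed under filtering colimits, per Remark~\ref{rem:hh'}), and by Proposition~\ref{prop:huni} both $B[t,\hat\phi^{\pm 1}]$ and $B[t,t^{-1},\hat\phi]$ are $H'$-unital. This is exactly what licenses the use of excision (Theorem~\ref{thm:excisus}) to split off the nil terms. The Morita-invariance statement, Lemma~\ref{lem:morita}, then lets me replace the $K$-theory of $S[t_+,t_-,\phi]$ by that of $B[t,t^{-1},\hat\phi]$, provided I check that the relevant corner idempotent $f=e\otimes 1$ is full — which is where the corner-isomorphism structure is used essentially.

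The heart of the argument is then to apply Lemma~\ref{lem:fibs} in the triangulated category of spectra, taking $X=\phi^{-1}K(R\otimes A)=K(B)$, with $\phi$ there being the induced $\hat\phi$ on $K(B)$. The two objects $N_\pm$ are to be identified with the nil-terms: using Lemma~\ref{lem:nkhuni}, I have $K(B[t,\hat\phi^{\pm1}])\cong\phi^{-1}K(R\otimes A)\oplus NK(R\otimes A,\phi)_\pm$, so $N_+=NK(R\otimes A,\phi\otimes 1)_+$ and $N_-=NK(R\otimes A,\phi\otimes 1)_-$. The map $\psi$ encodes the Mayer--Vietoris-type presentation of $B[t,t^{-1},\hat\phi]$ as built from the two half-line pieces $B[t,\hat\phi]$ and $B[t,\hat\phi^{-1}]$ glued along $B$, twisted by $\hat\phi$; I would verify that the resulting exact triangle matches the hypothesis of Lemma~\ref{lem:fibs}, and that the map labeled $1-\phi$ there becomes $1-\phi\otimes 1$ on $K(R\otimes A)$ after applying Lemma~\ref{lem:cofi}, which identifies $\cofi(1-\hat\phi\colon K(B)\to K(B))$ with $\cofi(1-\phi\otimes 1\colon K(R\otimes A)\to K(R\otimes A))$. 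The conclusion of Lemma~\ref{lem:fibs} then reads off precisely as the claimed decomposition $N_+\oplus N_-\oplus\cofi(1-\phi\otimes 1)$, and the homotopy fibration is the associated exact triangle.

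The step I expect to be the main obstacle is the identification of the Mayer--Vietoris presentation of $B[t,t^{-1},\hat\phi]$ (or of $S[t_+,t_-,\phi]$ directly) with the algebraic input $\psi$ of Lemma~\ref{lem:fibs} — that is, producing the exact triangle whose first map is exactly $\psi$ and checking that the gluing maps $j^\pm$ are induced by the two inclusions of half-line polynomial rings. This requires a genuine ring-theoretic or localization-sequence argument (presumably via a pullback square of rings and the resulting $K$-theory square), and it is where the specific structure of the corner skew Laurent construction from \cite{skew}, rather than soft formal properties, is indispensable. Assembling the splitting from excision is routine once that triangle is in hand.
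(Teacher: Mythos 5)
Your plan follows essentially the same route as the paper's proof: pass to the colimit $B=\phi^{-1}(R\otimes A)$, over which $\phi\otimes 1$ becomes an automorphism $\hat\phi\otimes 1$; use $H'$-unitality (Proposition \ref{prop:huni}, Theorem \ref{thm:excisus}) to descend the Yao-type Mayer--Vietoris fibration from the unital case to the nonunital ring $B$; rearrange it via Lemma \ref{lem:fibs}; and identify $\cofi(1-\hat\phi\otimes 1)$ with $\cofi(1-\phi\otimes 1:K(R\otimes A)\to K(R\otimes A))$ via Lemma \ref{lem:cofi}. The paper does exactly this, in the order: unital automorphism case (Yao plus Lemma \ref{lem:fibs}), then excision, then the corner case.

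The one step where your implementation would not go through as written is the identification of $K((R\otimes A)[\tp,\tm,\phi\otimes 1])$ with $K(B[t,t^{-1},\hat\phi\otimes 1])$. First, the corner inclusion goes the other way around: $(R\otimes A)[\tp,\tm,\phi\otimes 1]$ is isomorphic to the corner $(e_0\otimes 1)B[t,t^{-1},\hat\phi\otimes 1](e_0\otimes 1)$, not vice versa. More seriously, Lemma \ref{lem:morita} cannot be invoked here: it requires the ambient ring to be unital, whereas $B[t,t^{-1},\hat\phi\otimes 1]$ is not (the structure maps of the colimit defining $B$ are non-unital corner embeddings), and passing to the unitalization does not help, since $e_0\otimes 1$ is not a full idempotent there (the two-sided ideal it generates has zero image in $\Z$, so it misses $1$). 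The paper circumvents this by exhibiting $B[\tp,\tm,\hat\phi\otimes 1]$ as the filtering colimit of copies of $(R\otimes A)[\tp,\tm,\phi\otimes 1]$ along the endomorphism $\eta(x)=\tp x\tm$, and observing that, since $\tm\tp=1$, $\eta$ induces the identity on $K$-theory; as $K$-theory commutes with filtering colimits, the two spectra agree. So your outline is correct and matches the paper's, but the Morita step must be replaced by this colimit argument (or by a Morita-invariance statement valid for full idempotents in nonunital $H'$-unital rings, which is not among the lemmas available to you).
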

\begin{proof}
\noindent{Step $1$:} Assume that $\phi$ is a unital isomorphism and
$A=\Z$. In this case the skew Laurent polynomial ring is the crossed
product by $\Z$; $R[\tp,\tm,\phi]=R[t,t^{-1},\phi]$. Let
$i^{\pm}:R\to R[t_{\pm},\phi]$ and $j^\pm:R[t_\pm,\phi]\to
R[\tp,\tm,\phi]$ be the inclusion maps. By the proof of
\cite[Theorem 2.1]{yao}, there is a homotopy fibration
\[
\xymatrix{K(R)\oplus K(R)\ar[r]^(.4){\psi}&K(R[\tp,\phi]) \oplus
K(R[\tm,\phi])\ar[r]^(.6){[j^+,j^-]}& K(R[\tp ,\tm ,\phi])}
\]
and $K(R[t_{\pm},\phi])=K(R)\oplus NK(R,\phi)_{\pm}$. Here
\[
\psi=\left[\begin{matrix}i^+&i^+\\ i^-& i^-\circ\phi\end{matrix}\right]
\]
Application of Lemma \ref{lem:fibs} yields the fibration of the theorem; this finishes the case when $\phi$
is a unital isomorphism. \\

\noindent{Step $2$:} Assume that $B$ is an $H'$-unital ring and that
$\phi\colon B\to B$ is an isomorphism. Then by the previous step,
the augmentation $\tilde{B}\to \Z$ induces a map of fibration
sequences
\[
\xymatrix{K(\tilde{B})\ar[r]^(.25){1-\tilde{\phi}}\ar[d]&K(\tilde{B})
\oplus NK(\tilde{B},\tilde{\phi})_+\oplus
NK(\tilde{B},\tilde{\phi})_-\ar[r]\ar[d]&
K(\tilde{B}[\tp,\tm,\tilde{\phi}])\ar[d]\\
K(\Z)\ar[r]_0 &K(\Z) \ar[r] & K(\Z[t,t^{-1}])}
\]
Since $B[t_\pm,\phi]$ and $B[\tp,\tm,\phi]$ are $H'$-unital by
Proposition \ref{prop:huni},
the fibers of the vertical maps give the fibration of the theorem.\\

\noindent{Step $3$:} Assume that $R$ is unital and let $\phi$ be a
corner isomorphism. Let $A$ be an $H'$-unital ring. Write
$S=\phi^{-1}R$ for the colimit of the inductive system
\[
\xymatrix{R\ar[r]^\phi& R\ar[r]^\phi & R\ar[r]^\phi&\dots}
\]
Then $\phi$ induces an automorphism $\hat{\phi}:S\to S$.
Set $R_n=R$; then $B=S\otimes A=\colim_nR_n\otimes A$ is $H'$-unital, since $R_n\otimes A$ is $H'$-unital
by hypothesis, and $H'$-unitality is preserved under filtering colimits (see Remark \ref{rem:hh'}).
Since $\hat{\phi}\otimes 1$ is an
automorphism of $B$, Step $2$ gives
\begin{align}\label{yaob}
K(B[\tp,\tm,\hat{\phi}\otimes 1])=&\cofi (1-\hat{\phi}\otimes
1:K(B)\to K(B))\\ &\oplus NK(B,\hat{\phi}\otimes 1)_+\oplus
NK(B,\hat{\phi}\otimes 1)_- \, .\nonumber
\end{align}
Because $K$-theory commutes with filtering colimits,
 we have $K(B)=(\phi\otimes 1)^{-1}K(R\otimes A)$.
 Thus by Lemma
\ref{lem:cofi},
\begin{multline}\label{1-fib}
\cofi (1-\hat{\phi}\otimes 1:K(B)\to K(B))\cong\\
\cofi (1-\phi\otimes
1 :K(R\otimes A)\to K(R\otimes A))
\end{multline}
Write $\varphi _n\colon R_n\to S$ for the canonical map of the
colimit, and put $e_n=\varphi _n(1)$.
For $n\ge 0$, there is a ring isomorphism $\psi_n\colon (R\otimes
A)[\tp,\tm,\phi\otimes 1]\to (e_n\otimes
1)B[t,t^{-1},\hat{\phi}\otimes 1](e_n\otimes 1)$, where $e_n\otimes
1\in S\otimes \tilde{A}$, such that $\psi_n (r\otimes a)=\varphi
_n(r)\otimes a$, and $\psi_n(\tp)=(e_n\otimes 1)t(e_n\otimes 1)$,
and $\psi_n(\tm)=(e_n\otimes 1)t^{-1}(e_n\otimes 1)$.

Consider the map $\eta:(R\otimes A)[\tp,\tm,\phi\otimes
1]\longrightarrow (R\otimes A)[\tp,\tm,\phi\otimes 1]$, $\eta(x)=\tp
x\tm$. There is a commutative diagram
\begin{equation}
\begin{CD}
(R\otimes A)[\tp,\tm,\phi\otimes 1] @>{\psi_n}>> (e_n\otimes
1)B[t,t^{-1},
\hat{\phi}\otimes 1](e_n\otimes 1) \\
@V{\eta}VV  @V{i}VV \\
(R\otimes A)[\tp,\tm,\phi\otimes 1] @>{\psi_{n+1}}>> (e_{n+1}\otimes
1)B[t,t^{-1},\hat{\phi}\otimes 1](e_{n+1}\otimes 1)
\end{CD}
\end{equation}
It follows that $B[\tp,\tm,\hat{\phi}\otimes 1]=\eta^{-1}(R\otimes
A)[\tp,\tm,\phi\otimes 1]$. Hence we have
$K(B[\tp,\tm,\hat{\phi}\otimes])\cong \eta^{-1}K((R\otimes
A)[\tp,\tm,\phi\otimes 1])$. But since $\tm\tp=1$, the map $\eta$
induces the identity on $K((R\otimes A)[\tp,\tm,\phi\otimes 1])$
(e.g. by \cite[2.2.6]{friendly}). Thus
\begin{equation}\label{kakb}
K((R\otimes A)[\tp,\tm,\phi\otimes 1])\cong
K(B[\tp,\tm,\hat{\phi}\otimes 1])
\end{equation}
In addition we have
\begin{equation}\label{nkcoli}
NK(R\otimes A,\phi\otimes 1)_{\pm}\cong NK(B,\hat{\phi}\otimes
1)_{\pm}.
\end{equation}
Rewrite \eqref{yaob} using \eqref{1-fib}, \eqref{nkcoli} and \eqref{kakb} to finish the third (and final) step.\\
\end{proof}

\section{Matrices associated to finite quivers}\label{sect:matrix}

Let $E$ be a finite quiver. Write $E_0$ for the set of vertices and $E_1$
for the set of arrows. In this section we assume both $E_0$ and $E_1$ are finite, of
cardinalities $e_0$ and $e_1$. If $\alpha\in E_1$, we write
$s(\alpha)$ for its source vertex and $r(\alpha)$ for its range.
There are two matrices with nonnegative integer coefficients
associated with $E$; these are best expressed in terms of the {\it range} and {\it source} maps
$r,s:E_1\to E_0$. If $f\colon E_1\to E_0$ is a map of finite sets,
and $\chi_x$, $\chi_y$ are the characteristic functions of $\{x\}$
and $\{y\}$, we write
\begin{gather*}
f^*\colon \Z^{E_0}\to \Z^{E_1},\qquad f^*(\chi_y)=\sum_{f(x)=y}\chi_x\\
f_*\colon \Z^{E_1}\to \Z^{E_0},\qquad f_*(\chi_x)=\chi_{f(x)}.
\end{gather*}
Put
\begin{equation}\label{MyN}
M_E=r^*s_*\qquad N_E'=s_*r^*
\end{equation}
We identify these homomorphisms with their matrices with respect to
the canonical basis. The matrices $M_E=[m_{\alpha,\beta}]\in
M_{e_1}\Z$ and $N_E'=[n_{i,j}]\in M_{e_0}\Z$ are given by
\begin{gather*}
m_{\alpha,\beta}=\delta_{r(\alpha),s(\beta)} \\
n_{i,j}=\#\{\alpha\in E_1:s(\alpha)=i,\quad r(\alpha)=j\}
\end{gather*}
For $i=0,1$, we consider the chain complex $C^i$ concentrated in
degrees $0$ and $1$, with $C^i_j=\Z^{e_i}$ if $j=0,1$, and with
boundary map $1-N_E'$ if $i=0$ and $1-M_E$ if $i=1$. Pictorially
\begin{gather}
C^0: \xymatrix{\Z^{E_0}\ar[r]^{1-N_E'} &\Z^{E_0}}\nonumber\\
C^1: \xymatrix{\Z^{E_1}\ar[r]_{1-M_E} &\Z^{E_1}}\label{cplx}
\end{gather}
\begin{lem}\label{lem:htpy}
The maps $r^*$ and $s_*$ induce inverse homotopy equivalences  $C^0\leftrightarrows C^1$.
\end{lem}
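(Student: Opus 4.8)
The plan is to check that $r^*$ and $s_*$ are chain maps in the two directions $C^0\to C^1$ and $C^1\to C^0$ respectively, and then to write down explicit contracting homotopies for the two composites. Since $r^*\colon\Z^{E_0}\to\Z^{E_1}$ carries the degreewise groups of $C^0$ to those of $C^1$, it is the natural candidate for a chain map $C^0\to C^1$; dually $s_*\colon\Z^{E_1}\to\Z^{E_0}$ is the candidate for $C^1\to C^0$. To see that $r^*$ commutes with the differentials I must verify $r^*(1-N_E')=(1-M_E)r^*$, equivalently $r^*N_E'=M_Er^*$. By the definitions $N_E'=s_*r^*$ and $M_E=r^*s_*$ recorded in \eqref{MyN}, both sides equal $r^*s_*r^*$, so the required square commutes. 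The symmetric computation $s_*M_E=s_*r^*s_*=N_E's_*$ shows that $s_*$ is a chain map as well.

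Next I would identify the two composites at the level of the underlying groups. On each degree one has $s_*\circ r^*=s_*r^*=N_E'$ and $r^*\circ s_*=r^*s_*=M_E$, so as chain endomorphisms $s_*r^*$ acts by $N_E'$ in both degrees of $C^0$, and $r^*s_*$ acts by $M_E$ in both degrees of $C^1$. The key observation is that the composite $N_E'$ differs from the identity by exactly the negative of the differential $1-N_E'$ of $C^0$. Consequently the identity map $h\colon C^0_0\to C^0_1$ is a contracting homotopy: with the convention $1_{C^0}-s_*r^*=\partial h+h\partial$, in degree $0$ one computes $\partial h=(1-N_E')\cdot 1=1-N_E'$, while in degree $1$ one computes $h\partial=1\cdot(1-N_E')=1-N_E'$, which in each degree is exactly $1_{C^0}-s_*r^*$. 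Hence $s_*r^*\simeq 1_{C^0}$.

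The same argument, now with the identity map $C^1_0\to C^1_1$ as homotopy and using that $1-M_E$ is the differential of $C^1$, yields $r^*s_*\simeq 1_{C^1}$. Together these show that $r^*$ and $s_*$ are mutually inverse homotopy equivalences. I do not expect a genuine obstacle here: the only points requiring care are the directions of the two chain maps and the sign in the homotopy relation, and both are forced by the fact that the composites $s_*r^*$ and $r^*s_*$ are given by the very matrices $N_E'$ and $M_E$ that define the boundary maps of $C^0$ and $C^1$.
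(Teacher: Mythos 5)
Your proof is correct and is precisely the verification the paper leaves to the reader (its proof is just ``Straightforward''): the identities $r^*N_E'=r^*s_*r^*=M_Er^*$ and $s_*M_E=s_*r^*s_*=N_E's_*$ make $r^*$ and $s_*$ chain maps, and the identity map in degree $0$ serves as the contracting homotopy since the composites $s_*r^*=N_E'$ and $r^*s_*=M_E$ differ from the identity by exactly the respective differentials. Nothing to add.
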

\begin{proof}
Straightforward.
\end{proof}

\begin{prop}\label{prop:MyN}
If $X$ is a spectrum, then $\cofi (1-M_E:X^{e_1}\to X^{e_1})\cong
\cofi (1-N_E':X^{e_0}\to X^{e_0})$.
\end{prop}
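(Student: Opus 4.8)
The plan is to realize the chain homotopy equivalence of Lemma \ref{lem:htpy} at the level of spectra. Regard the assignment $\Z^n\mapsto X^n$ (a finite wedge, equivalently product, of copies of $X$), sending an integer matrix $A$ to the induced map $A\otimes X$ of spectra, as an additive functor from the category of finitely generated free abelian groups to spectra. Applied to the two-term complexes $C^0$ and $C^1$ of \eqref{cplx}, it produces exactly the maps $1-N_E'\colon X^{e_0}\to X^{e_0}$ and $1-M_E\colon X^{e_1}\to X^{e_1}$; write $Z_0=\cofi(1-N_E'\colon X^{e_0}\to X^{e_0})$ and $Z_1=\cofi(1-M_E\colon X^{e_1}\to X^{e_1})$ for their cofibers, which are the two spectra in the statement.

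Because $r^*\colon C^0\to C^1$ and $s_*\colon C^1\to C^0$ are chain maps — this is the content of $r^*N_E'=M_Er^*$ and $s_*M_E=N_E's_*$, both immediate from $N_E'=s_*r^*$ and $M_E=r^*s_*$ — the functor carries each to a commuting square of spectra, hence to maps of cofibers $R\colon Z_0\to Z_1$ and $S\colon Z_1\to Z_0$. It remains to see that $R$ and $S$ are mutually inverse. By Lemma \ref{lem:htpy} the composite chain maps $s_*r^*=N_E'$ and $r^*s_*=M_E$ are chain homotopic to the respective identities, and the homotopies are given by integer matrices; indeed each may be taken to be multiplication by $-1$ in degree zero, since $N_E'-1=(1-N_E')(-1)$ and $M_E-1=(1-M_E)(-1)$. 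Applying the functor turns these into homotopies of maps of spectra, and provided chain-homotopic maps of two-term complexes induce homotopic maps on cofibers, we obtain $S\circ R\simeq 1$ and $R\circ S\simeq 1$, so that $R$ is an equivalence and $Z_0\cong Z_1$.

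The delicate point, which I expect to be the main obstacle, is precisely this last implication. In the bare triangulated homotopy category the cofiber of a morphism is not functorial enough for a chain homotopy to determine a homotopy of the induced maps: the map $R$ induced by $r^*$ is not canonical, and one cannot simply invoke the five lemma on the evident morphism of triangles, because $r^*\colon X^{e_0}\to X^{e_1}$ is not itself an equivalence when $e_0\ne e_1$. The clean way around this is to work in a point-set model for spectra in which the mapping cone of an arrow is a functor on the category of arrows; there the standard formula that manufactures, out of a chain homotopy between two maps of arrows, an explicit homotopy between the induced maps of mapping cones applies verbatim and yields $S\circ R\simeq 1$ and $R\circ S\simeq 1$ as genuine homotopies. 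Equivalently, one phrases the argument as the statement that the realization functor from bounded complexes of finitely generated free abelian groups with coefficients in $X$ factors through the homotopy category of such complexes, and hence sends the chain homotopy equivalence $C^0\simeq C^1$ to an equivalence $Z_0\cong Z_1$.
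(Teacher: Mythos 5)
Your reduction to Lemma \ref{lem:htpy} and your identification of the delicate point are both correct, but the fix you propose for that point does not work, so there is a genuine gap. Passing to a point-set model with functorial mapping cones does not help, because the maps you would feed into that functor are not point-set maps in the first place: $1-N_E'$, $1-M_E$ and $s_*$ are integer matrices whose entries encode sums and differences of projections, and sums and differences of maps of spectra are only defined up to homotopy --- hom-sets in a point-set category of spectra are not abelian groups; only the homotopy category is additive. Hence the squares expressing $r^*N_E'=M_Er^*$ and $s_*M_E=N_E's_*$ cannot be taken to commute strictly, and without strict commutativity the functorial mapping cone never gets off the ground (one would first need a nontrivial rectification argument). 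The second half of your fix fails for the same underlying reason: the ``standard formula'' that turns a chain homotopy into a homotopy of the induced maps on mapping cones is linear algebra --- it subtracts maps and uses the degreewise direct-sum splitting of the cone of a map of complexes --- and neither operation exists verbatim for point-set spectra. Finally, your alternative phrasing, that realization of bounded complexes of finitely generated free abelian groups with coefficients in $X$ factors through the chain homotopy category, is a true statement, but for two-term complexes it \emph{is} Proposition \ref{prop:MyN} up to notation; invoking it without an independent proof is circular.

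The paper avoids the issue rather than solving it, and this is the essential difference from your plan: it never tries to produce a two-sided inverse. One takes \emph{any} fill-in $f\colon \cofi(1-N_E')\to\cofi(1-M_E)$ for the homotopy-commutative square with vertical maps $r^*$ (such a fill-in exists by the axioms of cofiber sequences, and its non-uniqueness is harmless, since only the existence of some equivalence is claimed). The long exact sequences of the two cofibrations then give, for each $n$, a map of short exact sequences
\[
0\to H_0(C^0\otimes\pi_nX)\to \pi_n\cofi(1-N_E')\to H_1(C^0\otimes\pi_{n-1}X)\to 0
\]
to the corresponding sequence for $C^1$, in which the outer vertical maps are induced by $r^*$. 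By Lemma \ref{lem:htpy}, $r^*$ is a chain homotopy equivalence, hence remains one after tensoring with $\pi_*X$ and induces isomorphisms on $H_0$ and $H_1$; the five lemma then shows $\pi_n(f)$ is an isomorphism for all $n$, so $f$ is an equivalence. If you want to salvage your strategy of realizing the chain homotopy equivalence itself, you must either develop the coherence machinery that lets integer matrices act homotopy-coherently on finite powers of $X$, or fall back on exactly this homotopy-group argument --- at which point you have reproduced the paper's proof.
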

\begin{proof}
Note  $r^*$ induces a map
\[
\xymatrix{X^{e_0}\ar[r]^{1-N_E'}\ar[d]_{r^*}& X^{e_0}\ar[d]_{r^*}\ar[r] &\cofi(1-N_E')\ar@{.>}[d]^f\\
X^{e_1}\ar[r]^{1-M_E} &X^{e_1}\ar[r] &\cofi(1-M_E)}
\]
From the long exact sequences of homotopy groups of the fibrations above, we obtain

\[
\xymatrix{0\ar[d] & 0\ar[d]\\
H_0(C^0\otimes \pi_n(X))\ar[r]^{r^*}\ar[d]&H_0(C^1\otimes \pi_n(X))\ar[d]\\
\pi_n\cofi(1-N_E')\ar[r]^f\ar[d]& \pi_n\cofi(1-M_E)\ar[d]\\
H_1(C^0\otimes\pi_{n-1}X)\ar[r]^{r^*}\ar[d]& H_1(C^1\otimes\pi_{n-1}X)\ar[d]\\
0 & 0}
\]

By Lemma \ref{lem:htpy}, the horizontal maps at the two extremes are isomorphisms; it follows that the map
in the middle is an isomorphism too.
\end{proof}

Recall that a vertex $i\in E_0$ is called a {\it source}
(respectively, a {\it sink}) in case $r^{-1}(i)=\emptyset$
(respectively, $s^{-1}(i)=\emptyset$). We will denote by $\Si (E)$
the sets of sinks of $E$.

\section{$K$-theory of the Leavitt algebra I: finite quivers without sinks}\label{sect:skew}

Let $E$ be a finite quiver and $M=M_E$.
The {\it path ring of $E$} is the ring $P=P_\Z(E)$ with one
generator for each arrow $\alpha\in E_1$ and one generator $p_i$ for
each vertex $i\in E_0$, subject to the following relations
\begin{gather}
                      p_ip_j=\delta_{i,j}p_i\, , \qquad (i,j\in E_0)\label{piortog}\\
                      p_{s(\alpha)}\alpha =\alpha =\alpha p_{r(\alpha )}\, ,\qquad (\alpha\in E_1)\label{alpharel}
\end{gather}

The ring $P$ has a basis formed by the $p_i$, the $\alpha$, and
the products $\alpha_1\cdots\alpha_n$ with
$r(\alpha_i)=s(\alpha_{i+1})$. We think of these as paths in the
quiver, of lengths, $0$, $1$ and $n$, respectively. Observe that $P$
is unital, with $1=\sum _{i\in E_0}p_i$.

Consider the {\it opposite quiver} $E^*$; this is the quiver with the same sets
of vertices and arrows, but with the range and source functions switched.
Thus $E_i^*=E_i$ $(i=0,1)$ and if we write $\alpha^*$ for
the arrow $\alpha\in E_1$ considered as an arrow
 of $E^*$, we have $r(\alpha^*)=s(\alpha)$ and $s(\alpha^*)=r(\alpha)$. The path
ring $P^*=P(E^*)$ is generated by the $p_i$ $(i\in E_0)$
 and the $\alpha^*\in E^*_1$; the relation \eqref{piortog} is satisfied, and we also have
 \begin{equation}\label{alpha^*rel}
p_{r(\alpha)}\alpha ^*=\alpha^* =\alpha^*p_{s(\alpha)}\, , \qquad
(\alpha\in E_1).
 \end{equation}

The {\it Leavitt path ring of $E$} is the ring $L=L_\Z(E)$
on generators $p_i$ $(i\in E_0)$, $\alpha\in E_1$, and $\alpha^*\in
E^*_1$, subject to relations  \eqref{piortog}, \eqref{alpharel}, and
\eqref{alpha^*rel}, and to the following two additional relations
\begin{gather}
\alpha^*\beta=\delta_{\alpha,\beta}p_{r(\alpha)}\label{alpha^*alpha}\\
p_i=\sum_{s(\alpha)=i}\alpha\alpha^* \qquad (i\in E_0\setminus \Si
(E)) \label{alphaalpha^*}
\end{gather}
From these last two relations we obtain
\begin{align}
\alpha^*\alpha&=\sum_{s(\beta)=r(\alpha)}\beta\beta^*\nonumber\\
&=\sum_{\beta\in E_1}m_{\beta,\alpha}\beta\beta^*\label{ck}
\end{align}
It also follows, in case $E$ has no sinks, that the
$q_\beta=\beta\beta^*$ are a complete system of orthogonal
idempotents; we have
\begin{equation}\label{qbeta}
\sum_{\beta\in E_1}q_\beta=1,\qquad q_\alpha q_\beta=\delta_{\alpha,\beta}q_{\beta}
\end{equation}

The ring $L$ is equipped with an involution and a $\Z$-grading.
The involution $x\mapsto x^*$ sends $\alpha\mapsto \alpha^*$ and
$\alpha^*\mapsto \alpha$. The grading is determined by $|\alpha|=1$,
$|\alpha^*|=-1$. By \cite[proof of Theorem 5.3]{AMP}, we have
$L_0=\bigcup _{n=0}^{\infty} L_{0,n}$, where $L_{0,n}$ is the linear
span of all the elements of the form $\gamma \nu^*$, where $\gamma$
and $\nu$ are paths with $r(\gamma)=r(\nu)$ and $|\gamma |=|\nu
|=n$. For each $i$ in $E^0$, and each $n\in \Z^+$, let us denote by
$P(n,i)$ the set of paths $\gamma$ in $E$ such that $|\gamma |=n$
and $r(\gamma)=i$.  The ring $L_{0,0}$ is isomorphic to $\prod
_{i\in E^0}k$. In general the ring $L_{0,n}$ is isomorphic to
$$\big[ \prod _{m=0}^{n-1}\big( \prod _{i\in \Si (E)}M_{|P(m,i)|}(\Z)\big)\big]
\times \big[ \prod _{i\in E_0}M_{|P(n,i)|}(\Z) \big] . $$ The
transition homomorphism $L_{0,n}\to L_{0,n+1}$ is the identity on
the factors $\prod _{i\in \Si (E)}M_{|P(m,i)|}(\Z)$, for $0\le m\le
n-1$, and also on the factor $\prod _{i\in \Si (E)}M_{|P(n,i)|}(\Z)$
of the last term of the displayed formula. The transition
homomorphism
$$\prod_{i\in E_0\setminus \Si (E)}M_{|P(n,i)|}(\Z)\to \prod_ {i\in
E_0}M_{|P(n+1,i)|}(\Z)$$ is a block diagonal map induced by the
following identification in $L(E)_0$: A matrix unit in a factor
$M_{|P(n,i)|}(\Z)$, where $i\in E_0\setminus \Si (E)$, is a monomial
of the form $\gamma\nu ^*$, where $\gamma$ and $\nu$ are paths of
length $n$ with $r(\gamma)=r(\nu)=i$. Since $i$ is not a sink, we
can enlarge the paths $\gamma$ and $\nu$ using the edges that $i$
emits, obtaining paths of length $n+1$, and relation
(\ref{alphaalpha^*}) in the definition of $L(E)$ gives
\[
\gamma \nu^*=\sum _{\{\alpha\in E_1\mid s(\alpha )=i\}}(\gamma
\alpha)(\nu\alpha)^*.
\]
Assume $E$ has no sources. For each $i\in E_0$, choose an arrow
$\alpha_i$ such that $r(\alpha_i)=i$. Consider the elements
\[
t_+=\sum_{i\in E_0}\alpha_i,\qquad t_-=t_+^*
\]
One checks that $t_-t_+=1$. Thus, since $|t_\pm|=\pm 1$, the endomorphism
\[
\phi:L\to L, \qquad \phi(x)=t_+xt_-
\]
is homogeneous of degree $0$ with respect to the $\Z$-grading. In particular it restricts to an endomorphism of $L_0$.
By \cite[Lemma 2.4]{skew}, we have
\begin{equation}\label{skewle}
L=L_0[t_+,t_-,\phi].
\end{equation}

For a unital ring $A$, we may define the Leavitt path $A$-algebra
$L_A(E)$ in the same way as before, with the proviso that elements
of $A$ commute with the generators $p_i$, $\alpha $, $\alpha ^*$.
Observe that \begin{equation} \label{tensor-A} L_A(E)=
L_{\Z}(E)\otimes A.
\end{equation}
If $A$ is a not necessarily unital ring, we take \eqref{tensor-A} as the definition of $L_A(E)$.
We may think of $L_{\Z}(E)$ as the most basic Leavitt path ring.

Let $e_0'=|\Si (E)|$. We assume that $E_0$ is ordered so that the
first $e_0'$ elements of $E_0$ correspond to its sinks. Accordingly,
the first $e_0'$ rows of the matrix $N'_E$ are $0$. Let $N_E$ be the
matrix obtained by deleting these $e_0'$ rows. The matrix that
enters the computation of the $K$-theory of the Leavitt path algebra
is
$$\begin{pmatrix}
0 \\1_{e_0-e_0'}
\end{pmatrix}-N_E^t\colon \Z^{e_0-e_0'}\longrightarrow \Z^{e_0}.$$
By a slight abuse of notation, we will write $1-N_E^t$ for this
matrix. Note that $1-N_E^t\in M_{e_0\times (e_0-e_0')}(\Z)$. Of
course $N_E=N_E'$ in case $E$ has no sinks, where $N_E'$ is
introduced in Section \ref{sect:matrix}.

\begin{thm}\label{thm:skewle}
Let $A$ be an $H'$-unital ring, $E$ a finite quiver, $M=M_E$ and $N=N_E$. Assume the quiver $E$ has no sources.
We have
\begin{align*}
K(L_{A} (E))  \cong  & NK(L_0\otimes A,\phi\otimes
1)_+\oplus NK(L_0\otimes A,\phi\otimes 1)_- \\ &\oplus
\cofi(K(A)^{e_0-e_0'}\overset{1-N^t}\longrightarrow K(A)^{e_0}).
\end{align*}
Moreover, if in addition $E$ has no sinks then
\begin{align*}
K(L_{A}(E)) \cong & NK(L_0\otimes A,\phi\otimes
1)_+\oplus NK(L_0\otimes A,\phi\otimes 1)_- \\  & \oplus
\cofi(K(A)^{e_1}\overset{1-M^t}\longrightarrow K(A)^{e_1}).
\end{align*}

\end{thm}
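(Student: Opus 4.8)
The plan is to deduce the statement from the skew-Laurent computation of Theorem \ref{thm:skewyao} and then identify the resulting cofiber term with $\cofi(1-N^t)$. Since $E$ has no sources, the elements $t_\pm$ and the corner isomorphism $\phi\colon L_0\to pL_0p$ (with $p=t_+t_-$) are defined, and \eqref{skewle} gives $L_\Z(E)=L_0[\tp,\tm,\phi]$, so that $L_A(E)=L_\Z(E)\otimes A=(L_0\otimes A)[\tp,\tm,\phi\otimes 1]$. The ring $L_0$ is unital, and $L_0\otimes A=\colim_\ell (L_{0,\ell}\otimes A)$ is a filtering colimit of finite products of matrix algebras $M_k(A)$; by Remark \ref{rem:hh'} each such product is $H'$-unital, and $H'$-unitality passes to filtering colimits, so $L_0\otimes A$ is $H'$-unital. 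Thus Theorem \ref{thm:skewyao} applies with $R=L_0$, producing the two $NK$-summands together with $\cofi(1-\phi\otimes 1\colon K(L_0\otimes A)\to K(L_0\otimes A))$, and it remains to identify this last term.

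Next I would compute $K(L_0\otimes A)$ together with its self-map $\phi\otimes 1$. Using that $K$ commutes with filtering colimits, sends finite products to products, and is Morita invariant (Lemma \ref{lem:morita}, applied to the full corner $\Z=e_{11}M_k\Z e_{11}$), I get $K(L_0\otimes A)\simeq\hoco_\ell X_\ell$ with $X_\ell\simeq K(A)^{(\text{factors of }L_{0,\ell})}$. Two facts drive everything. On the matrix-factor indexed by a non-sink vertex $i$, the Bratteli transition $\tau_\ell\colon X_\ell\to X_{\ell+1}$ is the multiplicity-$n_{ij}$ block-diagonal map into the factor of $j$ (so $\tau$ is governed by $N'^t_E$), while it is the identity on the frozen factors attached to sinks. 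And $\phi(\gamma\nu^*)=(\alpha_{s(\gamma)}\gamma)(\alpha_{s(\nu)}\nu)^*$ raises length by one while preserving the range vertex, so the map $\phi_\ell\colon X_\ell\to X_{\ell+1}$ is a vertex-preserving level shift inducing the identity on the $K$-theory of each factor (a corner inclusion); here ``no sources'' is exactly what guarantees the arrow $\alpha_{s(\gamma)}$.

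The crux is to show $\cofi(1-\phi)\simeq\cofi(1-N^t\colon K(A)^{e_0-e_0'}\to K(A)^{e_0})$, and this is where I expect the real work, because of the accumulating frozen sink-factors. In the no-sinks case it is clean: every $X_\ell\simeq K(A)^{e_0}$, the transition is $\psi=N'^t$, and $\phi$ corresponds to the telescope shift $\hat\psi^{-1}$; since $1-\hat\psi^{-1}=-\hat\psi^{-1}(1-\hat\psi)$ with $\hat\psi$ an equivalence, Lemma \ref{lem:cofi} gives $\cofi(1-\phi)\simeq\cofi(1-N'^t\colon K(A)^{e_0}\to K(A)^{e_0})$. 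For the general case I would avoid the growing telescope terms by a total-cofiber manoeuvre: present $K(L_0\otimes A)$ by its telescope cofiber sequence on $\bigoplus_\ell X_\ell$, note that the shift maps $T_\tau$ and $T_\phi$ commute (as $\phi$ commutes with the structure inclusions), so that $1-\phi$ is the induced map of a commuting square and its cofiber equals the total cofiber. Computing the total cofiber in the other order rewrites it as $\cofi(1-\bar\tau\colon Z\to Z)$ with $Z=\hoco_\ell(X_\bullet,\phi)$ the $\phi$-telescope, on which the transitions are the identity on the non-sink part—collapsing it to $K(A)^{(E_0\setminus\Si(E))}$—and the depth shift on the sink history. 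There $\bar\tau$ is block lower-triangular, with diagonal blocks $N_{V'}^t$ (non-sink to non-sink) and the depth shift, and off-diagonal block $N_S^t$ (non-sink to sink). Since the depth-shift telescope satisfies $\cofi(1-\mathrm{shift})\simeq K(A)^{(\Si(E))}$, this block form exhibits $\cofi(1-\bar\tau)$ as the cofiber of $K(A)^{(E_0\setminus\Si(E))}\xrightarrow{\binom{1-N_{V'}^t}{-N_S^t}}K(A)^{(E_0\setminus\Si(E))}\oplus K(A)^{(\Si(E))}$, which is precisely $\cofi(1-N^t)$ because $1-N^t=\binom{0}{1_{e_0-e_0'}}-N_E^t$ has exactly these blocks; the hard point is to carry out this identification honestly at the spectrum level rather than only on $\ker$ and $\coker$.

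Finally, for the ``moreover'' I would specialize: with no sinks $N=N'_E$ is square and the first part gives $\cofi(1-N'^t\colon K(A)^{e_0}\to K(A)^{e_0})$; applying Proposition \ref{prop:MyN} to the opposite quiver $E^*$, for which $M_{E^*}=M_E^t$ and $N'_{E^*}=N_E'^t$, converts this into $\cofi(1-M^t\colon K(A)^{e_1}\to K(A)^{e_1})$, as required.
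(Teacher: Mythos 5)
Your proposal is correct, and it follows the paper's own skeleton up to the decisive step: like the paper, you reduce via \eqref{skewle} and the $H'$-unitality of $L_0\otimes A$ to Theorem \ref{thm:skewyao}, and you handle the ``moreover'' by applying Proposition \ref{prop:MyN} to $E^*$ (the paper does this at the start, you at the end; immaterial). Where you genuinely diverge is in identifying $\cofi\bigl(1-\phi\otimes 1\colon K(L_0\otimes A)\to K(L_0\otimes A)\bigr)$ with $\cofi(1-N^t)$. The paper never leaves the finite stages of the Bratteli system: it writes $1-\phi\otimes 1$ as the colimit, along the transitions $\Delta_n$, of the index-shifting maps $\Delta_n-\Omega_n\colon K(L_{0,n}\otimes A)\to K(L_{0,n+1}\otimes A)$, and then produces an elementary integer matrix $h$ with $h(\Delta_n-\Omega_n)=1_{(n+1)e_0'}\oplus(N^t-1)$ which restricts to the identity on the summand $0\oplus K(A)^{e_0}$; this identifies every $\cofi(\Delta_n-\Omega_n)$ with $\cofi(1-N^t)$ compatibly with the colimit, so the sink bookkeeping is absorbed into finite matrix algebra and elementary row operations. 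You instead run a total-cofiber (Fubini) argument on the commuting square of telescope presentations, trading the transition telescope for the $\phi$-telescope $Z$ with its infinite sink history, and then use that the cofiber of $1-\mathrm{shift}$ on that history is a single copy of $K(A)^{(\Si(E))}$ (it is exactly the telescope presentation of the constant system with identity maps), so that the lower-triangular form of the induced transition yields $\cofi(1-N^t)$; your separate no-sinks shortcut (the shift is $\hat\psi^{-1}$, so $\cofi(1-\hat\psi^{-1})\cong\cofi(1-\hat\psi)$ and Lemma \ref{lem:cofi} finishes) is also correct and is simpler than either general argument. As to what each buys: the paper's route is more elementary and its compatibility checks are transparent, while yours is more conceptual, explaining where each block of $1-N^t$ comes from; its cost is precisely the coherence issue you flag, which can be closed by mapping the two-term complex $K(A)^{e_0-e_0'}\to K(A)^{e_0-e_0'}\oplus K(A)^{(\Si(E))}$ into $(Z\xrightarrow{1-\bar\tau}Z)$ via the identity on the non-sink part and the depth-zero inclusion on the sink part, and then applying the five lemma to the resulting map of cofiber sequences.
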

\begin{proof}
If $E$ has no sinks, then Proposition \ref{prop:MyN} applied to
$E^*$ gives
\[
\cofi(K(A)^{e_1}\overset{1-M^t}\longrightarrow K(A)^{e_1})\cong
\cofi(K(A)^{e_0}\overset{1-N^t}\longrightarrow K(A)^{e_0})
\]
Thus it suffices to prove the first equivalence of the theorem. By \eqref{skewle},
\[
L_{A}(E)=(L_0\otimes A)[t_+,t_-, 1\otimes\phi]
\]
Note $L_0\otimes A$ is a filtering colimit of rings of matrices with coefficients in $A$. Since $A$ is
$H'$-unital by hypothesis, each such matrix ring is $H'$-unital, whence $L_0\otimes A$ is $H'$-unital.
Hence, by Theorem \ref{thm:skewyao} \begin{align*}
K(L_{A}(E))\cong & NK(L_0\otimes A,\phi\otimes 1)_+\oplus
NK(L_0\otimes A,\phi\otimes 1)_- \\ & \oplus \cofi(K(L_0\otimes A)
\overset{1-\phi\otimes 1} \longrightarrow K(L_0\otimes A))
\end{align*}
As explained in the paragraph immediately above the theorem, we have
$L_0=\bigcup_{n=0}^\infty L_{0,n}$. Since $E$ has no sources, it
follows that $L_{0,n}$ is the product of exactly
$ne_0'+e_0=(n+1)e_0'+(e_0-e_0')$ matrix algebras; thus $K(A\otimes
L_{0,n})\cong K(A)^{(n+1)e_0'+(e_0-e_0')}$, since $A$ is $H'$-unital
and $K$-theory is matrix stable on $H'$-unital rings (by Theorem \ref{thm:excisus}).
Moreover the inclusion $L_{0,n}\subset L_{0,n+1}$ induces
$$\Delta _n:=\begin{pmatrix}  1_{(n+1)e_0'} & 0 \\ 0 & N^t
\end{pmatrix}\colon K(A)^{(n+1)e_0'+(e_0-e_0')}\longrightarrow
K(A)^{(n+1)e_0'+e_0}.$$ Now, for a path $\gamma$ on $E$, we have
$$\phi(\gamma \gamma^*)=\sum _{i,j}\alpha _i\gamma \gamma ^*\alpha
_j^*= (\alpha _{s(\gamma)}\gamma)(\alpha _{s(\gamma)}\gamma)^* ,$$
so that $\phi\otimes 1$ induces
$$\Omega _n:=\begin{pmatrix}
0 \\ 1_{ne_0'+e_0}
\end{pmatrix}\colon K(A)^{ne_0'+e_0}=K(L_{0,n}\otimes A)\longrightarrow K(A)^{(n+1)e_0'+e_0}.$$
Summing up, we get a commutative diagram
\begin{equation}
\begin{CD}
K(L_{0,n}\otimes A) @>{\Delta _n}>> K(L_{0,n+1}\otimes A) @>>>
\cdots  @>>> K(L_0\otimes A)\\
@V{\Delta _n -\Omega _n}VV @VV{\Delta _{n+1}-\Omega _{n+1}}V & &
@VV{1-\phi \otimes 1}V\\
K({L_{0,n+1}}\otimes A) @>{\Delta _{n+1}}>> K(L_{0,n+2}\otimes A)
@>>> \cdots @>>> K(L_0\otimes A)
\end{CD}
\end{equation}
Note that elementary row operations take $\Delta_n-\Omega_n$ to $1_{(n+1)e'_0}\oplus (N^t-1)$; hence there is
an elementary matrix $h$ such that $h(\Delta_n-\Omega_n)=1_{(n+1)e'_0}\oplus (N^t-1)$. Moreover one checks
that $h$ restricts to the identity on $0\oplus K(A)^{e_0}\subset K(A)^{(n+1)e'_0+e_0}$. It follows that
the inclusion $i_{n+1}:K(A)^{e_0}\to 0\oplus K(A)^{e_0}\subset K(A)^{(n+1)e'_0+e_0}$ induces an equivalence
\begin{multline*}
C:=\cofi(K(A)^{e_0-e_0'}\overset{1-N^t}\longrightarrow K(A)^{e_0})\\\cong
\cofi(K(L_{0,n}\otimes A)\overset{\Delta _n-\Omega
_n}\longrightarrow K(L_{0,n}\otimes A)),
\end{multline*}
and that furthermore, the diagram
\[
\xymatrix{K(L_{0,n}\otimes A)\ar[rr]^{\Delta _n-\Omega_n}\ar[d]_{\Omega_n}&& K(L_{0,n+1}\otimes A)\ar[d]_{\Omega_{n+1}}\ar[r] &C\ar@{=}[d]\\
K(L_{0,{n+1}}\otimes A)\ar[rr]^{\Delta _{n+1}-\Omega_{n+1}}&& K(L_{0,n+2}\otimes A)\ar[r] &C}
\]
is homotopy commutative. Hence
\[
\xymatrix{K(L_{0,n}\otimes A)\ar[rr]^{\Delta _n-\Omega_n}\ar[d]_{\Delta_n}&& K(L_{0,n+1}\otimes A)\ar[d]_{\Delta_{n+1}}\ar[r] &C\ar@{=}[d]\\
K(L_{0,{n+1}}\otimes A)\ar[rr]^{\Delta _{n+1}-\Omega_{n+1}}&& K(L_{0,n+2}\otimes A)\ar[r] &C}
\]
is homotopy commutative too. Thus $\cofi(1-1\otimes \phi:K(L_0\otimes A)\to K(L_0\otimes A))\cong C$.
\end{proof}

\section{$K$-theory of Leavitt algebras II: row-finite quivers}\label{sect:row-finite}

A quiver $E$ is said to be {\it row-finite} if for each $i\in
E_0$, the set $s^{-1}(i)=\{ \alpha \in E_1\mid s(\alpha)=i \}$ is
finite. This is equivalent to saying that the adjacency matrix
$N_E'$ of $E$ is a row-finite matrix. For a row-finite quiver $E$,
the Leavitt path algebras $L_\Z(E)$ and $L_A(E)$ are defined exactly
as in the case of a finite quiver.

Recall that a {\it complete subgraph} of a quiver $E$ is a subquiver
$F$ such that for every $v\in F_0$ either $s_F^{-1}(v)=\emptyset$ or
$s_F^{-1}(v)=s_E^{-1}(v)$. If $F$ is a complete subgraph of $E$,
then there is a natural homomorphism $L_A(F)\to L_A(E)$ (see
\cite[Lemma 3.2]{AMP}).

\begin{lem}\label{addingv}
Let $E$ be a finite quiver and let $F$ be a
subquiver of $E$ with $d=|F|$ and $d'=|\Si (F)|$. Let $A$ be a
unital ring. Suppose there is a vertex $v\in E_0\setminus F_0$ such
that $s_E^{-1}(v)\ne \emptyset$ and $r_E(s_E^{-1}(v))\subseteq F_0$.
Consider the subquiver $F'$ of $E$ with $F'_0=F_0\cup \{ v\}$,
$F'_1=F_1\cup s_E^{-1}(v)$. Then the following properties hold:
\begin{enumerate}
\item $L_A(F)$ is a full corner in $L_A(F')$. In particular $L_A(F)$ and $L_A(F')$ are Morita equivalent.
\item
$\cofi (1-N_F^t:K(A)^{d-d'}\to K(A)^{d}) \cong\\
 \cofi (1-N_{F'}^t:K(A)^{d+1-d'}\to K(A)^{d+1}).$
\end{enumerate}
\end{lem}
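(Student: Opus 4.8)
The statement has two parts, and I would attack them in sequence since part (2) should follow from part (1) together with results already established. For part (1), the key structural fact I want to exploit is that $F'$ is obtained from $F$ by adjoining a single new vertex $v$ whose emitted edges all range into $F_0$. Crucially, $v$ is \emph{not} a sink in $F'$ (since $s_E^{-1}(v)\neq\emptyset$), and no existing vertex of $F$ gains new emitted edges, so $F$ sits inside $F'$ as a complete subgraph. This gives the natural homomorphism $L_A(F)\to L_A(F')$ cited from \cite[Lemma 3.2]{AMP}.

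\textbf{Part (1): identifying the full corner.} The plan is to exhibit an idempotent $e\in L_A(F')$ such that $eL_A(F')e\cong L_A(F)$ and $e$ is full. The natural candidate is $e=\sum_{i\in F_0}p_i$, i.e.\ the sum of the vertex idempotents coming from $F$ (omitting $p_v$). First I would check that $eL_A(F')e$ is generated exactly by the $p_i$, $\alpha$, $\alpha^*$ for $i\in F_0$ and $\alpha\in F_1$: the point is that any generator involving $v$ is killed on at least one side by $e$, since $p_v e=0$, while the edges in $s_E^{-1}(v)$ have source $v$ and range in $F_0$, so they too are annihilated when sandwiched by $e$. This identifies $eL_A(F')e$ with $L_A(F)$ via the structural homomorphism. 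The harder check is \emph{fullness}: I must show $L_A(F')\,e\,L_A(F')=L_A(F')$, equivalently that $p_v$ lies in the ideal generated by $e$. This is exactly where the hypothesis $s_E^{-1}(v)\neq\emptyset$ is used: picking any $\alpha$ with $s(\alpha)=v$, relation \eqref{alphaalpha^*} (or rather the presence of $\alpha\alpha^*$ with $r(\alpha)\in F_0$) lets me write $p_v=\sum_{s(\alpha)=v}\alpha\alpha^*$, and each $\alpha\alpha^*=\alpha\,p_{r(\alpha)}\,\alpha^*$ with $r(\alpha)\in F_0$ factors through $e$. Morita equivalence then follows formally from fullness.

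\textbf{Part (2): the cofiber identification.} For the $K$-theory statement I would argue that adjoining $v$ changes the matrix $1-N^t$ in a controlled way. Passing from $F$ to $F'$ adds one new vertex (so the codomain $\Z^{d}\to\Z^{d+1}$ gains a coordinate) and, since $v$ is not a sink, one new column in $N^t$ (so the domain gains a coordinate too, $\Z^{d-d'}\to\Z^{d+1-d'}$); note $d'=|\Si(F)|=|\Si(F')|$ because $v$ is not a sink and no sink status of old vertices changes. The new row of $N_{F'}^t$ (indexed by $v$) records the edges $v$ emits into $F_0$, and the new column (indexed by $v$ as a non-sink source) has a $1$ in the $v$-diagonal position of the identity part and zeros elsewhere in $N_{F'}^t$, since nothing ranges into $v$. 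The effect on $1-N_{F'}^t$ is therefore to add a new basis vector in the domain mapping isomorphically (via the identity block) onto the new $v$-coordinate in the codomain, up to the entries coming from $N^t$. A short elementary-row/column-operation argument — exactly of the kind used in the proof of Theorem \ref{thm:skewle} to normalize $\Delta_n-\Omega_n$ — shows this enlargement is a split acyclic extension of the original map $1-N_F^t$, so the cofibers agree. Alternatively, and more cleanly, part (2) follows from part (1) via Theorem \ref{thm:skewle}: Morita-equivalent rings have equivalent $K$-theory, and Theorem \ref{thm:skewle} expresses $K(L_A(F))$ and $K(L_A(F'))$ in terms of the respective cofibers plus identical $NK$-summands, forcing the cofibers to be equivalent.

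\textbf{Main obstacle.} I expect the delicate point to be the fullness verification in part (1) and, correspondingly, the careful bookkeeping of how the matrix $1-N^t$ transforms under adjoining $v$ in part (2) — specifically confirming that the newly added column sits in the identity block (reflecting that $v$ is a non-sink source with nothing ranging into it) so that the matrix extension is genuinely elementary. Once the block structure of $1-N_{F'}^t$ relative to $1-N_F^t$ is written out explicitly, the cofiber equivalence is a routine consequence, so I would invest the effort in getting the idempotent $e$ and its fullness exactly right.
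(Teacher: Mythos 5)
Your proposal is correct and follows essentially the same route as the paper's proof: the same corner idempotent $p=\sum_{i\in F_0}p_i$, with fullness coming from $p_v=\sum_{s(\alpha)=v}\alpha\alpha^*$ where each $r(\alpha)\in F_0$, and for (2) the same observation that $1-N_{F'}^t$ is block-triangular with a $1$ in the new diagonal slot, hence reducible by elementary transformations to the block-diagonal sum of $1-N_F^t$ and a $1\times 1$ identity, which does not change the hocofiber. Two minor caveats: your row/column bookkeeping for $N_{F'}^t$ is transposed (since nothing ranges into $v$, it is the $v$-\emph{row} of $N_{F'}^t$ that vanishes, while the $v$-\emph{column} records the edges $v$ emits; the block-triangular shape and the conclusion are unaffected), and your ``cleaner alternative'' for (2) via Theorem \ref{thm:skewle} would not actually work, both because $v$ is a source of $F'$ so that theorem (which requires no sources) does not apply, and because equivalence of the two $K$-theory spectra plus matching $NK$-summands does not by itself allow cancellation of the summands.
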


\begin{proof}
\noindent
\begin{enumerate}
\item Set $p=\sum _{i\in F_0}p_i\in L_A(F')$. It is easily seen that
$L_A(F)\cong pL_A(F')p$. Since $p$ is a full idempotent in $L_A(F')$, this proves (1).

\item Recall that we write $1-N_F^t$ for the $d\times (d-d')$-matrix $\begin{pmatrix} 0
\\ 1_{d-d'} \end{pmatrix}-N_F^t$. Note
that $v$ is a source in $F'$, so for every $j\in F'_0$ we have
$n^{F'}_{jv}=0$. The matrices
$$\begin{pmatrix} 0 \\ 1_{d+1-d'}   \end{pmatrix} - N_{F'}^t \quad
\text{and} \quad \begin{pmatrix} \begin{pmatrix} 0 \\ 1_{d-d'} \end{pmatrix}- N_F^t & 0 \\
0 & 1
\end{pmatrix} $$
are clearly equivalent by elementary transformations, from which the
result follows.
\end{enumerate}
\end{proof}

For a path $\gamma \in E_n$, with $n\ge 1$, we denote by $v(\gamma
)$ the set of all vertices appearing as range or source vertices of
the arrows of $\gamma $. If $i\in E_0$ is a trivial path, we set
$v(i)=\{ i \}$. Write $L_E=\{\gamma \in E_*\mid |v(\gamma )|=|\gamma
|+1 \}$, the set of paths without repetitions of vertices. Denote by
$r_{E_*}$ and $s_{E_*}$ the extensions of $r_E$ and $s_E$
respectively to the set of all paths in $E$.

Given a quiver with oriented cycles, we define a subquiver
$\tilde{E}$ of $E$ by setting $\tilde{E}_0=\{ i\in E_0\mid
r_{E_*}(i)\nsubseteq L_E \}$ and $\tilde{E}_1=\{ \alpha\in E_1\mid
s_E(\alpha )\in \tilde{E}_0 \}$. Observe that this is a well-defined
quiver because, if $s_E(\alpha )\in \tilde{E}_0$, then $r_E(\alpha
)\in \tilde{E}_0$ as well. If $E$ does not have oriented cycles,
then we define $\tilde{E}$ as the empty quiver.

\begin{lem}
\label{Ewithoutso}
 Let $E$ be a quiver. Then $\tilde{E}$ is a complete
subgraph of $E$ without sources, and if $\gamma \in E_*$ is a
non-trivial closed path then $\gamma \in \tilde{E}_*$.
\end{lem}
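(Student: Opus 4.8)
The plan is to establish the three assertions---that $\tilde{E}$ is a complete subgraph, that it has no sources, and that it contains every non-trivial closed path---after first recasting membership in $\tilde{E}_0$ combinatorially. Unwinding the definition, $i\in\tilde{E}_0$ exactly when some path $\delta$ with $r_{E_*}(\delta)=i$ lies outside $L_E$, i.e.\ when there is a path \emph{ending at $i$ that repeats a vertex}. If $\delta=\alpha_1\cdots\alpha_n$ has vertices $v_0,\dots,v_n=i$ with $v_a=v_b$ ($a<b$), then $\alpha_{a+1}\cdots\alpha_b$ is a non-trivial closed path through $v_b$ and $\alpha_{b+1}\cdots\alpha_n$ runs from $v_b$ to $i$; conversely, prepending a loop based at a vertex of a closed path to any path terminating at $i$ produces a repeating path ending at $i$. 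So the first step is to record the characterization: \emph{$i\in\tilde{E}_0$ iff $i$ is reachable by a (possibly trivial) path from a vertex lying on a non-trivial closed path.} Well-definedness of the subquiver $\tilde{E}$ is the observation preceding the statement: if $s_E(\alpha)\in\tilde{E}_0$, witnessed by $\delta$ ending at $s_E(\alpha)$ with a repeat, then $\delta\alpha$ ends at $r_E(\alpha)$ and still repeats, so $r_E(\alpha)\in\tilde{E}_0$.

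Completeness is then immediate. Since $\tilde{E}_1=\{\alpha\in E_1:s_E(\alpha)\in\tilde{E}_0\}$, for any $v\in\tilde{E}_0$ every arrow of $E$ emitted by $v$ lies in $\tilde{E}_1$, whence $s_{\tilde{E}}^{-1}(v)=s_E^{-1}(v)$; this is precisely the defining condition for a complete subgraph.

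For the absence of sources, fix $v\in\tilde{E}_0$ and use the characterization to pick a non-trivial closed path $c$ together with a path $p$ from a vertex of $c$ to $v$. If $p$ is trivial, $v$ lies on $c$ and the arrow $\alpha$ of $c$ with $r_E(\alpha)=v$ has its source on $c$, hence in $\tilde{E}_0$. If $p$ is non-trivial, its final arrow $\alpha$ satisfies $r_E(\alpha)=v$ and has source equal to the penultimate vertex of $p$, which is still reachable from $c$ and so lies in $\tilde{E}_0$. In either case $\alpha\in\tilde{E}_1$ ends at $v$, so $v$ is not a source. (Concretely, starting from a repeating path $\delta=\alpha_1\cdots\alpha_n$ ending at $v$, one shows $s_E(\alpha_n)\in\tilde{E}_0$ by truncating $\delta$ when the repetition lies strictly before $v$, and, when the repetition involves the terminal vertex, by traversing the resulting cycle once more and stopping one arrow early to land on $s_E(\alpha_n)$.)

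Finally, for a non-trivial closed path $\gamma$ and a vertex $i\in v(\gamma)$, split $\gamma=\gamma_1\gamma_2$ at an occurrence of $i$; then $\gamma_1\gamma_2\gamma_1$ ends at $i$ and repeats $s_{E_*}(\gamma)$, so it lies outside $L_E$ and $i\in\tilde{E}_0$. Hence every arrow $\alpha$ of $\gamma$ has $s_E(\alpha)\in v(\gamma)\subseteq\tilde{E}_0$, so $\alpha\in\tilde{E}_1$ and $\gamma\in\tilde{E}_*$. The delicate point, and where I expect the real work to lie, is the source-free claim in the case that the only incoming arrow at $v$ closes a cycle: the rotation argument above---reusing the cycle to shift the repetition off the terminal vertex---is what makes $s_E(\alpha)\in\tilde{E}_0$ rigorous.
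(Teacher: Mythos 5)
Your proof is correct and follows essentially the same route as the paper: extract a non-trivial closed path from a vertex repetition, use the forward-closure of $\tilde{E}_0$ (and rotation of cycles) to see that the last arrow into any $v\in\tilde{E}_0$ has source in $\tilde{E}_0$, and note completeness holds by the definition of $\tilde{E}_1$. Your explicit reachability characterization of $\tilde{E}_0$ and the rotation argument simply spell out the steps the paper leaves terse (indeed, the paper's "and so $v\in\tilde{E}_0$" is a misstatement of exactly the conclusion you prove carefully).
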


\begin{proof}
The result is clear in case $E$ does not have oriented cycles.
Suppose that $E$ has oriented cycles. By definition, $\tilde{E}$ is
a complete subgraph of $E$. Observe that if $i\in \tilde{E}_0$ then
$s_{E_*}^{-1}(i)\subseteq \tilde{E}_*$. Now if $\gamma \in E_*$ is a
non-trivial closed path we have $s(\gamma) = r(\gamma )\in \tilde{E}
_0$ and so $\gamma \in \tilde{E}_*$.

Pick $v\in \tilde{E}_0$. By construction there is $\gamma =\alpha _1
\cdots \alpha _m\in r_{E_*}(v)$ such that $|v(\gamma )|\le m$. Hence
there exists an index $i$ such that there is a non-trivial closed
path based on  $r_E(\alpha _i)$. Then $r_E(\alpha _i)\in
\tilde{E}_0$ and so $v\in \tilde{E}_0$. Therefore $\tilde{E}$ has no
sources.
\end{proof}

We are now ready to obtain our main general result for a row-finite quiver.

\begin{thm}\label{row-finitecase}
Let $A$ be either a ring with local units or an $H'$-unital ring
which is torsion free as a $\Z$-module, and let $E$ be a row-finite
quiver. Then there is a map
\[
\cofi (K(A)^{(E_0\setminus \Si (E))}\overset{1-N_E^t}\longrightarrow
K(A)^{(E_0)})\to K(L_A(E)),
\]
which induces a naturally split monomorphism at the level of homotopy groups
\begin{equation}\label{monohomo}
\pi_*(\cofi (K(A)^{(E_0\setminus \Si (E))}\overset{1-N_E^t}\longrightarrow
K(A)^{(E_0)})\to K_*(L_A(E)).
\end{equation}
\end{thm}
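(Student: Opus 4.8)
The plan is to reduce the row-finite case to the finite case of Theorem~\ref{thm:skewle} in two stages: first from arbitrary finite quivers to finite quivers without sources, using the subquiver $\tilde{E}$ together with Lemma~\ref{addingv}; and then from row-finite quivers to finite ones by a filtering-colimit argument. The map in the statement is the one induced by the vertex inclusions $A\cong Ap_i\hookrightarrow L_A(E)$, which assemble into $K(A)^{(E_0)}\to K(L_A(E))$. Since $[\alpha\alpha^*]=[p_{r(\alpha)}]$, relation \eqref{alphaalpha^*} yields $[p_i]=\sum_j n_{ij}[p_j]$ for each non-sink $i$, so the composite with $1-N_E^t$ is canonically null (this is made precise by the decomposition in Theorem~\ref{thm:skewle}), and we obtain a natural map $\lambda_E\colon C_E\to K(L_A(E))$, where $C_E:=\cofi(K(A)^{(E_0\setminus\Si(E))}\overset{1-N_E^t}\to K(A)^{(E_0)})$. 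The task is to show that $\pi_*(\lambda_E)$ is a split monomorphism, naturally in $A$.

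For finite $E$ I would argue as follows. By Lemma~\ref{Ewithoutso}, $\tilde{E}$ is a finite complete subgraph of $E$ without sources, and every oriented cycle of $E$ lies in $\tilde{E}$; hence the complement $E_0\setminus\tilde{E}_0$ spans an acyclic subgraph, whose vertices can be totally ordered so that, added back one at a time, each is a source in the enlarged subquiver and emits all its edges into the already-constructed part. At each such step Lemma~\ref{addingv} provides a full-corner inclusion $L_A(F)\hookrightarrow L_A(F')$ and a compatible isomorphism $C_F\cong C_{F'}$. Now $A$ is $H'$-unital in both cases of the hypothesis (rings with local units are filtering colimits of unital rings, hence $H'$-unital by Remark~\ref{rem:hh'}), and for finite $F$ the algebra $L_A(F)=L_\Z(F)\otimes A$ is again $H'$-unital: in the local-units case it is the colimit $\colim_e L_{eAe}(F)$ of the unital rings $L_{eAe}(F)$, and in the $\Z$-torsion-free case $L_\Z(F)$ is $\Z$-flat and $H$-unital, so the tensor product is $H$-unital by the closure properties recalled in Remark~\ref{rem:hh'}. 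Consequently the Morita-invariance Lemma~\ref{lem:morita} upgrades each full-corner inclusion to an equivalence $K(L_A(F))\xrightarrow{\sim}K(L_A(F'))$, so that Lemma~\ref{addingv} is available even though $A$ need not be unital. Transporting $\lambda$ along this chain of equivalences identifies $\lambda_E$ with $\lambda_{\tilde{E}}$, which is a split monomorphism because Theorem~\ref{thm:skewle} exhibits $C_{\tilde{E}}$ as a natural direct summand of $K(L_A(\tilde{E}))$. This disposes of the finite case.

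For a general row-finite $E$ I would write $L_A(E)=\colim_F L_A(F)$ as a filtering colimit over its finite complete subgraphs $F$. As $K$-theory commutes with filtering colimits of $H'$-unital rings, $K(L_A(E))=\hoco_F K(L_A(F))$. On the other side, for $F\subseteq F'$ a vertex is eventually a non-sink in the family $(F)$ exactly when it is a non-sink in $E$, so $\colim_F(F_0\setminus\Si(F))=E_0\setminus\Si(E)$ and the matrices $N_F^t$ are compatible; hence $C_E=\hoco_F C_F$. Since the $\lambda_F$ are natural in $F$, passing to the colimit recovers $\lambda_E$ and identifies $\pi_*(\lambda_E)$ with $\colim_F\pi_*(\lambda_F)$. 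A filtering colimit of split monomorphisms whose retractions are chosen compatibly is again a split monomorphism, which completes the argument.

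The main obstacle is not any single estimate but the compatibility bookkeeping that makes the two reductions mesh with the intrinsic map $\lambda$. Concretely one must check: (a)~that the summand inclusion furnished by Theorem~\ref{thm:skewle} really is the vertex-induced map $\lambda_{\tilde{E}}$, which requires unwinding the corner skew-Laurent description $L_A(\tilde{E})=(L_0\otimes A)[t_+,t_-,\phi\otimes 1]$ of \eqref{skewle} and the identifications in the proof of Theorem~\ref{thm:skewyao}; (b)~that the Morita equivalences and cofiber isomorphisms of Lemma~\ref{addingv} carry $\lambda_F$ to $\lambda_{F'}$; and (c)~that the retractions splitting the $\lambda_F$ descend compatibly through the colimit. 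Point (a) is the crux: once $\lambda_{\tilde{E}}$ is matched with the canonical summand, naturality in $A$ is automatic and (b) and (c) become formal diagram chases.
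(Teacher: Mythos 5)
Your overall route is the same as the paper's (reduce the finite case to the source-free quiver $\tilde E$ by repeatedly applying Lemma \ref{addingv}, quote Theorem \ref{thm:skewle}, then pass to a filtering colimit over finite complete subgraphs), but your finite-case reduction breaks down at the sinks of $E$. Lemma \ref{addingv} carries the hypothesis $s_E^{-1}(v)\neq\emptyset$, and for a sink both of its conclusions are false: if $v$ is adjoined as an isolated vertex, then $L_A(F')\cong L_A(F)\times A$, so $L_A(F)$ is a corner of $L_A(F')$ which is \emph{not} full, $K(L_A(F'))\simeq K(L_A(F))\oplus K(A)$, and likewise $C_{F'}\simeq C_F\oplus K(A)$, because a sink contributes a coordinate to the target of $1-N^t$ but none to its source. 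Every sink of $E$ not reachable from a closed path lies in $E_0\setminus\tilde E_0$, so your chain is forced to adjoin such vertices, and at those steps the asserted identifications fail; in the extreme case of an acyclic quiver one has $\tilde E=\emptyset$, your induction cannot even start, and the conclusion you transport ($C_E\simeq C_{\tilde E}=0$) is simply false, since $C_E$ contains $K(A)^{\Si(E)}$ as a summand. This is exactly why the paper does not start the chain at $\tilde E$: it starts at $F^0$ with $F^0_0=\tilde E_0\cup\Si(E)$ and $F^0_1=\tilde E_1$, so that $\Si(F^i)=\Si(E)$ for all $i$ and every vertex added via Lemma \ref{addingv} is a non-sink, while the $\ell$ sinks outside $\tilde E_0$ contribute an explicit extra summand $K(A)^{\ell}$ to both $K(L_A(F^0))$ and the cofiber, as in \eqref{nilterms}. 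The repair is local, but as written your chain argument, and with it the identification of $\lambda_E$ with $\lambda_{\tilde E}$, is incorrect whenever $E$ has such sinks.

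A second, smaller point: in the colimit step you assume the retractions splitting the maps $\pi_*(\lambda_F)$ are ``chosen compatibly,'' and in (c) you call this a formal diagram chase. It is not formal: a filtering colimit of split monomorphisms need not split unless the splittings are compatible, and naturality of the inclusions $\lambda_F$ alone gives no control over the complementary (twisted nil) summands. The paper's mechanism is concrete: the assignment $E\mapsto\tilde E$ is functorial for complete graph homomorphisms, and the arrows $\alpha_i$ (hence the corner isomorphisms $\phi$) are chosen compatibly over all finite complete subquivers, so that each transition map $K(L_A(F^1))\to K(L_A(F^2))$ decomposes as $\kappa_+\oplus\kappa_-\oplus\kappa$ with respect to the splitting of Theorem \ref{thm:skewle}; the cokernel terms then form a directed system and the splittings pass to the colimit. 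Some such device must be supplied in your write-up; without it the last step does not close.
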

\begin{proof}
We first deal with the case of a finite quiver $E$. Set $d=|E_0|$
and $d'=| \Si (E)|$.

Consider the subquiver $F$ of $E$ given by $F_0=\tilde{E}_0\cup \Si
(E)$ and $F_1=\tilde{E}_1$. Using Lemma \ref{Ewithoutso} we see
that $F$ is a complete subgraph of $E$ such that every non-trivial
closed path on $E$ has all its arrows and vertices in $F$. Moreover
we have $\Si (F)=\Si (E)$.

Set $p=|F_0|$ and $k=d-p$. Suppose that $k>0$. In this case we will
build a chain of complete subgraphs of $E$,  $F=F^0\subset F^1\subset
\cdots \subset F^k=E$, with $| F^{i+1}_0\setminus F^i_0|=1$, and
such that the following conditions hold for every $i=0,\dots ,k-1$:

(i) $\Si (F^i)= \Si (E)$.

(ii) $L_\Z(F^i)$ is a full corner in  $L_\Z(F^{i+1})$.

(iii) \begin{align*} & \cofi (\begin{pmatrix} 0 \\
1_{p+i-d'}
\end{pmatrix} -N_{F^i}^t\colon K(A)^{p+i-d'}\longrightarrow K(A)^{p+i})\\
& \cong \cofi (\begin{pmatrix} 0 \\ 1_{p+i+1-d'}
\end{pmatrix} - N_{F^{i+1}}^t\colon K(A)^{p+i+1-d'}\longrightarrow K(A)^{p+i+1}).
\end{align*}

Suppose we have defined $F^i$ for $0\le i<k$. We are going to define
$F^{i+1}$. We first show that there is a vertex $v\in E_0\setminus
F^i_0$ such that $r_E(s_E^{-1}(v))\subseteq F^i_0$. Pick
$v_1\in E_0\setminus F^i_0$. Since $\Si (F^i)=\Si (E)$ we have that
$s_E^{-1}(v_1)\ne \emptyset $. If there exists $\alpha _1\in
s_E^{-1}(v_1)$ such that $r_E(\alpha _1 )\notin F^i_0$, set $v_2=
r_E(\alpha _1)$. Since the number of vertices in $E_0\setminus
F^i_0$ is finite, proceeding in this way we will get either a vertex
$v\in E_0\setminus F^i_0$ such that $r_E(s_E^{-1}(v))\subseteq
F^i_0$ or a path $\gamma =\alpha _1\alpha _2\cdots \alpha _m$ with
$\alpha _j\in E_1\setminus F^i_1 $ such that $r_E(\alpha _m)\in
\{r_E(\alpha _1),\dots , r_E(\alpha_{m-1}) \}$. But the latter case
cannot occur: the path $\gamma $ would not belong to $L_E$ and
consequently we would obtain  $r_E(\alpha _m)\in
\tilde{E}_0\subseteq F^i_0$, a contradiction. Therefore we put
$F^{i+1}_0=F^i_0\cup \{ v\}$ and $F^{i+1}_1= F^i_1\cup s_E^{-1}(v)$.
By construction we get (i) and that $F^{i+1}$ is a complete subgraph
of $E$, and (ii) and (iii) follow from Lemma \ref{addingv}.

Set $\ell = |\{ v\in \Si (E)\mid r_{E_*}^{-1}(v)\subseteq L_E\}|$.
Then we clearly have $K(L_A(F))\cong K(L_A(\tilde{E}))\oplus
K(A)^{\ell}$. Now by Lemma \ref{Ewithoutso} $\tilde{E}$ is a quiver
without sources. Note that $|\tilde{E}_0|-|\Si
(\tilde{E})|=(p-\ell)-(d'-\ell)=p-d'$, so from

Theorem \ref{thm:skewle} we get a decomposition
\begin{align*}
K(L_A(\tilde{E}))= & NK(L_0(\tilde{E})\otimes A,\phi\otimes
1)_+\oplus NK(L_0(\tilde{E})\otimes A,\phi\otimes 1)_-\oplus\\ &
\cofi (\begin{pmatrix} 0 \\ 1_{p-d'}
\end{pmatrix}-N_{\tilde{E}}^t \colon K(A)^{p-d'}\to K(A)^{p-\ell}).
\end{align*}
Hence
\begin{align}\label{nilterms}
K(L_A(F))\cong & K(L_A(\tilde{E}))\oplus K(A)^{\ell}\\
&\cong NK(L_0(\tilde{E})\otimes A,\phi\otimes 1)_+\oplus NK(L_0(\tilde{E})\otimes A,\phi\otimes 1)_-\nonumber\\
&\oplus \cofi (\begin{pmatrix}
0 \\ 1_{p-d'} \end{pmatrix}-N_F^t \colon K(A)^{p-d'}\to K(A)^{p}).\nonumber
\end{align}
This gives the result for $F^0=F$. Applying inductively (ii) and
(iii) to the quivers of the chain $F=F^0\subset F^1 \subset \cdots
\subset F^k=E$, and using Lemma \ref{lem:morita}, we get the
assertions of theorem for finite $E$. Let $E$ be a row-finite
quiver. By \cite[Lemma 3.2]{AMP}, $E$ is the filtered colimit of its
finite complete subgraphs. Since filtered colimits are exact,
$\cofi$ commutes with them, so we get the monomorphism in
(\ref{monohomo}). To compute the cokernel of this map, note that the
construction of the graph $\tilde{E}$ is functorial in the category
of row-finite quivers and complete graph homomorphisms. Moreover we
get $\tilde{E}=\colim \tilde{F}$, where $F$ ranges on the family of
all finite complete subquivers of $E$. For each $i\in \tilde{E}_0$
we select an arrow $\alpha _i\in \tilde{E}_1$ such that $r(\alpha
_i)=i$. This choice induces a compatible choice of arrows in the
quivers $\tilde{F}$ corresponding to finite complete subquivers $F$
of $E$. Hence, if $F^1\subseteq F^2$ are two finite complete
subquivers of $E$, then the corresponding corner-isomorphisms $\phi
^i$ on $L(\tilde{F}^i)_0$ satisfy that $\phi ^2
|_{L(\tilde{F}^1)_0}=\phi ^1$, and thus we obtain maps
$$\kappa _\pm \colon NK(L(F^1)_0\otimes A,\phi_1\otimes
1)_\pm\longrightarrow  NK(L(F^2)_0\otimes A,\phi_2\otimes 1)_\pm$$
such that the map $K(L_A(F^1))\to K(L_A(F^2))$, written in terms of
the decomposition given in Theorem \ref{thm:skewle}, is of the form
$\kappa _+\oplus \kappa _-\oplus \kappa$, where $\kappa $ is the map
between the corresponding hocofiber terms. The result follows.
\end{proof}

\begin{rem}\label{rem:coker}
The proof above shows that cokernel of the map \eqref{monohomo} can be expressed in terms of twisted nil-$K$-groups.
If $E$ is finite, the cokernel is
$NK_*(L_0(\tilde{E})\otimes A,\phi\otimes 1)_+\oplus NK_*(L_0(\tilde{E})\otimes A,\phi\otimes 1)_+$, by \eqref{nilterms}.
In the general case, it is the colimit of the cokernels corresponding to each of the finite complete subquivers.
\end{rem}

\section{Leavitt rings with regular supercoherent coefficients}\label{sec:regcoh}

In this section we will determine the $K$-theory of the Leavitt path
ring of a row-finite quiver over a regular supercoherent ring $k$.

Recall that a unital ring $R$ is said to be {\it coherent} if its
finitely presented modules form an abelian subcategory of the
category of all modules. We say that $R$ is {\it regular coherent}
if it is coherent and in addition any finitely presented module has
finite projective dimension. Equivalently $R$ is regular coherent if
any finitely presented module has a finite resolution by finitely
generated projective modules. The ring $R$ is called {\it
supercoherent} in case all polynomial rings $R[t_1,\dots ,t_p]$ are
coherent, see \cite{Gers}. Note that every Noetherian ring is
supercoherent. A more general version of regularity was introduced
by Vogel, see \cite{bihler}. We will call this concept {\it
Vogel-regularity}. For a coherent ring $R$, Vogel-regularity agrees
with regularity (\cite[Proposition 10]{bihler}). Since
Vogel-regularity is stable under the formation of polynomial rings
(\cite[Proposition 5(3)]{bihler}), it follows that $R[t_1, \dots
,t_p]$ is regular for every $p$ in case $R$ is regular
supercoherent. Observe also that any flat universal localization
$R\to R\Sigma^{-1}$ of a regular (super)coherent ring is also
regular (super)coherent. This is due to the fact that every finitely
presented $R\Sigma^{-1}$-module is induced from a finitely presented
$R$-module (\cite[Corollary 4.5]{scho}). In particular all the rings
$R[t_1,t_1^{-1},\dots ,t_p,t_p^{-1}]$ are regular supercoherent if
$R$ is regular supercoherent.

Next we will compute the $K$-theory of the Leavitt algebra of a quiver $E$ over a supercoherent coefficient
ring $k$. As a first step, we consider the case where $E$ is finite and without
sources.

\begin{prop}
\label{Dregcoh}
 Let $E$ be a finite quiver without sources and let $k$ be a
regular supercoherent ring. Let $B=\phi^{-1} L_0 $, where $L_0$ is
the homogeneous component of degree $0$ of $L_k(E)$.  Let $D=B\oplus
k$ be the $k$-unitization of $B$. Then $D$ is regular supercoherent.
\end{prop}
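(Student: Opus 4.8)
The plan is to exhibit $D$ as a ring that is built from $k$ via operations known to preserve regular supercoherence, together with a filtered-colimit argument. The key structural observation is that $B=\phi^{-1}L_0$ is a filtered colimit of the finite-dimensional-over-$k$ rings $L_{0,n}\otimes k$, and that each $L_{0,n}$ is (since $E$ has no sources, as noted just above Theorem \ref{thm:skewle}) a finite product of matrix rings over $k$. Thus each $L_{0,n}\otimes k$ is a finite product of matrix rings $M_{r}(k)$, and its $k$-unitization differs from a genuinely unital ring only in a controlled way. Since matrix rings $M_r(k)$ over a regular supercoherent ring $k$ are again regular supercoherent (Morita invariance of regular coherence, and stability of the matrix construction under the formation of polynomial rings, because $M_r(k)[t_1,\dots,t_p]=M_r(k[t_1,\dots,t_p])$), each finite stage is regular supercoherent.

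\textbf{Main steps.} First I would identify the transition maps in the colimit $B=\colim_n L_{0,n}$ explicitly: as recorded in the computation preceding Theorem \ref{thm:skewle}, the inclusion $L_{0,n}\hookrightarrow L_{0,n+1}$ is a block-diagonal corner embedding, and $\hat\phi$ is the corner isomorphism realizing $B$ as the stationary colimit. Second, I would verify that $D$, the $k$-unitization of $B$, is a filtered colimit of the unital rings $D_n$, where $D_n$ is obtained by adjoining a unit to $L_{0,n}\otimes k$ over $k$; each $D_n$ is a finite product of matrix rings over $k$ (the extra unit being absorbed into one more copy of $k$ or into the existing product structure), hence regular supercoherent by the matrix argument above. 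Third — and this is the genuinely delicate point — I would show that regular supercoherence passes to the colimit $D=\colim_n D_n$. The transition maps here are not arbitrary: each $D_n\to D_{n+1}$ is a \emph{flat} ring homomorphism, indeed the inclusions $L_{0,n}\hookrightarrow L_{0,n+1}$ are corner inclusions along block-diagonal idempotents, and $D$ is realized as a localization-type colimit governed by the corner isomorphism $\hat\phi$. I would therefore invoke the result cited in the excerpt that flat universal localizations of regular supercoherent rings are again regular supercoherent (via \cite{scho}, Corollary 4.5, that every finitely presented module over the colimit is induced from a finitely presented module over some finite stage), applying it to reduce any finitely presented $D$-module, together with the polynomial rings $D[t_1,\dots,t_p]$, back to a finite stage $D_n[t_1,\dots,t_p]$ where regularity is already known.

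\textbf{The main obstacle.} The hard part is the passage to the colimit: coherence is \emph{not} in general preserved under filtered colimits, so the third step cannot be a soft argument. The crux is to use the specific corner-isomorphism structure $B=\hat\phi^{-1}L_0$ to present $D$ as a flat (universal localization) colimit rather than an arbitrary one. Concretely I would verify that every finitely presented $D$-module $M$ (and likewise every finitely presented $D[t_1,\dots,t_p]$-module) is extended from a finitely presented module over some $D_n$, so that a finite projective resolution downstairs base-changes — using flatness of $D_n\to D$ — to a finite projective resolution of $M$; this simultaneously establishes coherence and finiteness of projective dimension of $D[t_1,\dots,t_p]$ for every $p$, which is exactly regular supercoherence of $D$. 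Once the colimit is recognized as flat and each finitely presented module is seen to descend to a finite stage, the remaining verifications are the routine matrix and Morita computations already available for the $D_n$.
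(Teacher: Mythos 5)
Your overall architecture --- finite stages that are products of matrix rings over $k$, a filtered colimit with flat transition maps, and descent of finitely presented modules to a finite stage followed by flat base change --- is exactly the skeleton of the paper's proof, which runs this argument through \cite[Proposition 1.6]{Gers} twice: once for $L_0=\colim_n L_{0,n}$ and once for $D=\colim_i(e_iBe_i\oplus k)$. But there is a genuine gap at precisely the point you flag as delicate: you assert flatness of the transition maps on the grounds that they are ``corner inclusions along block-diagonal idempotents,'' and this justification fails. First, the inclusions $L_{0,n}\hookrightarrow L_{0,n+1}$ are not corner inclusions at all; they are unital block-diagonal embeddings, and the paper proves that $L_{0,n+1}$ is projective as a left $L_{0,n}$-module by an explicit decomposition of $L_{0,n+1}$ into summands isomorphic to modules of the form $L_{0,n}\eta\eta^*$ indexed by paths. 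Second, the maps that genuinely involve corners --- the $\phi$-direction of the colimit $B=\phi^{-1}L_0$, which reduces to the inclusion $eL_0e\subset L_0$ with $e=\sum_{i\in E_0}\alpha_i\alpha_i^*$ --- are exactly where flatness is \emph{not} automatic: for a general idempotent $e$ in a unital ring $S$, $eS$ need not be flat as a left $eSe$-module (take $S$ to be the upper triangular ring $\left(\begin{smallmatrix} R & M \\ 0 & k\end{smallmatrix}\right)$ with $M$ a non-flat left $R$-module and $e=e_{11}$). The paper has to work for this: using the structure of $L_{0,1}$ --- and hence the no-sources hypothesis, which guarantees a matrix factor $M_{|P(1,i)|}$ for every vertex $i$ containing the rank-one idempotent $\alpha_i\alpha_i^*$ --- it produces a central idempotent $z\in L_0$ with $e$ full in $zL_0$, and then the standard Morita argument $zL_0\oplus P\cong(L_0e)^n$ gives $eL_0\oplus eP\cong (eL_0e)^n$, so $eL_0$ is projective over $eL_0e$. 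This fullness step is the real content of the proposition and is absent from your proposal; declaring the remaining verifications to be ``routine matrix and Morita computations'' inverts which parts are routine and which are not.

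A second, smaller defect: the result you invoke for the colimit step --- that flat universal localizations of regular supercoherent rings are regular supercoherent, via \cite[Corollary 4.5]{scho} --- does not apply here, because $D=\colim_n D_n$ is not a universal localization of any finite stage: nothing is being inverted in Schofield's sense, and the paper cites \cite{scho} only to handle Laurent polynomial extensions. What you actually need, and what your descent-plus-flat-base-change sketch is implicitly reproving, is Gersten's criterion \cite[Proposition 1.6]{Gers}: a filtered colimit $R=\colim_i R_i$ of regular coherent rings with $R$ flat as a left $R_i$-module for each $i$ is regular coherent. With that citation in place of Schofield's, and with the two flatness verifications above actually supplied, your argument becomes the paper's proof.
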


\begin{proof} Since the ring corresponding to $k[t_1,\dots ,t_p]$
is $D[t_1,\dots,t_p]$, it suffices to show that $D$ is regular
coherent whenever $k$ is so.

We are going to apply  \cite[Proposition 1.6]{Gers}: If $R=\colim
_{i\in I}R_i$, where $I$ is a filtering poset, the ring $R$ is a
flat left $R_i$-module for all $i\in I$, and each $R_i$ is regular
coherent, then $R$ is regular coherent.

We will show that $L_0$ is flat as a left $L_{0,n}$-module. It is
enough to show that $L_{0,n+1}$ is flat over $L_{0,n}$. Observe that
$$L_{0,n+1}=\bigoplus_{|\gamma |\le n, r(\gamma )\in \Si (E)} L_{0,n}\gamma
\gamma ^*\bigoplus \bigoplus _{|\gamma|=n+1} L_{0,n+1}\gamma \gamma
^*,$$ so that we only need to analyse the terms $L_{0,n+1}\gamma
\gamma ^*$ with $\gamma \in E_{n+1}$. Write $\gamma =\gamma_0 \alpha
$ with $\gamma _0\in E_n$ and $\alpha \in E_1$. For $v\in E_0$ set
$$Z_{v,n}=\{\beta \in E_1 \mid r(\beta )=v \text{ and there exists } \eta \in E_n
 \text{ such that } r(\eta)=s(\beta)\}.$$
For each $\beta \in Z_{v,n}$, select $\eta _{\beta}\in E_n$ such
that $r(\eta_{\beta})=s(\beta)$. Then
$$L_{0,n+1}\gamma \gamma ^*=\bigoplus _{\beta \in Z_{r(\alpha),n}}
L_{0,n}\eta_{\beta}\beta \alpha^*(\gamma_0)^* \cong \bigoplus
_{\beta \in Z_{r(\alpha),n}}L_{0,n}\eta_{\beta}(\eta_{\beta})^*.
$$
Thus $L_{0,n+1}$ is indeed projective as a $L_{0,n}$-module.

By \cite[Proposition 1.6]{Gers} we get that $L_0$ is regular
coherent. Now observe that $D=\colim  (e_iBe_i\oplus k)$, where
$e_i$ is the image of $1\in L_0$ through the canonical map $\varphi
_i\colon L_0\to B$ to the colimit. Since $e_iBe_i\cong L_0$ is
unital, we get that $e_iBe_i\oplus k\cong L_0\times k$, where
$L_0\times k$ denotes the ring direct product of $L_0$ and $k$, and
so it is regular coherent by the above. By another application of
\cite[Proposition 1.6]{Gers}, it suffices to check that
$e_{i+1}Be_{i+1}\oplus k$ is flat as a left $e_iBe_i\oplus
k$-module, which in turn is equivalent to checking that $L_0$ is flat
as a left $(1-e)k\times eL_0e$-module, where $e=\phi (1)=\sum _{i\in
E_0} \alpha _i\alpha _i ^*$. Recall that, for $i\in E_0$, $\alpha
_i\in E_1$ is such that $r(\alpha _i)=i$. We have
$L_0=(1-e)L_0\oplus eL_0$ and since $(1-e)L_0$ is flat as a left
$(1-e)k$-module, it suffices to show that $eL_0$ is flat as a left
$eL_0e$-module. Because
$$L_{0,1}\cong k^{\Si (E)}\times \prod _{i\in E_0}M_{|P(1,i)|}(\Z)$$
we see that there is a central idempotent $z$ in $L_0$ such that
$e\in zL_0$ and $e$ is a full idempotent in $zL_0$, that is
$zL_0=zL_0eL_0$. Now a standard argument shows that $eL_0$ is indeed
projective as a left $eL_0e$-module. Indeed there exists $n\ge 1$
and a finitely generated projective $L_0$-module $P$ such that
$$zL_0\oplus P\cong (L_0e)^n;$$
tensoring this with $eL_0$ we get $eL_0\oplus eP\cong
(eL_0e)^n$, as wanted. This concludes the proof.
\end{proof}

Our next lemma follows essentially from Waldhausen \cite{Wald}.

\begin{lem}
\label{lemma:Wald} Let $R$ be a regular supercoherent ring and let
$\phi$ be an automorphism of $R$. Extend $\phi$ to an automorphism of
$R[t_1,t_1^{-1},\dots ,t_p,t_p^{-1}]$ by  $\phi(t_i)=t_i$.
Then $NK_n(R[t_1,t_1^{-1},\dots,t_p,t_p^{-1}],\phi)_{\pm}=0$
for every $p\ge 0$ and every $n\in\Z$.
\end{lem}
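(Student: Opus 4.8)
The plan is to deduce everything from the fact, established in the discussion at the beginning of this section, that $S:=R[t_1,t_1^{-1},\dots,t_p,t_p^{-1}]$ is regular supercoherent. Since the particular form of the extension of $\phi$ plays no role, I would prove the apparently more general statement, of which the lemma is the case $T=S$: for every regular supercoherent ring $T$ and every automorphism $\psi$ of $T$ one has $NK_n(T,\psi)_\pm=0$ for all $n\in\Z$. Two reductions come first. Because $NK(T,\psi)_-=NK(T,\psi^{-1})_+$ and $\psi^{-1}$ is again an automorphism of $T$, it suffices to treat the $+$-groups; and since $NK(T,\psi)_+=\cofi(K(T)\to K(T[t,\psi]))$ splits off $K(T[t,\psi])$, the vanishing of $NK_n(T,\psi)_+$ is equivalent to $K(T)\to K(T[t,\psi])$ being an isomorphism on $\pi_n$.

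For the connective range $n\ge 0$ I would invoke Waldhausen \cite{Wald}: his computation of the $K$-theory of twisted polynomial rings shows that the twisted $\mathrm{Nil}$-type groups of a regular coherent ring vanish in nonnegative degrees, so $K(T)\to K(T[t,\psi])$ is an equivalence on connective covers and $NK_n(T,\psi)_+=0$ for $n\ge 0$. This is the part that ``follows essentially from Waldhausen''; the remaining task is to propagate the vanishing into negative degrees, and this is where supercoherence enters.

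For $n<0$ I would argue by descending induction using Bass's fundamental exact sequence. Adjoining a central variable $s$ fixed by $\psi$ one has $T[t,\psi][s]=T[s][t,\psi]$ and $T[t,\psi][s,s^{-1}]=T[s,s^{-1}][t,\psi]$, and the four-term exact sequence of the (nonconnective) Bass--Heller--Swan theorem is natural in the ring and exact for all $n$. Applying it to the split inclusion $T\hookrightarrow T[t,\psi]$ and passing to cokernels — the compatible splittings induced by $t\mapsto 0$ make all vertical maps split injective, so the snake lemma yields an exact cokernel sequence — produces, for each $n$, an exact sequence
\begin{multline*}
0\to NK_n(T,\psi)_+\to NK_n(T[s],\psi)_+\oplus NK_n(T[s^{-1}],\psi)_+\\ \to NK_n(T[s,s^{-1}],\psi)_+\to NK_{n-1}(T,\psi)_+\to 0.
\end{multline*}
Now suppose inductively that $NK_m(U,\psi)_+=0$ for every regular supercoherent ring $U$, every automorphism, and every $m\ge n$, the base case $n=0$ being the connective step. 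Since $T$ is regular supercoherent, so is the central Laurent localization $T[s,s^{-1}]$ (a flat universal localization of a regular supercoherent ring is regular supercoherent, as recalled at the start of the section). Hence $NK_n(T[s,s^{-1}],\psi)_+=0$ by the inductive hypothesis, and exactness of the displayed sequence forces $NK_{n-1}(T,\psi)_+=0$. This completes the induction and hence the proof.

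I expect the main obstacle to be bookkeeping rather than conceptual: verifying that the Bass sequence descends to the twisted $NK$-groups (which rests on the naturality of the fundamental theorem together with the compatible retractions $t\mapsto 0$, so that the $s$-Nil terms cancel and the cokernel sequence stays exact) and checking that regular supercoherence is inherited by all the polynomial and Laurent extensions that occur, so that the inductive hypothesis genuinely applies to $T[s,s^{-1}]$.
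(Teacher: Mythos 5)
Your proposal is correct and is essentially the paper's own proof: the connective range comes from Waldhausen (the paper invokes \cite[Theorem 4]{Wald} only for $n\ge 1$, and your claimed $n=0$ base case---which is not obviously within Waldhausen's range---is anyway recovered by the first step of your descending induction, so the off-by-one is harmless), and negative degrees follow by descending induction on an auxiliary central Laurent variable, using that regular supercoherence passes to $T[s,s^{-1}]$. The only difference is bookkeeping: where you take cokernels of the natural map of Bass fundamental sequences along $T\hookrightarrow T[t,\psi]$, the paper computes $K_n$ of the same mixed ring $R[t,t^{-1}][s,\phi]=R[s,\phi][t,t^{-1}]$ in two ways---once splitting off the twisted nil-terms over the regular supercoherent ring $R[t,t^{-1}]$, once applying the Bass--Heller--Swan decomposition over $R[s,\phi]$---and cancels.
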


\begin{proof}
For $n\ge 1$ this follows from \cite[Theorem 4]{Wald}, because, as
we observed before, $R[t_1,t_1^{-1},\dots ,t_p,t_p^{-1}]$ is regular
coherent. Let $n\le 1$ and assume that $NK_i(R[t_1,t_1^{-1},\dots
,t_p,t_p^{-1}],\phi)_+=0$ for every $p\ge 0$, for every $i\ge n$,
and for every automorphism $\phi$ of $R$. To show the result for
$NK_{n-1}$, it will be enough to show that $NK_{n-1}(R,\phi)_+=0$.
Since $R[t,t^{-1}]$ is regular supercoherent we have
$$K_{n}((R[t,t^{-1}])[s,\phi])=K_{n}(R[t,t^{-1}])\oplus NK_{n}(R[t,t^{-1}],\phi)=K_{n}(R[t,t^{-1}]),$$
by induction hypothesis. It follows that
\begin{equation}
\label{eq:primK-n} K_{n}(R[t,t^{-1}][s,\phi])=K_{n}(R)\oplus
K_{n-1}(R)
\end{equation}
because $NK_{n}(R)=0$ again by induction hypothesis. On the other
hand we have
\begin{align}
\label{eq:secK-n} K_{n} & (R[s,\phi]  [t,t^{-1}])
=K_{n}(R[s,\phi])\oplus K_{n-1}(R[s,\phi])\oplus
NK_{n}(R[s,\phi])^2\\
\nonumber & =K_{n}(R)\oplus K_{n-1}(R)\oplus
NK_{n-1}(R,\phi)_+\oplus NK_{n}(R[s,\phi])^2.
\end{align}
Comparison of (\ref{eq:primK-n}) and (\ref{eq:secK-n}) gives
$$NK_{n-1}(R,\phi)_+=0= NK_{n}(R[s,\phi]),$$
as desired.
\end{proof}

\begin{prop}
\label{firstregcoh} Let $k$ be a regular supercoherent ring and let
$E$ be a finite quiver without sources. Set $d=|E_0|$ and $d'=|\Si
(E)|$. Then $$K(L_k(E))\cong \cofi
(K(k)^{d-d'}\overset{1-N_E^t}\longrightarrow K(k)^{d}).$$
\end{prop}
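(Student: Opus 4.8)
The plan is to combine Theorem~\ref{thm:skewle} with the vanishing of the twisted nil-$K$-groups guaranteed by Lemma~\ref{lemma:Wald}. Since $E$ is finite without sources, Theorem~\ref{thm:skewle} gives a decomposition
\[
K(L_k(E))\cong NK(L_0\otimes k,\phi\otimes 1)_+\oplus NK(L_0\otimes k,\phi\otimes 1)_-\oplus \cofi(K(k)^{d-d'}\overset{1-N_E^t}\longrightarrow K(k)^{d}),
\]
where $L_0$ is the degree-$0$ component of $L_\Z(E)$, so that $L_0\otimes k$ is the degree-$0$ component of $L_k(E)$. The desired conclusion is therefore exactly the statement that both nil-terms vanish. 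First I would record that, by definition, $NK(L_0\otimes k,\phi\otimes 1)_{\pm}=NK(\tilde{B},\hat\phi)_{\pm}$, where $B=\phi^{-1}(L_0\otimes k)$ is the colimit along $\phi\otimes 1$ and $\hat\phi$ is the induced automorphism on its unitization $\tilde B$.

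The key step is to bring Proposition~\ref{Dregcoh} to bear: it asserts that the $k$-unitization $D=B\oplus k$ of $B=\phi^{-1}L_0$ is regular supercoherent. Thus $\tilde B$ differs from $D$ only in the choice of unit adjunction, and in either case we are looking at the twisted nil-$K$-groups of an automorphism of a regular supercoherent unital ring. Lemma~\ref{lemma:Wald}, applied with $p=0$ to the ring $D$ (respectively $\tilde B$) and the automorphism $\hat\phi$, gives $NK_n(D,\hat\phi)_{\pm}=0$ for all $n\in\Z$. Since $NK(L_0\otimes k,\phi\otimes 1)_{\pm}$ is precisely this nil-term, it vanishes, and the decomposition collapses to the claimed isomorphism.

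The main obstacle is the careful bookkeeping needed to match the ring appearing in Proposition~\ref{Dregcoh} with the one to which Lemma~\ref{lemma:Wald} applies. Concretely, I must check that the automorphism $\hat\phi$ of $B$ (induced by the corner isomorphism $\phi$ on $L_0$) extends to an automorphism of the regular supercoherent unitization $D$ in such a way that Lemma~\ref{lemma:Wald} is genuinely applicable, and that the nil-term $NK(\tilde B,\hat\phi)_{\pm}$ computed via the unitization $\tilde B=B\oplus\Z$ agrees with the one computed via $D=B\oplus k$. Since $k$ itself need not be $\Z$, one should verify that extending scalars from $\Z$ to $k$ does not disturb the vanishing; this is harmless because the nil-term is intrinsically a property of the $k$-algebra structure, and the proof of Lemma~\ref{lemma:Wald} goes through verbatim for automorphisms of any regular supercoherent unital ring. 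Once this identification is secure, the result follows immediately, so the only real content beyond the cited results is this compatibility check.
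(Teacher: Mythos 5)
Your overall architecture is exactly the paper's: decompose $K(L_k(E))$ via Theorem \ref{thm:skewle}, invoke Proposition \ref{Dregcoh} for regular supercoherence of the $k$-unitization $D=B\oplus k$, and kill the nil-terms with Lemma \ref{lemma:Wald}. All three ingredients and their roles match the paper's proof.

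However, the step you yourself single out as ``the only real content'' --- identifying $NK(\tilde{B},\hat{\phi})_\pm$ (the nil-term as actually defined, via the $\Z$-unitization) with $NK(D,\hat{\phi})_\pm$ (the one Lemma \ref{lemma:Wald} controls) --- is then waved through with a justification that is false as stated. It is not true that the nil-term ``is intrinsically a property of the $k$-algebra structure'': the two unitizations give different answers in general. Since $B$ and the twisted polynomial rings $B[t,\hat{\phi}^{\pm 1}]$ are $H'$-unital (Proposition \ref{prop:huni} together with closure of $H'$-unitality under filtering colimits), excision splits the two nil-terms as
\begin{align*}
NK(\tilde{B},\hat{\phi})_\pm &\cong NK(\Z)\oplus \cofi\bigl(K(B)\to K(B[t,\hat{\phi}^{\pm 1}])\bigr),\\
NK(B\oplus k,\hat{\phi})_\pm &\cong NK(k)\oplus \cofi\bigl(K(B)\to K(B[t,\hat{\phi}^{\pm 1}])\bigr),
\end{align*}
so they differ by an $NK(k)$ summand, and they coincide here only because $k$ is regular supercoherent, whence $NK(k)=0$; for non-regular $k$ your ``harmless'' claim breaks down. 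This is precisely the sentence in the paper's proof reading ``since $k$ is regular supercoherent and $B$ is $H'$-unital we have $NK(\tilde{B},\hat{\phi})_{\pm}=NK(B\oplus k, \hat{\phi})_{\pm}$.'' The gap is small and closable with tools you already cite, but as written the key identification rests on a wrong reason, and it hides the fact that the regularity hypothesis on $k$ is used twice: once inside Proposition \ref{Dregcoh}/Lemma \ref{lemma:Wald}, and once to make the two unitizations agree.
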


\begin{proof}
Let $B=\phi ^{-1}L_0$, where $\phi =\phi \otimes 1\colon
L_0=L_0^{\Z}\otimes k\to L_0=L_0^{\Z}\otimes k$ is the
corner-isomorphism defined by $\phi (x)=t_+xt_-$, as in Section
\ref{sect:skew}. Note that since $k$ is regular supercoherent and
$B$ is $H'$-unital we have $NK(\tilde{B},\hat{\phi})_{\pm}=NK(B\oplus k,
\hat{\phi})_{\pm}$, where $B\oplus k$ denotes the $k$-unitization of
$B$. Now it follows from Proposition \ref{Dregcoh} that $B\oplus k$
is regular supercoherent. Therefore Lemma \ref{lemma:Wald} gives
that $NK(B\oplus k, \hat{\phi})_{\pm} =0$. It follows that
$NK(L_0,\phi)_{\pm}=NK(\tilde{B},\hat{\phi})_{\pm}=NK(B\oplus
k,\hat{\phi})_\pm =0$ and so the result follows from Theorem
\ref{thm:skewle}.
\end{proof}

\begin{thm}
\label{rf-coh} Let $k$ be a regular supercoherent ring and let $E$
be a row-finite quiver. Then
$$K(L_k(E))\cong \cofi (K(k)^{(E_0\setminus \Si (E))}\overset{1-N_E^t}\longrightarrow
K(k)^{(E_0)}).$$ It follows that there is a long exact sequence
\begin{multline*}
K_n(k)^{(E_0\setminus \Si (E))} \stackrel{1-N_E^t}\longrightarrow K_n(k)^{(E_0)}\\
\longrightarrow K_n(L_k(E))\longrightarrow K_{n-1}(k)^{(E_0\setminus \Si (E))}.
\end{multline*}
\end{thm}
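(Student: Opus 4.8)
The plan is to reduce the general row-finite case to the finite case treated in Proposition \ref{firstregcoh}, and then to pass from finite quivers without sources to arbitrary finite quivers, finally removing the finiteness hypothesis by a colimit argument. The essential simplification in the regular supercoherent setting is that all the twisted nil-$K$-groups $NK(L_0(\tilde E)\otimes k,\phi\otimes 1)_\pm$ vanish, so the split monomorphism of Theorem \ref{row-finitecase} becomes an isomorphism.

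First I would observe that $k$ regular supercoherent is in particular $H'$-unital (indeed, unital), so Theorem \ref{row-finitecase} applies and yields the natural split monomorphism
\[
\pi_*\bigl(\cofi (K(k)^{(E_0\setminus \Si (E))}\overset{1-N_E^t}\longrightarrow K(k)^{(E_0)})\bigr)\hookrightarrow K_*(L_k(E)).
\]
By Remark \ref{rem:coker}, the cokernel of this map is identified with the relevant twisted nil-$K$-groups; for a \emph{finite} quiver $E$ it is $NK_*(L_0(\tilde E)\otimes k,\phi\otimes 1)_+\oplus NK_*(L_0(\tilde E)\otimes k,\phi\otimes 1)_-$, and in the general row-finite case it is the filtered colimit over finite complete subquivers $F$ of the corresponding cokernels. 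Thus the theorem will follow once I show that each of these nil-terms vanishes when the coefficient ring is regular supercoherent.

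The heart of the matter is therefore to prove that $NK(L_0(\tilde E)\otimes k,\phi\otimes 1)_\pm=0$ for a finite quiver $\tilde E$ without sources. This is exactly the content already established inside the proof of Proposition \ref{firstregcoh}: writing $B=\phi^{-1}L_0$ and $D=B\oplus k$ for its $k$-unitization, Proposition \ref{Dregcoh} shows $D$ is regular supercoherent, and then Lemma \ref{lemma:Wald} gives $NK(D,\hat\phi)_\pm=0$; since $NK(L_0,\phi)_\pm=NK(\tilde B,\hat\phi)_\pm=NK(D,\hat\phi)_\pm$ by the defining conventions for $H'$-unital rings, the nil-terms vanish. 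So for finite $E$ the cokernel in Remark \ref{rem:coker} is zero and the monomorphism is an isomorphism, which is precisely the statement of Proposition \ref{firstregcoh} extended from no-sources to all finite quivers via the full-corner/Morita reductions (Lemmas \ref{addingv} and \ref{lem:morita}) already used in Theorem \ref{row-finitecase}.

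For an arbitrary row-finite quiver $E$, I would invoke that $E$ is the filtered colimit of its finite complete subgraphs \cite[Lemma 3.2]{AMP}. Because $K$-theory commutes with filtering colimits and $\cofi$ is exact, both sides of the desired isomorphism are the colimits of their finite-quiver counterparts; since each finite stage is an isomorphism by the previous paragraph, so is the colimit. Equivalently, by Remark \ref{rem:coker} the cokernel in the row-finite case is the colimit of cokernels that are each zero, hence zero, so the split monomorphism \eqref{monohomo} is an isomorphism. Finally, the long exact sequence is obtained simply by substituting $C=\cofi (K(k)^{(E_0\setminus \Si (E))}\overset{1-N_E^t}\longrightarrow K(k)^{(E_0)})\simeq K(L_k(E))$ into the cofiber long exact sequence \eqref{intro:cles}. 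The main obstacle is really the verification that the nil-terms vanish, i.e.\ Proposition \ref{Dregcoh} together with Lemma \ref{lemma:Wald}; once those are in hand the rest is a formal colimit-and-exactness bookkeeping.
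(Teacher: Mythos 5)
Your proposal is correct and takes essentially the same route as the paper: the paper's own proof likewise reduces to Proposition \ref{firstregcoh} (whose content is precisely the nil-term vanishing via Proposition \ref{Dregcoh} and Lemma \ref{lemma:Wald}) combined with the argument of Theorem \ref{row-finitecase} for arbitrary finite quivers, and then handles the row-finite case by the same filtered-colimit argument. Your packaging of the finite-to-row-finite step through Remark \ref{rem:coker} (killing the cokernel of the split monomorphism) is just an explicit restatement of what the paper's proof does.
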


\begin{proof} The case when $E$ is finite follows from Proposition \ref{firstregcoh}
and the argument of the proof of Theorem \ref{row-finitecase}. The general case follows
from the finite case, by the same argument as that given for the proof of \ref{row-finitecase}.
\end{proof}

\begin{cor}
Let $k$ be a principal ideal domain and let $E$ be a row-finite
quiver. Then
$$K_0(L_k(E))\cong \coker (1-N^t\colon \Z^{(E_0\setminus \Si
(E))}\longrightarrow \Z^{(E_0)}),$$ and
\begin{equation*}\begin{CD}K_1(L_k(E)) & \cong \coker (1-N^t\colon
K_1(k)^{(E_0\setminus \Si (E))}\longrightarrow K_1(k)^{(E_0)})\\
& \bigoplus \ker (1-N^t\colon \Z^{(E_0\setminus \Si
(E))}\longrightarrow \Z^{(E_0)}).\end{CD}\end{equation*}
\end{cor}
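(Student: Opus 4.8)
The plan is to read off $K_0$ and $K_1$ directly from the long exact sequence of Theorem \ref{rf-coh}, after checking that it applies. First I would observe that a principal ideal domain $k$ is regular supercoherent: it is Noetherian, hence supercoherent, and it is regular because submodules of free $k$-modules are free, so every finitely presented module has projective dimension at most one. Moreover, every finitely generated projective $k$-module is free, so $K_0(k)=\Z$, and since $k$ is regular its negative $K$-groups vanish, in particular $K_{-1}(k)=0$. Thus Theorem \ref{rf-coh} applies and yields, writing $S:=E_0\setminus\Si(E)$, the long exact sequence
\[
\cdots\to K_n(k)^{(S)}\overset{1-N^t}{\longrightarrow} K_n(k)^{(E_0)}\to K_n(L_k(E))\to K_{n-1}(k)^{(S)}\overset{1-N^t}{\longrightarrow}\cdots
\]
For $K_0$ I would use the segment ending at $K_{-1}(k)^{(S)}=0$, which reads $\Z^{(S)}\overset{1-N^t}{\longrightarrow}\Z^{(E_0)}\to K_0(L_k(E))\to 0$, giving $K_0(L_k(E))\cong\coker(1-N^t\colon\Z^{(S)}\to\Z^{(E_0)})$, the first assertion.

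For $K_1$ I would extract from the same sequence the short exact sequence
\[
0\to\coker(1-N^t\colon K_1(k)^{(S)}\to K_1(k)^{(E_0)})\to K_1(L_k(E))\to\ker(1-N^t\colon K_0(k)^{(S)}\to K_0(k)^{(E_0)})\to 0,
\]
where the right-hand term is the image of $K_1(L_k(E))\to K_0(k)^{(S)}$, identified by exactness with the kernel of the following $1-N^t$. Substituting $K_0(k)=\Z$ rewrites this kernel as $\ker(1-N^t\colon\Z^{(S)}\to\Z^{(E_0)})$.

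It then remains to split this short exact sequence. Its rightmost term is a subgroup of the free abelian group $\Z^{(S)}$, hence itself free abelian and therefore projective as a $\Z$-module; consequently the sequence splits and yields the claimed direct sum decomposition of $K_1(L_k(E))$. The only step that is not a purely formal substitution is this splitting, which rests on the fact that every subgroup of a free abelian group is free — valid also in the infinite-rank case that arises when $E$ has infinitely many vertices. Everything else follows by feeding the $K$-groups $K_0(k)=\Z$ and $K_{-1}(k)=0$ of a PID into the exact sequence of Theorem \ref{rf-coh}, so I do not anticipate any genuine obstacle beyond recording these inputs correctly.
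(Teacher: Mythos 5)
Your proposal is correct and is precisely the argument the paper intends: the corollary is stated as an immediate consequence of Theorem \ref{rf-coh}, obtained by feeding $K_0(k)=\Z$ and $K_{-1}(k)=0$ (valid since a PID is Noetherian regular, hence regular supercoherent) into the long exact sequence and splitting the resulting extension for $K_1$ because the kernel term is a subgroup of the free abelian group $\Z^{(E_0\setminus\Si(E))}$ and hence free. Your explicit attention to the infinite-rank case and to the vanishing of $K_{-1}(k)$ supplies exactly the details the paper leaves tacit.
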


\begin{rem}
\label{rem:regcoh} If we only assume that $k$ is regular coherent in
Theorem \ref{rf-coh}, then the long exact sequence in the statement
terminates at $K_0(L_k(E))$, although conjecturally the long exact
sequence should still stand under this weaker hypothesis on $k$, see
\cite{bihler}.
\end{rem}

\section{Homotopy algebraic $K$-theory of the Leavitt algebra}\label{sec:kh}

Homotopy algebraic $K$-theory, introduced by C. Weibel in \cite{kh}, is a particularly well-behaved variant of algebraic $K$-theory:
it is polynomial homotopy invariant, excisive, Morita invariant, and preserves filtering colimits. There is a comparison map
\begin{equation}\label{compakh}
K_*(A)\to KH_*(A).
\end{equation}
It is proved in \cite{kh} that if $A$ is unital and $K_n(A)\to
K_n(A[t_1,\dots,t_p])$ is an isomorphism for all $p\ge 1$ (i.e. $A$
is {\it $K_n$-regular}) then \eqref{compakh} is an isomorphism for
$*\le n$. In particular if $A$ is unital and {\it $K$-regular}, that
is, if it is $K_n$-regular for all $n$,
 then \eqref{compakh} is an isomorphism for all $*\in\Z$. Further, we have:

\begin{lem}\label{lem:reghuni} Let $A$ be a $H'$-unital ring, torsion free as a $\Z$-module. If $A$ is $K_n$-regular, then $K_m(A)\to KH_m(A)$ is an isomorphism for all $m\le n$.
\end{lem}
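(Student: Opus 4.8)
The plan is to reduce the non-unital statement to the known unital case via Weibel's result, using the fact that both $K$-theory and $KH$-theory of an $H'$-unital ring can be accessed through its unitalization by excision. Recall the comparison result from \cite{kh}: if $B$ is a \emph{unital} $K_n$-regular ring, then $K_m(B)\to KH_m(B)$ is an isomorphism for all $m\le n$. So the whole difficulty is to transfer this conclusion from the unitalization $\tilde A=A\oplus\Z$ to the ideal $A$ itself. Since $A$ is $H'$-unital, Theorem \ref{thm:excisus} tells us that $A$ satisfies excision in $K$-theory; and $KH$ is excisive for every ring by construction. The strategy is therefore to compare the two excision long exact sequences associated to the extension $A\triqui\tilde A\to\Z$.

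First I would record the two fibration sequences of spectra. On the one hand, excision in $K$-theory (valid because $A$ is $H'$-unital) gives a homotopy fibration $K(A)\to K(\tilde A)\to K(\Z)$, since $\tilde A/A=\Z$. On the other hand, $KH$ is excisive, so we also have a homotopy fibration $KH(A)\to KH(\tilde A)\to KH(\Z)$. The comparison map \eqref{compakh} is natural, so it yields a map of fibration sequences, hence a ladder of long exact sequences in homotopy groups in which the vertical arrows are $K_m\to KH_m$ evaluated at $A$, $\tilde A$, and $\Z$ respectively. The aim is then a five-lemma argument: to conclude that $K_m(A)\to KH_m(A)$ is an isomorphism for $m\le n$, it suffices to know that $K_m(\tilde A)\to KH_m(\tilde A)$ and $K_m(\Z)\to KH_m(\Z)$ are isomorphisms in the relevant range.

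The two outer columns are handled by the unital Weibel result. The base ring $\Z$ is regular (a PID), hence $K$-regular, so $K_m(\Z)\to KH_m(\Z)$ is an isomorphism for all $m$. For $\tilde A$, I would first argue that $\tilde A$ is $K_n$-regular. This is where the hypotheses enter: $A$ is $H'$-unital and $\Z$-torsion-free, and it is assumed $K_n$-regular; I expect one can deduce $K_n$-regularity of $\tilde A$ from $K_n$-regularity of $A$ together with that of $\Z$, using the excision fibration applied to the polynomial extensions $A[t_1,\dots,t_p]\triqui\tilde A[t_1,\dots,t_p]\to\Z[t_1,\dots,t_p]$ — note that $A[t_1,\dots,t_p]$ is again $H'$-unital and $\Z$-torsion-free, so excision applies uniformly, and comparing with the ring $A$ shows that the relative $K$-groups are insensitive to adjoining polynomial variables up to degree $n$. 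Granting $\tilde A$ is unital and $K_n$-regular, the unital case of \cite{kh} gives that $K_m(\tilde A)\to KH_m(\tilde A)$ is an isomorphism for $m\le n$.

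With both outer columns being isomorphisms in degrees $\le n$, the five-lemma applied to the ladder of long exact sequences forces the middle column $K_m(A)\to KH_m(A)$ to be an isomorphism for $m\le n$ as well (the five-lemma needs the outer maps in degrees $m-1,m,m+1$, all of which lie in range for $m\le n$ provided we are slightly careful at the top end, where the surjectivity/injectivity halves of the five-lemma require only one-sided information). The main obstacle I anticipate is the $K_n$-regularity transfer step: checking that $K_n$-regularity of the nonunital $H'$-unital ring $A$ propagates to its unitalization $\tilde A$, which is the only place the $\Z$-torsion-freeness hypothesis is genuinely used (it guarantees, via Remark \ref{rem:hh'}, that $H'$-unitality coincides with Wodzicki's $H$-unitality and is stable enough under the polynomial extensions to run the excision comparison functorially in the number of variables).
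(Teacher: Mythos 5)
Your argument is essentially the paper's own proof: both use the torsion-freeness hypothesis (via Remark \ref{rem:hh'}) to see that $A[t_1,\dots,t_p]$ is $H'$-unital, apply excision to the split extension $0\to A[t_1,\dots,t_p]\to\tilde{A}[t_1,\dots,t_p]\to\Z[t_1,\dots,t_p]\to 0$ to transfer $K_n$-regularity from $A$ (and $\Z$) to $\tilde{A}$, invoke Weibel's unital comparison theorem for $\tilde{A}$, and then descend back to $A$. The one difference is bookkeeping: the paper exploits the splitting to write $K_m(\tilde{A})\cong K_m(A)\oplus K_m(\Z)$ and $KH_m(\tilde{A})\cong KH_m(A)\oplus KH_m(\Z)$ and simply splits off the $\Z$-summand, instead of running a five-lemma ladder; this automatically disposes of the top case $m=n$ that you flag, where your version needs surjectivity of $K_{n+1}(\tilde{A})\to KH_{n+1}(\tilde{A})$ (true by Weibel's paper, but not contained in the statement quoted here) --- with the splitting, the connecting maps vanish and no such one-sided input is required.
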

\begin{proof}

By Remark \ref{rem:hh'}, $A[t_1,\dots,t_p]$ is $H'$-unital for all $p$. Hence
the split exact sequence of rings
\[
0 \to A[t_1,\dots,t_p]\to \tilde{A}[t_1,\dots,t_p]\to
\Z[t_1,\dots,t_p]\to 0
\]
induces a decomposition $K_*(\tilde{A}[t_1,\dots,t_p])=K_*(\Z)\oplus K_*(A[t_1,\dots,t_p])$, since $\Z$
is $K$-regular. Thus
$\tilde{A}$ is $K_n$-regular, and therefore $K_m(\tilde{A})=KH_m(\tilde{A})=KH_m(A)\oplus K_m(\Z)$ for $m\le n$. Splitting off the summand $K_m(\Z)$, we get the result.
\end{proof}

\begin{exa}\label{exa:kreg}
Examples of $K$-regular rings include regular supercoherent rings
(see \cite[Theorem 4]{Wald}), and both stable and commutative
$C^*$-algebras (see \cite[3.4, 3.5]{rosen} and \cite[5.3]{fw}). A
theorem of Vorst (see \cite{vorst}) says that if a unital ring $R$
is $K_n$-regular, then it is $K_m$-regular for all $m\le n$. If $R$
is commutative unital and of finite type over a field of
characteristic zero, then $R$ is $K_{-\dim R}$-regular
(\cite{chsw}).
\end{exa}
\begin{thm}\label{thm:khskewyao}
Let $R$ be a unital ring and let $A$ be a ring. Let
$\phi:R\to pRp$ be a corner-isomorphism. Then

\begin{align*}  KH((R\otimes A)[\tp,\tm,\phi\otimes
1]) & \cong \cofi(KH(R\otimes A)\overset{1-\phi\otimes
1}\longrightarrow KH(R\otimes A)).
\end{align*}
\end{thm}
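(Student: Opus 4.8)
The plan is to realize the $K$-theoretic fibration of Theorem \ref{thm:skewyao} degreewise in Weibel's model $KH(B)=|\,[n]\mapsto K(B\otimes\Delta^n)\,|$ (with $\Delta^n=\Z[t_0,\dots,t_n]/(\sum_i t_i-1)$ and $K$ nonconnective) and then to discard the twisted nil-terms using homotopy invariance. The crucial new ingredient is that the twisted nil-terms vanish for $KH$: for any ring $C$ and any automorphism $\alpha\colon C\to C$, the map $KH(C)\to KH(C[t,\alpha])$ is an equivalence. To see this, let $x$ be a central variable and define $h\colon C[t,\alpha]\to C[t,\alpha][x]$ by $h|_C=\mathrm{incl}$ and $h(t)=tx$; this is a ring homomorphism, since $h(t)h(c)=txc=\alpha(c)tx=h(\alpha(c)t)=h(tc)$, using that $x$ is central and $tc=\alpha(c)t$. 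As $\mathrm{ev}_{x=1}\circ h=\mathrm{id}$ and $\mathrm{ev}_{x=0}\circ h=\iota\pi$, where $\pi\colon C[t,\alpha]\to C$ sets $t=0$ and $\iota$ is the inclusion, $h$ exhibits a polynomial homotopy from the identity to $\iota\pi$. Polynomial homotopy invariance of $KH$ then gives $KH(\iota)KH(\pi)=\mathrm{id}$, while $\pi\iota=\mathrm{id}_C$ gives $KH(\pi)KH(\iota)=\mathrm{id}$, so $KH(\iota)$ is an equivalence; the same holds for $\alpha^{-1}$.

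Next I would prove the theorem when the coefficient ring is unital. Let $C$ be unital and $\psi\colon C\to pCp$ a corner isomorphism. For each $n$, $C\otimes\Delta^n=C[t_1,\dots,t_n]$ is unital, hence $H'$-unital, and skew Laurent extension commutes with $\otimes\Delta^n$; thus Theorem \ref{thm:skewyao} applies in every simplicial degree, naturally in $[n]$. Geometric realization is a homotopy colimit, so it commutes with finite direct sums and with the formation of $\cofi$; realizing the decomposition of \ref{thm:skewyao} therefore yields $KH(C[\tp,\tm,\psi])\cong NKH(C,\psi)_+\oplus NKH(C,\psi)_-\oplus\cofi(KH(C)\overset{1-\psi}{\longrightarrow}KH(C))$, where $NKH(C,\psi)_\pm=|\,[n]\mapsto NK(C\otimes\Delta^n,\psi\otimes1)_\pm\,|$. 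Writing $\bar B=\psi^{-1}C$ for the colimit defining these nil-terms and $\hat\psi$ for the induced automorphism, the definition of the twisted nil-terms together with the compatibility of $[t,\hat\psi]$ with $\otimes\Delta^\bullet$ gives $NKH(C,\psi)_\pm=\cofi(KH(\bar B)\to KH(\bar B[t,\hat\psi^{\pm1}]))$, which vanishes by the previous paragraph. This proves the theorem for unital $C$, and the resulting equivalence is natural in $C$.

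Finally I would remove any hypothesis on the coefficients by invoking the unconditional excision of $KH$. Tensoring the split exact sequence $0\to A\to\tilde A\to\Z\to0$ with $R$ gives a split exact sequence of rings $0\to R\otimes A\to R\otimes\tilde A\to R\to0$, compatible with the corner isomorphisms, and the skew Laurent construction turns it into a short exact sequence $0\to(R\otimes A)[\tp,\tm,\phi\otimes1]\to(R\otimes\tilde A)[\tp,\tm,\phi\otimes1]\to R[\tp,\tm,\phi]\to0$. Excision then gives a homotopy fibration whose middle and right terms, $R\otimes\tilde A$ and $R$ being unital, are computed by the previous step. On the other hand, excision applied to the coefficient sequence gives a fibration $KH(R\otimes A)\to KH(R\otimes\tilde A)\to KH(R)$, and taking the cofibers of $1-\phi\otimes1$ and $1-\phi$ produces a second fibration with the same middle and right terms, matched to the first by the naturality established above. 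Comparing fibers identifies $KH((R\otimes A)[\tp,\tm,\phi\otimes1])$ with $\cofi(KH(R\otimes A)\overset{1-\phi\otimes1}{\longrightarrow}KH(R\otimes A))$, which is the assertion.

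The genuinely new point over Theorem \ref{thm:skewyao} is the vanishing of the twisted nil-terms, and the homotopy $t\mapsto tx$ disposes of it cleanly; I expect the main labor to be bookkeeping. One must verify that the degreewise equivalences of \ref{thm:skewyao} assemble to an equivalence of simplicial spectra --- i.e. that the nil-term colimits and the skew Laurent extension are compatible with $\otimes\Delta^\bullet$ --- and that the skew Laurent functor is exact on the split sequence above, so that $KH$-excision may be applied. Neither is difficult, but both require care to state precisely.
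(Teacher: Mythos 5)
Your argument is correct (up to the coherence bookkeeping you yourself flag), but it is a genuinely different proof from the one in the paper. The paper never realizes Theorem \ref{thm:skewyao} degreewise: it reduces to the case $A=\Z$ with $\phi$ an automorphism and then invokes bivariant algebraic $K$-theory \cite{kk} --- by the universal property of the triangulated category $kk$, the functor $KH$ factors through an exact functor $\overline{KH}\colon kk\to \mathrm{Ho(Spectra)}$, and \cite[Thm.~7.4.1]{kk} supplies a Pimsner--Voiculescu exact triangle $R\xrightarrow{1-\phi}R\to R[t,t^{-1},\phi]\to\Sigma R$ in $kk$, which $\overline{KH}$ converts into the desired fibration with no nil terms ever appearing; the corner and non-unital cases are then recovered by rerunning Steps 2--3 of the proof of Theorem \ref{thm:skewyao} with $KH$-excision. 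You instead obtain the base case from ingredients already in the paper: Theorem \ref{thm:skewyao} applied in each simplicial degree of Weibel's model, plus the scaling homotopy $h(t)=tx$ showing that $KH(C)\to KH(C[t,\alpha])$ is an equivalence for any ring $C$ and automorphism $\alpha$. That lemma is correct --- it is a twisted instance of Weibel's theorem that $KH$ of a nonnegatively graded ring agrees with $KH$ of its degree-zero part \cite{kh}, proved by the same homotopy, and your argument needs no unit, which matters since you apply it to the non-unital colimit ring $\bar B=\psi^{-1}C$ --- and it isolates exactly why the $NK$-terms of Theorem \ref{thm:skewyao} die in $KH$. The trade-off: the paper's route is shorter and hides all homotopy-coherence issues inside the universal property of $kk$, while yours is self-contained (no bivariant $K$-theory needed) at the price of the two verifications you list, namely that Yao's fibration and the identifications in the proof of Theorem \ref{thm:skewyao} are natural in the coefficient ring, so that the degreewise fibrations assemble into simplicial spectra before realization, and that the same naturality lets you compare the two fibrations in your excision step. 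Both verifications are routine but should be written out; with them, your proof is complete.
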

\begin{proof}
We shall assume that $A=\Z$ and $\phi$ is an isomorphism; the general case follows from this by the same argument as in the proof of Theorem \ref{thm:skewyao}, keeping in mind that $KH$ satisfies
excision for all (not necessarily $H'$-unital) rings. By \cite[Thm. 6.6.2]{kk} there exist a triangulated category $kk$ and a functor $j:\mathrm{Rings}\to kk$
which is matrix invariant and polynomial homotopy invariant, sends short exact sequences of rings to
exact triangles, and is universal initial among all such functors. Hence the functor
$\mathrm{Rings}\to \mathrm{Ho(Spectra)}$, $A\mapsto KH(A)$, factors through an exact functor
$\overline{KH}:kk\to\mathrm{Ho(Spectra)}$. By \cite[Thm. 7.4.1]{kk}, there
is an exact triangle in $kk$
\[
\xymatrix{R\ar[r]^{1-\phi}&R\ar[r]&R[t,t^{-1},\phi]\ar[r]&\Sigma R}
\]
Applying $\overline{KH}$ we get an exact triangle
\[
\xymatrix{KH(R)\ar[r]^{1-\phi}&KH(R)\ar[r]&KH(R[t,t^{-1},\phi])\ar[r]&\Sigma KH(R)}.
\]
\end{proof}

\begin{lem}\label{lem:khmorita} Let $R$ be a unital ring, $e\in R$ an idempotent. Assume $e$ is full.
Further let $A$ be any ring. Then the inclusion map $eRe\otimes A\to R\otimes A$ induces an equivalence
$KH(eRe\otimes A)\to KH(R\otimes A)$.
\end{lem}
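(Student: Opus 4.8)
The plan is to adapt the proof of Lemma \ref{lem:morita} to $KH$, exploiting the fact that $KH$ satisfies excision for \emph{all} rings (so that the $H'$-unitality hypotheses of \ref{lem:morita} can be dropped) together with its Morita invariance. First I would pass to unitizations. Put $S=R\otimes\tilde{A}$, which is a unital ring since both $R$ and $\tilde{A}$ are unital, and set $f=e\otimes 1\in S$. Using the fullness of $e$, that is $ReR=R$, one checks that $SfS=(ReR)\otimes\tilde{A}=R\otimes\tilde{A}=S$, so $f$ is a full idempotent in $S$, and moreover $fSf=eRe\otimes\tilde{A}$.

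Next I would assemble a map of ring extensions. The split surjection $\tilde{A}=A\oplus\Z\fib\Z$ produces the two extensions $eRe\otimes A\triqui eRe\otimes\tilde{A}$, with quotient $eRe$, and $R\otimes A\triqui R\otimes\tilde{A}$, with quotient $R$. Identifying $eRe\otimes\tilde{A}=fSf$ and $R\otimes\tilde{A}=S$, the corner inclusion $fSf\hookrightarrow S$ carries the first extension into the second, yielding a map of extensions whose left vertical arrow is the inclusion $eRe\otimes A\to R\otimes A$ of the statement and whose right vertical arrow is the full-idempotent inclusion $eRe\to R$. Applying $KH$ and invoking excision, each row becomes a homotopy fibration and we obtain a map of fibration sequences
\[
\xymatrix{KH(eRe\otimes A)\ar[r]\ar[d] & KH(fSf)\ar[r]\ar[d] & KH(eRe)\ar[d]\\ KH(R\otimes A)\ar[r] & KH(S)\ar[r] & KH(R).}
\]
The middle vertical map $KH(fSf)\to KH(S)$ is an equivalence by Morita invariance of $KH$, since $f$ is full in the unital ring $S$; likewise the right vertical map $KH(eRe)\to KH(R)$ is an equivalence, $e$ being full in the unital ring $R$. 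Comparing the two long exact sequences of homotopy groups and applying the five lemma — equivalently, using that a morphism of exact triangles which is an equivalence on two of the three vertices is an equivalence on the third — forces the left vertical map $KH(eRe\otimes A)\to KH(R\otimes A)$ to be an equivalence, which is the claim.

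The only step requiring real care is the bookkeeping that makes the two rows into compatible extensions, namely the verification that $f$ is full in $S$ and that $fSf=eRe\otimes\tilde{A}$, so that the reductions back to the ideals $eRe\otimes A$ and $R\otimes A$ are exactly the inclusion under study. Conceptually, the lemma hinges on Morita invariance of $KH$ being available in the form of invariance under full corner inclusions of unital rings, rather than mere matrix stability; this is the same ingredient already used for $K$-theory in \ref{lem:morita}. I do not expect any serious obstacle: the genuine difference from the proof of \ref{lem:morita} is that excision is now unconditional, so no hypothesis on $A$, on $R$, or on any tensor product of them is needed.
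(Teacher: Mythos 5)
Your proof is correct and is essentially the paper's own argument: unitize to $S=R\otimes\tilde{A}$ with full idempotent $f=e\otimes 1$, use that $KH$ satisfies excision for all rings to turn the two corner-compatible extensions into a map of fibration sequences, and conclude by unital Morita invariance together with the five lemma. The one point the paper treats differently is your appeal to ``Morita invariance of $KH$'' for the full corners $fSf\subset S$ and $eRe\subset R$: instead of citing it, the paper proves it (this is exactly the case $A=\Z$ of the lemma) by writing $KH(R)=|[n]\mapsto K(R[t_0,\dots,t_n]/\langle t_0+\cdots+t_n-1\rangle)|$ and applying Lemma \ref{lem:morita} levelwise to these unital polynomial rings, where fullness of the idempotent persists.
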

\begin{proof} By definition, $KH(R)=|[n]\to K(R[t_0,\dots,t_n]/<t_0+\dots+t_n-1>)|$. The case $A=\Z$ follows from \ref{lem:morita} applied to each of the polynomial rings $R[t_0,\dots,t_n]/<t_0+\dots+t_n-1>$. As in the proof of Lemma \ref{lem:morita}, the general case follows from the case $A=\Z$ by excision.
\end{proof}
\begin{thm}\label{thm:kh}
Let $A$ be a ring, and $E$ a row-finite quiver. Then
\[
KH(L_A(E))\cong \cofi (KH(A)^{(E_0\setminus \Si (E))}\overset{1-N_E^t}\longrightarrow
KH(A)^{(E_0)}).
\]
\end{thm}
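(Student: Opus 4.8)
The plan is to reduce the statement, exactly as in Theorem~\ref{rf-coh}, to the case of a finite quiver without sources and then invoke the previously established machinery. First I would treat the case where $E$ is a finite quiver with no sources. By the identity \eqref{skewle}, $L_A(E)=(L_0\otimes A)[t_+,t_-,\phi\otimes 1]$ is a corner skew Laurent polynomial ring, so Theorem~\ref{thm:khskewyao} applies directly (note that, unlike in the $K$-theory case, no $H'$-unitality hypothesis on $A$ is needed, since $KH$ is excisive for all rings). This gives
\[
KH(L_A(E))\cong \cofi(KH(L_0\otimes A)\overset{1-\phi\otimes 1}\longrightarrow KH(L_0\otimes A)).
\]
The remaining task is to identify this cofiber with $\cofi(KH(A)^{e_0-e_0'}\overset{1-N^t}\longrightarrow KH(A)^{e_0})$.

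To carry out that identification I would mimic the colimit computation in the proof of Theorem~\ref{thm:skewle}. Since $KH$ preserves filtering colimits and is matrix invariant, the analysis of $L_0=\bigcup_n L_{0,n}$ goes through verbatim with $K$ replaced by $KH$: each $KH(L_{0,n}\otimes A)\cong KH(A)^{(n+1)e_0'+(e_0-e_0')}$, the transition maps are the same block matrices $\Delta_n$ and $\Omega_n$, and the same elementary row-operation argument shows that the inclusion $i_{n+1}$ induces an equivalence of $\cofi(KH(A)^{e_0-e_0'}\overset{1-N^t}\longrightarrow KH(A)^{e_0})$ with $\cofi(\Delta_n-\Omega_n)$, compatibly across the colimit. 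This yields the finite-no-sources case, and Proposition~\ref{prop:MyN} (applied to $E^*$ and valid for any spectrum) gives the $M$-versus-$N$ comparison should one want it.

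For a general finite quiver $E$, I would run the inductive reduction from the proof of Theorem~\ref{row-finitecase}: pass to the complete subgraph $\tilde{E}$ without sources, apply the finite-no-sources case above, and then climb the chain $F=F^0\subset\cdots\subset F^k=E$ of complete subgraphs. The Morita-type step now uses Lemma~\ref{lem:khmorita} in place of Lemma~\ref{lem:morita}, and the cofiber-comparison step uses part~(2) of Lemma~\ref{addingv} (which is already stated for spectra of the form $\cofi(1-N^t)$ and so applies with $K$ replaced by $KH$). The crucial simplification relative to Theorem~\ref{row-finitecase} is that the twisted nil-$K$ obstruction terms $NK(L_0(\tilde{E})\otimes A,\phi\otimes 1)_\pm$ are absent here, because Theorem~\ref{thm:khskewyao} produces a clean cofiber with no $NK$ summands; consequently the split monomorphism of \eqref{monohomo} becomes an isomorphism at the level of $KH$.

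Finally, for an arbitrary row-finite $E$ I would take the filtered colimit over finite complete subquivers, using that $E=\colim F$ by \cite[Lemma~3.2]{AMP}, that $KH$ commutes with filtering colimits, and that $\cofi$ commutes with filtered colimits because such colimits are exact. I expect the main obstacle to be purely bookkeeping: verifying that all the equivalences constructed in the finite case are natural in $E$ with respect to complete-subgraph inclusions, so that they assemble correctly under the colimit. There is no genuine new difficulty, since the hard analytic inputs---the cofiber sequence for corner skew Laurent polynomials and the vanishing of the $NK$ terms---are supplied by Theorem~\ref{thm:khskewyao}; the work is to transcribe the $K$-theoretic colimit arguments of Theorems~\ref{thm:skewle} and~\ref{row-finitecase} into the $KH$ setting, where they are in fact cleaner.
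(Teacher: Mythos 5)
Your proposal is correct and is essentially identical to the paper's own proof, which proceeds in exactly the same three stages: the finite no-sources case via Theorem~\ref{thm:khskewyao} combined with the colimit argument of Theorem~\ref{thm:skewle}, the arbitrary finite case via the reduction of Theorem~\ref{row-finitecase} with Lemma~\ref{lem:khmorita} substituted for Lemma~\ref{lem:morita}, and the row-finite case by passing to the filtered colimit of finite complete subquivers. Your added observations---that no $H'$-unitality hypothesis is needed because $KH$ is excisive for all rings, and that the $NK$ obstruction terms disappear so the split monomorphism becomes an equivalence---are precisely the points the paper relies on implicitly.
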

\begin{proof} The case when $E$ is finite and has no sources follows from Theorem \ref{thm:khskewyao} using
the argument of the proof of Theorem \ref{thm:skewle}. The case for arbitrary finite $E$ follows as in the proof of Theorem \ref{row-finitecase}, substituting Lemma \ref{lem:khmorita} for \ref{lem:morita}. The general
case follows from the finite case by the same argument as in \ref{row-finitecase}.
\end{proof}

\begin{exa}
As an application of the theorem above, consider the case when $E$ is the quiver with one vertex and $n+1$ loops. In this case, $L_\Z(E)=L_{1,n}$ is the classical Leavitt ring \cite{lea}, and $N_E^t=[n+1]$. Hence by Theorem \ref{thm:khskewyao},
we get that $KH(A\otimes L_{1,n})$ is $KH$ with $Z/n$-coefficients:
\begin{equation}\label{khcoeff}
KH_*(A\otimes L_{1,n})=KH_*(A,\Z/n)
\end{equation}
Thus the effect on $KH$ of tensoring with $L_{1,n}$ is similar to the effect on $K^{\top}$ of tensoring a $C^*$-algebra with the Cuntz algebra $\cO_{n+1}$ (\cite{cuo}, \cite{cuo2}). If $A$ is a $\Z[1/n]$-algebra, then $KH_*(A,\Z/n)=K_*(A,\Z/n)$ \cite[1.6]{kh}, so we may substitute $K$-theory for homotopy $K$-theory in the right hand side of \eqref{khcoeff}.
\end{exa}

\section{Comparison with the $K$-theory of Cuntz-Krieger algebras}
In this section we consider the Cuntz-Krieger $C^*$-algebra $C^*(E)$
associated to a row-finite quiver $E$. If $\fA$ is any
$C^*$-algebra, we write $C_\fA^*(E)=C^*(E)\sotimes \fA$ for the
$C^*$-algebra tensor product. Since $C^*(E)$ is nuclear, there is no
ambiguity on the C$^*$-norm we are using here. Define a map
$\gamma^\fA_n=\gamma_n^\fA(E)$ so that the following diagram
commutes
\[
\xymatrix{K_n(C^*_\fA(E))\ar[r]&KH_n(C^*_\fA(E))\ar[d]\\ K_n(L_\fA(E))\ar[u]\ar[r]^(.55){\gamma^\fA_n} &K^{\top}_n(C^*_\fA(E))}
\]
The purpose of this section is to analyze when the map $\gamma^\fA_n$ is an isomorphism.
\goodbreak
The following is the spectrum-level version of a result of Cuntz and
Krieger \cite{ck}, \cite{cII}, later generalized by others; see e.g.
\cite[Theorem 3.2]{rasy}.
\begin{thm}\label{thm:ktop}
Let $\fA$ be a $C^*$-algebra and $E$ a row-finite quiver.
Then
\[
K^{\top}(C_\fA^*(E))=\cofi(\xymatrix{K^{\top}(\fA)^{(E_0\setminus\Si
E)}\ar[r]^{1-N^t_E}& K^{\top}(\fA)^{(E_0)}})
\]
\end{thm}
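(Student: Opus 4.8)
The plan is to obtain the fibration sequence at the level of spectra by applying a spectrum-valued topological $K$-theory functor to the very $C^*$-algebra extension that underlies the classical six-term computation of Cuntz and Krieger. Recall that topological $K$-theory of $C^*$-algebras is represented by a spectrum-valued functor $\mathbb K^{\top}$ which is $C^*$-stable, continuous (commutes with filtering colimits of $C^*$-algebras), Bott periodic, and, crucially, sends every short exact sequence of $C^*$-algebras to a homotopy fibration of spectra; equivalently it factors as an exact functor through Kasparov's triangulated category $KK$. Thus, exactly as with the $KH$-computation of Theorem \ref{thm:kh}, the content of the statement is not the underlying long exact sequence (which is classical) but the identification of an honest exact triangle of spectra whose first map is $1-N_E^t$.

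First I would treat a finite quiver $E$ without sinks, mirroring the proof of Theorem \ref{thm:skewle}. Here $C^*(E)$ carries a gauge action of the circle whose fixed-point algebra $\cF$ is AF, and $C^*(E)$ is the corner skew crossed product of $\cF$ by the same corner endomorphism $\phi(x)=t_+xt_-$ used algebraically. The $C^*$-analogue of Theorem \ref{thm:skewyao} --- the Pimsner--Voiculescu exact sequence for $C^*_\fA(E)$ --- produces a homotopy fibration
\[
\mathbb K^{\top}(\cF\sotimes\fA)\xrightarrow{1-\phi_*}\mathbb K^{\top}(\cF\sotimes\fA)\to \mathbb K^{\top}(C^*_\fA(E)).
\]
The AF core $\cF$ is the inductive limit of the vertex matrix algebras, with transition maps given by the incidence matrix exactly as in the identification of $L_0=\bigcup L_{0,n}$; continuity of $\mathbb K^{\top}$ then rewrites $\cofi(1-\phi_*)$ as $\cofi(1-N_E^t\colon K^{\top}(\fA)^{(E_0)}\to K^{\top}(\fA)^{(E_0)})$, using Proposition \ref{prop:MyN} to pass between the vertex and edge indexings if convenient. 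No twisted nil terms appear: since $\cF$ is AF, it is topologically \emph{regular}, which is the shadow of the vanishing of the $NK$-groups and the reason the topological statement is cleaner than its algebraic counterpart.

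Next I would incorporate sinks and then pass to the general row-finite case. Sinks are accommodated exactly as in the bookkeeping leading to the definition of $1-N_E^t$ with its deleted sink columns: in the Cuntz--Pimsner description the relevant ideal is $C_0(E_0\setminus\Si(E))$, since the Cuntz--Krieger relation holds only at non-sink vertices, so the domain of $1-N_E^t$ becomes $K^{\top}(\fA)^{(E_0\setminus\Si(E))}$ while each sink contributes a free summand with no relation; this is the topological version of Lemma \ref{addingv}. Finally, writing a row-finite $E$ as the filtered colimit of its finite complete subgraphs and using that both $\mathbb K^{\top}$ and $\cofi$ commute with filtering colimits gives the general statement, just as in the proof of Theorem \ref{row-finitecase}.

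The main obstacle is the one point where the argument must go beyond the classical homotopy-group statement: producing the exact triangle functorially in $\fA$ and identifying its first map \emph{as a map of spectra} with $1-N_E^t$, rather than merely recovering $1-N_E^t$ on homotopy groups. This requires that the Cuntz--Pimsner (or Toeplitz) extension be set up naturally and then tensored with $\fA$ without destroying exactness, which is guaranteed by the nuclearity of $C^*(E)$ noted at the start of the section; and it requires tracking the inclusion of the ideal through $\mathbb K^{\top}$, together with the inductive-limit identification of $\cF$, carefully enough to see the matrix $1-N_E^t$ on the nose.
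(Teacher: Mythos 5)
Your proposal is correct and follows essentially the same route as the paper: reduce to a finite quiver, obtain the Pimsner--Voiculescu triangle for the norm-completed core $\mathcal F$ of $L_0(E)\otimes\fA$ in $KK$ and transport it to spectra via the universal property of $KK$ (the paper cites \cite[Theorem 8.27]{cmr} for exactly the point you flag as the main obstacle), identify the cofiber with $1-N_E^t$ just as in the proof of Theorem \ref{thm:skewle}, and pass to general row-finite quivers by filtering colimits as in Theorem \ref{row-finitecase}. Two small corrections to your bookkeeping: with this paper's conventions the core case is finite quivers without \emph{sources} (not sinks), since $t_+=\sum_i\alpha_i$ requires every vertex to receive an arrow, and the spectrum-level fibration for the corner endomorphism $\phi$ is obtained by Cuntz's stabilization trick --- writing $\mathcal K\sotimes C_\fA^*(E)$ as a crossed product of $\mathcal K\sotimes\mathcal F$ by an honest automorphism, applying Pimsner--Voiculescu there, and then destabilizing in $KK$.
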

\begin{proof}
The proof follows the same steps as the one of Theorem \ref{thm:kh}.
In particular, the same arguments allow us to reduce to the case of
a finite quiver $E$ with no sources. In this case essentially the
same proof as in \cite[Proposition 3.1]{cII} applies. Namely, note
that $L_{\fA}(E)$ is isomorphic to a dense $*$-subalgebra of
$C_\fA^* (E)$, and let $\mathcal F$ be the norm completion of
$L_0(E)\otimes \fA$ in $C_\fA^* (E)$. Then $\mathcal K\sotimes
C_\fA^* (E)$ is a crossed product of $\mathcal K\sotimes \mathcal F$
by an automorphism $\hat{\phi}$, and Pimsner-Voiculescu  gives an
exact triangle
\begin{equation*}
\begin{CD}
\mathcal K \sotimes \mathcal F @>1-\hat{\phi}>> \mathcal K \sotimes
\mathcal F @>>> \mathcal K \sotimes C_\fA^*(E) @>>> \Sigma (\mathcal
K \sotimes \mathcal F)
\end{CD}
\end{equation*}
in KK. Now stability gives the following exact triangle in KK:
\begin{equation}
\label{PVtriangle}
\begin{CD}
\mathcal F @>1-\phi>>  \mathcal F @>>>  C_\fA^*(E) @>>> \Sigma
\mathcal F
\end{CD}
\end{equation}
where $\phi $ is just a corner-isomorphism. Since
$C^*\text{-alg}\longrightarrow \text{KK}$ is universal amongst all
stable, homotopy invariant, half-exact for cpc-split extensions
functors to a triangulated category and
$$C^*\text{-alg}\longrightarrow  \text{Ho(Spectra)},\qquad  A\mapsto K^{top}(A)$$
is one such functor which in addition maps mapping cone triangles to
exact triangles in  $\text{Ho(Spectra)}$,  the exact triangle
(\ref{PVtriangle}) is exact in Ho(Spectra); see \cite[Theorem
8.27]{cmr}. But just as in the proof of Theorem \ref{thm:skewle}, we
get
\begin{align*} &\cofi(\xymatrix{K^{\top}(\mathcal
F)\ar[r]^{1-\phi} &
K^{\top}(\mathcal F)})\\
& \cong \cofi(\xymatrix{K^{\top}(\fA)^{(E_0\setminus\Si
E)}\ar[r]^{1-N^t_E}& K^{\top}(\fA)^{(E_0)}})
\end{align*}
This concludes the proof.
\end{proof}

\begin{cor}\label{cor:kktopnn-1} Assume $K_*(\fA)\to K^{\top}_*(\fA)$ is an isomorphism for $*=n,n-1$. Then $\gamma^\fA_n$ is a split
surjection. If in addition $K_*(\fA)\to KH_*(\fA)$ and $K_*(L_\fA(E))\to KH_*(L_\fA(E))$ are isomorphisms
for $*=n,n-1$, then $\gamma_n$ is an isomorphism.
\end{cor}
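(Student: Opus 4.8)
The plan is to factor $\gamma^\fA_n$ through homotopy $K$-theory and then compare the three cofibre sequences available to us. Write $C=\cofi(K(\fA)^{(E_0\setminus\Si(E))}\overset{1-N_E^t}\to K(\fA)^{(E_0)})$, so that $\pi_*(C)$ sits in the long exact sequence \eqref{intro:cles}. Using the defining square of $\gamma^\fA_n$ together with the naturality of the comparison map \eqref{compakh} applied to the inclusion $L_\fA(E)\hookrightarrow C_\fA^*(E)$, I would first record the factorization
\[
\gamma^\fA_n=\delta_n\circ c_n,\qquad c_n\colon K_n(L_\fA(E))\to KH_n(L_\fA(E)),
\]
where $\delta_n\colon KH_n(L_\fA(E))\to K^{\top}_n(C_\fA^*(E))$ is the composite of $KH_n(L_\fA(E))\to KH_n(C_\fA^*(E))$ with the comparison $KH_n(C_\fA^*(E))\to K^{\top}_n(C_\fA^*(E))$. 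The coefficient comparisons $K(\fA)\to KH(\fA)\to K^{\top}(\fA)$ commute with the integral matrix $1-N_E^t$, so by Theorems \ref{thm:kh} and \ref{thm:ktop} they induce maps of cofibre sequences $C\to KH(L_\fA(E))\to K^{\top}(C_\fA^*(E))$, giving ladders of long exact sequences whose vertical maps on the coefficient terms are direct sums of $K_*(\fA)\to KH_*(\fA)$ and $KH_*(\fA)\to K^{\top}_*(\fA)$.

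For the first assertion I would invoke the naturally split injection $j_*\colon \pi_*(C)\to K_*(L_\fA(E))$ of Theorem \ref{row-finitecase}, which applies since a $C^*$-algebra is $H'$-unital and torsion free as a $\Z$-module. Tracing the constructions shows that $\gamma^\fA_n\circ j_n$ coincides with the map $\pi_n(C)\to K^{\top}_n(C_\fA^*(E))$ induced on cofibres by the coefficient comparison $K(\fA)\to K^{\top}(\fA)$. Under the hypothesis that $K_*(\fA)\to K^{\top}_*(\fA)$ is an isomorphism for $*=n,n-1$, the four flanking vertical maps in the corresponding ladder are isomorphisms, so the five lemma makes $\gamma^\fA_n\circ j_n$ an isomorphism. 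Hence $\gamma^\fA_n$ is surjective, and $j_n\circ(\gamma^\fA_n\circ j_n)^{-1}$ is a section; this proves the split surjectivity.

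For the second assertion the split injection is no longer needed. The added hypothesis says exactly that $c_n$ is an isomorphism, so it suffices to prove that $\delta_n$ is an isomorphism. Since the comparison $K(\fA)\to K^{\top}(\fA)$ factors as $K(\fA)\to KH(\fA)\to K^{\top}(\fA)$ and both $K_*(\fA)\to KH_*(\fA)$ and $K_*(\fA)\to K^{\top}_*(\fA)$ are isomorphisms for $*=n,n-1$, the intermediate map $KH_*(\fA)\to K^{\top}_*(\fA)$ is an isomorphism for $*=n,n-1$ as well. Feeding this into the five lemma for the ladder comparing the cofibre sequences of Theorems \ref{thm:kh} and \ref{thm:ktop} shows that $\delta_n$ is an isomorphism, whence $\gamma^\fA_n=\delta_n\circ c_n$ is an isomorphism.

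The delicate point in both parts is the identification of $\delta_n$ (and of $\gamma^\fA_n\circ j_n$) with the coefficient-induced maps on cofibres. This is where I would spend the most care: Theorem \ref{thm:ktop} is proved by the same structural reduction as Theorem \ref{thm:kh}---to a finite quiver without sources, via the corner skew Laurent and Pimsner--Voiculescu description---and the ring inclusion $L_\fA(E)\hookrightarrow C_\fA^*(E)$ carries $L_0(E)\otimes\fA$ into the completion $\mathcal F$ compatibly with the corner isomorphism $\phi$. Naturality of $K\to KH\to K^{\top}$ throughout this reduction, together with the functoriality of the split injection of Theorem \ref{row-finitecase}, gives that $c_n\circ j_n$ and $\delta_n$ are the maps induced by the coefficient comparisons; verifying this compatibility by inspecting the constructions is the main obstacle.
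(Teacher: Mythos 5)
Your proposal is correct and follows essentially the same route as the paper's own proof: the paper applies the five lemma to the ladder of cofibre sequences induced by the coefficient comparison $K(\fA)\to K^{\top}(\fA)$, then cites Theorems \ref{row-finitecase} and \ref{thm:ktop} for the split surjection and Theorem \ref{thm:kh} (together with the factorization through $KH$) for the isomorphism. Your write-up simply makes explicit the naturality and compatibility identifications (e.g.\ that $\gamma^\fA_n\circ j_n$ is the cofibre comparison map) which the paper leaves implicit.
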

\begin{proof} We have
\begin{multline*}
\pi_n \big(\cofi(\xymatrix{K(\fA)^{(E_0\setminus\Si E)}\ar[r]^(.6){1-N^t_E}& K(\fA)^{(E_0)}})\big)\\
\cong \pi_n\big( \cofi(\xymatrix{K^{\top}(\fA)^{(E_0\setminus\Si
E)}\ar[r]^(.6){1-N^t_E}& K^{\top}(\fA)^{(E_0)}})\big)
\end{multline*}
by the five lemma. Next apply Theorems \ref{row-finitecase} and \ref{thm:ktop} to obtain the first assertion.
For the second assertion, use Theorem \ref{thm:kh}.
\end{proof}

\begin{thm}\label{thm:sus}
Let $E$ be a finite quiver without sinks.
Assume that $\det(1-N_{E}^t)\ne 0$. Then $\gamma_n^\C$ is an isomorphism
for $n\ge 0$ and the zero map for $n\le -1$.
\end{thm}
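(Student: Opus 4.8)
The plan is to reduce the statement to a comparison between two explicit cofiber sequences built from the single matrix $1-N_E^t$. Since $E$ has no sinks, $E_0\setminus\Si(E)=E_0$ and $N_E=N_E'$ is the full $e_0\times e_0$ adjacency matrix, so $1-N_E^t$ is square; the hypothesis $\det(1-N_E^t)\ne 0$ says precisely that it is injective over $\Z$, with finite cokernel $F:=\coker(1-N_E^t\colon\Z^{e_0}\to\Z^{e_0})$ of order $|\det(1-N_E^t)|$. As $\C$ is a field, hence regular supercoherent, Theorem \ref{rf-coh} gives $K(L_\C(E))\cong\cofi(K(\C)^{e_0}\overset{1-N_E^t}\to K(\C)^{e_0})$, while Theorem \ref{thm:ktop} gives $K^{\top}(C^*_\C(E))\cong\cofi(K^{\top}(\C)^{e_0}\overset{1-N_E^t}\to K^{\top}(\C)^{e_0})$. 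First I would verify that both descriptions are natural with respect to the dense inclusion $L_\C(E)\hookrightarrow C^*_\C(E)$ and the corner isomorphism $\phi$, so that $\gamma^\C$ is induced on cofibers by the coefficient comparison $c\colon K(\C)\to K^{\top}(\C)$ (which factors through $KH(\C)\cong K(\C)$, since $\C$ is $K$-regular) applied entrywise. This produces a map of long exact sequences whose outer vertical maps are $c$.

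Next I would read off the homotopy groups. Topologically, Bott periodicity together with injectivity of $1-N_E^t$ give $K^{\top}_n(C^*_\C(E))\cong F$ for $n$ even and $0$ for $n$ odd, for every $n\in\Z$. Algebraically I would use the structure of $K_*(\C)$ (a consequence of Suslin's computations): it vanishes in negative degrees, equals $\Z$ in degree $0$, is uniquely divisible in even degrees $\ge 2$, and is divisible with torsion subgroup $\Q/\Z$ in odd degrees $\ge 1$. Feeding this into the algebraic long exact sequence, and using that $1-N_E^t$ is invertible on any $\Q$-vector space and surjective on any divisible group, with $\ker(1-N_E^t)\cong F$ on $(\Q/\Z)^{e_0}$ by the snake lemma applied to $0\to\Z^{e_0}\to\Q^{e_0}\to(\Q/\Z)^{e_0}\to 0$, I would obtain $K_n(L_\C(E))\cong F$ for even $n\ge 0$ and $K_n(L_\C(E))=0$ for $n$ odd and for all $n\le -1$. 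In particular the source of $\gamma_n^\C$ vanishes for $n\le -1$, so $\gamma_n^\C$ is the zero map there, while the odd positive case is $0\to 0$.

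It remains to prove that $\gamma_n^\C\colon F\to F$ is an isomorphism in each even degree $n\ge 0$, and I expect this to be the main obstacle. The difficulty is that here the source $F$ comes from the kernel term in degree $n-1$ of the algebraic sequence whereas the target $F$ comes from the cokernel term in degree $n$ of the topological sequence, so the naturality squares are merely consistent and do not pin down $\gamma_n^\C$ (indeed $c$ vanishes on $K_n(\C)$ for even $n\ge 2$, and $K^{\top}_{n-1}(\C)=0$). The decisive input is Suslin's rigidity theorem, $K_*(\C;\Z/m)\cong K^{\top}_*(\C;\Z/m)$, which makes $c$ an isomorphism with finite coefficients; by naturality of the cofiber construction this forces $\gamma_n^\C\otimes\Z/m$ to be an isomorphism for every $m$ and every $n$. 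Finally I would bootstrap to integral coefficients: for even $n\ge 0$ the universal coefficient sequence gives $K_n(L_\C(E);\Z/m)\cong F/m$ and $K^{\top}_n(C^*_\C(E);\Z/m)\cong F/m$, the neighboring groups being $0$, so $\gamma_n^\C\otimes\Z/m$ is the reduction of $\gamma_n^\C$ modulo $m$; taking $m=|F|=|\det(1-N_E^t)|$, for which $F/m=F$ and reduction is the identity, identifies $\gamma_n^\C\otimes\Z/m$ with $\gamma_n^\C$ itself, whence $\gamma_n^\C$ is an isomorphism.
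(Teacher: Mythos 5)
Your proposal is correct, and it rests on the same pillars as the paper's proof: the identifications of $K(L_\C(E))$ and $K^{\top}(C^*_\C(E))$ as cofibers of $1-N_E^t$ (Theorems \ref{rf-coh} and \ref{thm:ktop}), Suslin's rigidity theorem for $\C$ with finite coefficients, and a universal-coefficient bootstrap exploiting that all the groups in sight are killed by a fixed integer. Where you genuinely differ is in how the mod-$m$ isomorphism is converted into an integral one. The paper never computes $K_*(L_\C(E))$: it only observes that these groups are annihilated by $n^2$ (where $n=|\det(1-N_E^t)|$), so that the universal coefficient sequences collapse to $0\to K_m\to K_m(\,\cdot\,;\Z/n^2)\to K_{m-1}\to 0$ on both sides, and then it runs an induction on the degree, with $\gamma_0^\C$ (an isomorphism by the five lemma applied integrally) as the base case. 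You instead compute both sides completely --- $F$ in even degrees $\ge 0$, zero otherwise --- using the divisibility structure of $K_*(\C)$, and then dispose of each even degree separately by identifying $\gamma_n^\C$ with its own mod-$|F|$ reduction. Your route needs one extra input (divisibility of $K_{\ge 1}(\C)$, also due to Suslin) but avoids the induction and makes the source of the difficulty explicit: the algebraic copy of $F$ is a kernel term in degree $n-1$ while the topological copy is a cokernel term in degree $n$, so naturality alone cannot conclude --- a point the paper leaves implicit. One caveat: your parenthetical claim that $\gamma_n^\C\otimes\Z/m$ is an isomorphism ``for every $n$'' is too strong. Suslin rigidity gives the coefficient isomorphism $K_*(\C;\Z/m)\to K_*^{\top}(\C;\Z/m)$ only in degrees $*\ge 0$, and the mod-$m$ comparison for the cofibers genuinely fails in negative degrees (the algebraic side vanishes there while the topological side, being $2$-periodic, does not); the five lemma yields the isomorphism only for $n\ge 0$, which is fortunately all you use.
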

\begin{proof} Because $\C$ is regular supercoherent, we have
\begin{equation}\label{Lc}
K(L_\C(E))\cong \cofi(\xymatrix{K(\C)^{(E_0)}\ar[r]^{1-N^t_E}&
K(\C)^{(E_0)}}),
\end{equation}
by Theorem \ref{rf-coh}. Thus $K_n(L_\C(E))=0$ for $n\le -1$, and
$\gamma^\C_0$ is an isomorphism by the five lemma. Moreover if
$n=|\det(1-N^t_E)|$, then $n^2K_*(L_\C(E))=0$, by \eqref{Lc}. Hence
the sequence
\begin{equation}\label{ktors}
0\to K_m(L_\C(E))\to K_m(L_\C(E),\Z/n^2)\to K_{m-1}(L_\C(E))\to 0
\end{equation}
is exact for all $m$. On the other hand, by \eqref{Lc} and Theorem \ref{thm:ktop}, we have a map of exact sequences
$(m\in\Z)$
\[
\xymatrix{K_m(\C,\Z/n^2)^{(E_0)}\ar[d]\ar[r]&K^{\top}_m(\C,\Z/n^2)^{(E_0)}\ar[d]\\
          K_m(\C,\Z/n^2)^{(E_0)} \ar[r]\ar[d]& K^{\top}_m(\C,\Z/n^2)^{(E_0)}\ar[d]\\
          K_m(L_\C(E),\Z/n^2)\ar[r]\ar[d]& K^{\top}_m(C^*_\C(E),\Z/n^2)\ar[d]\\
          K_{m-1}(\C,\Z/n^2)^{(E_0)}\ar[r]\ar[d]& K^{\top}_{m-1}(\C,\Z/n^2)^{(E_0)} \ar[d]\\
          K_{m-1}(\C,\Z/n^2)^{(E_0)} \ar[r]& K^{\top}_{m-1}(\C,\Z/n^2)^{(E_0)}}
\]
By a theorem of Suslin \cite{sus2} the comparison map $K_m(\C,\Z/q)\to K^{\top}_m(\C,\Z/q)$ is
an isomorphism for $m\ge 0$ and $q\ge 1$. Hence the map $K_*(L_\C(E),\Z/q)\to K^{\top}_*(L_\C(E),\Z/q)$
is an isomorphism, by Theorems \ref{rf-coh} and \ref{thm:ktop}. Combine this together with \eqref{ktors} and induction to finish the proof.
\end{proof}
\begin{rem} Chris Smith, a student of Gene Abrams, has given a geometric characterization of those finite quivers $E$ with no sinks which satisfy $\det(1-N_E^t)\ne 0$ \cite{smith}.
\end{rem}
\begin{exa} It follows from the theorem above that the map $\gamma_n^{\fA}$ is an isomorphism for every
finite dimensional $C^*$-algebra $\fA$. Let $\{\fA_n\to \fA_{n+1}\}_n$ be an inductive system of
finite dimensional $C^*$-algebras; write $A$ and $\fA$ for its algebraic and its $C^*$-colimit. Because $K$-theory
commutes with algebraic filtering colimits and $K^{\top}$ commutes with $C^*$-filtering colimits,
we conclude that, for $E$ as in the theorem abovem, the map $K_*(L_A(E))\to K_*(L_\fA(E))$ is an isomorphism
for $*\ge 0$.
\end{exa}

\begin{rem} Let $E$ be a finite quiver with sinks, $\tilde{E}\subset E$ as in Lemma \ref{Ewithoutso}, and
$F=\tilde{E}\cup \Si(E)$. Then, by Theorem \ref{rf-coh} and the proof of Theorem \ref{row-finitecase},
$K_n(L_\C(E)=K_n(L_\C(F))=K_n(L_\C(\tilde{E}))\oplus K_n(\C)^{\Si(E)}$. Similarly,
\[K^{\top}_n(C_\C^*(E))=K_n^{\top}(C_\C^*(\tilde{E}))\oplus K^{\top}_n(\C)^{\Si(E)}.\]
By naturality,
$\gamma_n^\C$ restricts on $K_n(\C)^{\Si(E)}$ to the direct sum of copies of the comparison map
$K_n(\C)\to K^{\top}_n(\C)$. Since the latter map is not an isomorphism for $n\ne 0$, it follows
that $\gamma_n^\C$ is not an isomorphism either.
\end{rem}

\begin{rem} It has been shown that if $\fA$ is a properly infinite $C^*$-algebra then the comparison
map $K_*(\fA)\to K_*^{\top}(\fA)$ is an isomorphism \cite{cp}. Thus $K_*(C_\C^*(E))\to K^{\top}_*(C_\C^*(E))$ is
an isomorphism whenever $C_\C^*(E)$ is properly infinite.
\end{rem}

The following proposition is a variant of a theorem of Higson (see \cite[3.4]{rosen}) that asserts that stable
$C^*$-algebras are $K$-regular.

\begin{prop}\label{prop:stablereg}
Let $A$ be an $H'$-unital ring, and $\fB$ a stable $C^*$-algebra. Then $A\otimes \fB$ is $K$-regular.
\end{prop}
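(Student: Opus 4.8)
The plan is to reduce the assertion to the polynomial homotopy invariance of $K$-theory on the family of rings obtained from $A\otimes\fB$ by adjoining polynomial variables, and then to prove that homotopy invariance by importing Higson's stability argument with $A$ carried along as a passive tensor factor. First I would record the two standing facts that make the reduction work: since a $C^*$-algebra is an $H'$-unital $\Q$-algebra, Corollary \ref{cor:otimes} gives that $A\otimes\fB$ is $H'$-unital, so $K$-theory satisfies excision on it (Theorem \ref{thm:excisus}); and by Remark \ref{rem:hh'} each polynomial ring $A[t_1,\dots,t_p]$ is again $H'$-unital. Because $(A\otimes\fB)[t_1,\dots,t_p]=A[t_1,\dots,t_p]\otimes\fB$, every ring appearing in the definition of $K$-regularity is again of the form ``$H'$-unital $\otimes$ stable $C^*$-algebra.'' Consequently it is enough to prove the single homotopy-invariance statement that $K_*(A'\otimes\fB)\to K_*((A'\otimes\fB)[t])$ is an isomorphism for every $H'$-unital ring $A'$; feeding in $A'=A[t_1,\dots,t_j]$ successively and inducting on $p$ then yields $K_n(A\otimes\fB)\cong K_n((A\otimes\fB)[t_1,\dots,t_p])$ for all $n$ and $p$, which is exactly $K$-regularity.

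For the homotopy-invariance step I would follow the proof of Higson's theorem (\cite[3.4]{rosen}), observing that the only structural input there is the stability of the $C^*$-algebra. Concretely, stability gives an isomorphism $\fB\cong\fB\sotimes\cK$, and the ``Hilbert hotel'' in $\cK$ supplies isometries in the multiplier algebra of $\fB$ realizing a sum homomorphism $\fB\oplus\fB\to\fB$ and, more importantly, a norm-continuous homotopy connecting the corner embedding $b\mapsto b\sotimes e_{11}$ to the stabilization isomorphism. Tensoring these multipliers with $1_A$ produces the same infinite-sum-and-homotopy structure on $A\otimes\fB$, where the operators $1\otimes s_i$ act as multipliers of $A\otimes\fB$. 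This structure forces the two evaluations $\ev_0,\ev_1\colon (A\otimes\fB)[t]\to A\otimes\fB$ to agree on $K$-theory, so the inclusion $A\otimes\fB\to(A\otimes\fB)[t]$ is a $K$-equivalence; non-unitality of $A$ is handled throughout by passing to unitizations and invoking excision, which is legitimate because $A\otimes\fB$ and its polynomial rings are $H'$-unital.

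The main obstacle I expect is the passage from the topological content of Higson's argument to an honest statement about the algebraic tensor product $A\otimes\fB$: the Hilbert-hotel homotopy is a norm-continuous path of isometries, and one must extract from it an actual $K$-theoretic homotopy between $\ev_0$ and $\ev_1$ in the algebraic setting, using that $\cK=\colim_n M_n(\C)$ and that $K$-theory commutes with the relevant filtering colimits. The second, bookkeeping, difficulty is ensuring that all of this remains compatible with the merely $H'$-unital (hence nonunital) factor $A$; here the systematic use of excision (Theorem \ref{thm:excisus}) to move between $A\otimes\fB$ and the unital rings on which Higson's calculation is available is what makes the argument go through.
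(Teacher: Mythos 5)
Your first paragraph is correct and coincides with the paper's reduction: using Lemma \ref{lem:Qhuni} and Corollary \ref{cor:otimes} one may assume $A$ is an $H'$-unital $\Q$-algebra, every ring $A[t_1,\dots,t_p]\otimes\fB$ is again of the form ``$H'$-unital $\otimes$ stable $C^*$-algebra'', and $K$-regularity reduces to one-variable polynomial homotopy invariance of $A'\mapsto K_*(A'\otimes\fB)$ over all $H'$-unital $A'$. The gap is in your second step, and it is the obstacle you yourself flag, which your sketch does not overcome. Having isometries and a norm-continuous path of multipliers on $A\otimes\fB$ does \emph{not} ``force the two evaluations to agree on $K$-theory'': algebraic $K$-theory is not invariant under norm-continuous homotopies (that failure is the very subject of the comparison maps in this paper), and in Higson's theorem the passage from stability to homotopy invariance is not carried by the Hilbert-hotel data itself but by a theorem about \emph{functors} --- every stable, split-exact functor on $C^*$-algebras is homotopy invariant --- whose proof builds quasi-homomorphisms and infinite repetitions of the form $\sum_n x_n\sotimes e_{nn}$, norm-convergent sums living in $\fB\sotimes\cK$ which have no counterpart in the algebraic tensor product $A\otimes\fB$. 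This is exactly why ``carrying $A$ along as a passive tensor factor'' through that proof breaks down. Your proposed repair, that $\cK=\colim_n M_n(\C)$ and $K$-theory commutes with the relevant filtering colimits, also fails: $\cK$ is the norm \emph{completion} of $\colim_n M_n(\C)$, not the algebraic colimit, and algebraic $K$-theory does not commute with completed colimits; concretely $\colim_n K_1(M_n(\C))=\C^\times$ while $K_1(\cK)=K_1^{\top}(\C)=0$.

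The paper's proof keeps all of the analysis inside the $C^*$-variable, which is what makes the algebraic factor $A$ harmless. It considers the functor $\fA\mapsto E(\fA)=K_*\bigl(A\otimes(\fB\sotimes\fA)\bigr)$ on $C^*$-algebras $\fA$, checks that it is matrix-stable ($K$-theory is matrix stable on $H'$-unital rings) and split exact (excision, Theorem \ref{thm:excisus}, applied to the $H$-unital rings $A\otimes(\fB\sotimes\fA)$ furnished by Corollary \ref{cor:otimes}), and then invokes Higson's homotopy invariance theorem \cite{hig} as a black box: such a functor is invariant under continuous homotopies. Hence $E(\ev_0)=E(\ev_1)$ for the evaluations $\ev_t:\C[0,1]\to\C$, and since the polynomial evaluations $A[t]\to A$ factor through $A\otimes\C[0,1]\to A$, the two evaluations $(A\otimes\fB)[t]\to A\otimes\fB$ agree on $K$-theory; running this over $A$, $A[t_1]$, $A[t_1,t_2],\dots$ gives $K$-regularity. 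All infinite-sum constructions happen inside Higson's theorem, in $C^*$-algebras, and $A$ enters only through functoriality of $E$. If you replace your second step by this functorial formulation, your first paragraph completes to a correct proof.
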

\begin{proof}
By Lemma \ref{lem:Qhuni} we may assume that $A$ is a $\Q$-algebra. Since $A\to A[t]$ preserves $H$-unitality,
the proposition amounts to showing that the functor $A\mapsto K_*(A\otimes\fB)$ is invariant under polynomial
homotopy. Observe that if $\fA$ is any $C^*$-algebra, then $A\otimes(\fB\sotimes\fA)$ is $H$-unital, which
implies that the functor $A\mapsto E(A)=K_*(A\otimes(\fB\sotimes\fA))$, which is stable (because $K$-theory is
matrix stable on $H'$-unital rings), is also split exact. Hence $E$ is invariant under continuous homotopies,
by Higson's homotopy invariance theorem \cite{hig}.
Thus $E$ sends all the evaluation maps $\ev_t:\C[0,1]\to \C$ to the same map. But since the evaluation
maps $\ev_i:A[t]\to A$ factor through $\ev_i:A\otimes\C[0,1]\to A$, it
follows that $A\mapsto E(\C)=K_*(A\otimes\fB)$ is invariant under polynomial homotopies, as we had to prove.
\end{proof}
\begin{cor}\label{cor:stablereg}
If $\fB$ is a stable $C^*$-algebra and $E$ a row-finite quiver, then both $\fB$ and $L_\fB(E)$ are $K$-regular,
and the map of Theorem \ref{row-finitecase}
\[
\cofi (K(\fB)^{(E_0\setminus \Si (E))}\overset{1-N_E^t}\longrightarrow
K(\fB)^{(E_0)})\to K(L_\fB(E))
\]
is an equivalence.
\end{cor}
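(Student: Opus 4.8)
The plan is to settle the three assertions in order: the $K$-regularity of $\fB$ and of $L_\fB(E)$ will come straight from Proposition \ref{prop:stablereg}, and then the split injection of Theorem \ref{row-finitecase} will be identified as an equivalence by passing to homotopy $K$-theory, where Theorem \ref{thm:kh} already supplies the cofiber formula with no nil-terms. For the $K$-regularity statements I would argue as follows. Since the unital ring $\Z$ is $H'$-unital, Proposition \ref{prop:stablereg} applied with $A=\Z$ gives that $\fB=\Z\otimes\fB$ is $K$-regular. For $L_\fB(E)=L_\Z(E)\otimes\fB$, note that $L_\Z(E)$ is itself $H'$-unital: by \cite[Lemma 3.2]{AMP} it is the filtered colimit of the unital rings $L_\Z(F)$ with $F$ ranging over the finite complete subgraphs of $E$, and $H'$-unitality is preserved under filtered colimits (Remark \ref{rem:hh'}). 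Hence Proposition \ref{prop:stablereg} with $A=L_\Z(E)$ shows that $L_\fB(E)$ is $K$-regular, which proves the first assertion.

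To compare with $KH$, I would observe that $\fB$ and $L_\fB(E)$ are both $H'$-unital and torsion free as $\Z$-modules: a $C^*$-algebra is a $\C$-vector space, and $L_\Z(E)$ is free over $\Z$ on the monomial basis of paths, so $L_\fB(E)\cong\bigoplus_{\mathrm{paths}}\fB$ as an abelian group. Being moreover $K$-regular, they satisfy the hypotheses of Lemma \ref{lem:reghuni}, which yields that the comparison maps $K_*(\fB)\to KH_*(\fB)$ and $K_*(L_\fB(E))\to KH_*(L_\fB(E))$ are isomorphisms for every $*\in\Z$.

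Finally I would exploit the naturality of the comparison $K\to KH$. It produces a commutative square whose top horizontal map is the map of Theorem \ref{row-finitecase} for $K$, whose bottom horizontal map is the equivalence of Theorem \ref{thm:kh} for $KH$, whose left vertical map is the comparison between the two corresponding $\cofi$-terms (the cofibers of $1-N_E^t$ on the $\fB$-coefficient $K$- and $KH$-spectra), and whose right vertical map is the comparison map $K(L_\fB(E))\to KH(L_\fB(E))$. The right vertical map is a $\pi_*$-isomorphism by the previous paragraph; the left vertical is induced by $K(\fB)\to KH(\fB)$, which is a $\pi_*$-isomorphism, so it too is a $\pi_*$-isomorphism since $\cofi$ preserves such maps; and the bottom map is an equivalence by Theorem \ref{thm:kh}. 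A diagram chase then forces the top map to be a $\pi_*$-isomorphism, hence an equivalence of spectra, which is exactly the third assertion (equivalently, the nil-terms of Remark \ref{rem:coker} vanish).

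The step I expect to be the main obstacle is justifying the commutativity of this square, namely that the maps furnished by Theorems \ref{row-finitecase} and \ref{thm:kh} correspond under the transformation $K\to KH$. This should follow because the proof of Theorem \ref{thm:kh} carries out the construction of Theorems \ref{thm:skewle} and \ref{row-finitecase} verbatim with $KH$ in place of $K$, using only formal properties---excision, matrix invariance and compatibility with filtering colimits---shared by both theories; the construction is therefore natural in the underlying $K$-theoretic functor, and the comparison transformation respects it. I would make this precise by checking that each elementary building block of the construction (the Yao fibration of Theorem \ref{thm:skewyao}, the corner-isomorphism colimit argument, and the Morita reductions of Lemmas \ref{lem:morita} and \ref{lem:khmorita}) is a natural square for $K\to KH$, so that the full comparison square assembles from these.
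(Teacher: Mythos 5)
Your proposal is correct and follows essentially the same route as the paper's own proof: $K$-regularity of $\fB$ and $L_\fB(E)$ from Proposition \ref{prop:stablereg}, $H'$-unitality (via Corollary \ref{cor:otimes}) and torsion-freeness feeding into Lemma \ref{lem:reghuni} to identify $K$ with $KH$, and then Theorem \ref{thm:kh} to conclude. The only difference is one of detail: you make explicit the naturality of the comparison square relating the map of Theorem \ref{row-finitecase} to the $KH$-equivalence, a point the paper compresses into ``Now apply Theorem \ref{thm:kh}.''
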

\begin{proof}
That $\fB$ and $L_\fB(E)$ are $K$-regular is immediate from the
proposition; by Corollary \ref{cor:otimes}, they are also
$H$-unital. It follows from this and from Lemma \ref{lem:reghuni}
that the comparison maps $K(\fB)\to KH(\fB)$ and $K(L_\fB(E))\to
KH(L_\fB(E))$ are equivalences. Now apply Theorem \ref{thm:kh}.
\end{proof}
\begin{thm}\label{thm:stable}
If $\fB$ is a stable $C^*$-algebra then the map $\gamma_n^\fB$ is an isomorphism for every $n$ and
every row-finite quiver $E$.
\end{thm}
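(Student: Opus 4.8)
The plan is to deduce the theorem directly from Corollary \ref{cor:kktopnn-1}, applied with $\fA=\fB$ and carried out for each $n\in\Z$ separately. That corollary reduces the assertion that $\gamma_n^\fB$ is an isomorphism to three comparison statements in degrees $*=n,n-1$: that $K_*(\fB)\to K^{\top}_*(\fB)$ is an isomorphism, that $K_*(\fB)\to KH_*(\fB)$ is an isomorphism, and that $K_*(L_\fB(E))\to KH_*(L_\fB(E))$ is an isomorphism. Once all three hold for every consecutive pair of degrees, the conclusion follows for all $n$.

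The last two of these are already in hand. Indeed, by Corollary \ref{cor:stablereg} the coefficient algebra $\fB$ and the Leavitt algebra $L_\fB(E)$ are both $K$-regular and $H$-unital, and the comparison maps $K(\fB)\to KH(\fB)$ and $K(L_\fB(E))\to KH(L_\fB(E))$ are equivalences of spectra. Hence $K_*(\fB)\to KH_*(\fB)$ and $K_*(L_\fB(E))\to KH_*(L_\fB(E))$ are isomorphisms in every degree, in particular in degrees $n$ and $n-1$. So the only remaining ingredient is the identification of algebraic with topological $K$-theory for the coefficient algebra.

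It therefore remains to verify that $K_*(\fB)\to K^{\top}_*(\fB)$ is an isomorphism in every degree. This is the one genuinely analytic input, and it is precisely the theorem of Suslin and Wodzicki \cite{suswod} asserting that algebraic and topological $K$-theory agree on stable $C^*$-algebras: for $\fB$ stable the comparison map $K_n(\fB)\to K^{\top}_n(\fB)$ is an isomorphism for all $n\in\Z$. Here one uses that the algebraic $K$-theory spectrum is nonconnective, so that the comparison is defined and an isomorphism in negative degrees as well, matching the Bott-periodic topological side. With this in place, all three hypotheses of Corollary \ref{cor:kktopnn-1} are met for every $n$, and the corollary yields that $\gamma_n^\fB$ is an isomorphism for all $n$ and every row-finite quiver $E$.

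I expect the preparatory results to absorb essentially all of the work, so that the argument itself is formal; the only step that is not bookkeeping is the identification $K_*(\fB)\cong K^{\top}_*(\fB)$ for stable $\fB$, which rests on the Suslin--Wodzicki excision machinery rather than on anything specific to Leavitt algebras. Thus the main obstacle is not in the present proof but in having the correct form of that comparison theorem available in all degrees of the nonconnective theory.
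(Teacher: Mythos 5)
Your proof is correct and follows essentially the same route as the paper: the paper deduces the theorem directly from Corollary \ref{cor:stablereg}, Theorem \ref{thm:ktop}, and the comparison isomorphism $K_n(\fB)\cong K^{\top}_n(\fB)$, while you repackage exactly the same ingredients through Corollary \ref{cor:kktopnn-1} (whose proof is precisely that five-lemma argument), using the $K\cong KH$ equivalences from Corollary \ref{cor:stablereg} to verify its hypotheses. The only quibble is attribution: the paper credits the comparison isomorphism $K_n(\fB)\to K^{\top}_n(\fB)$ to Karoubi \cite{kar} for $n\le 0$ and to Suslin--Wodzicki \cite{suswod} for $n\ge 1$, whereas you credit the whole range, including negative degrees, to \cite{suswod}.
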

\begin{proof}
The theorem is immediate from Corollary \ref{cor:stablereg}, Theorem \ref{thm:ktop}, and the fact
(proved in \cite{kar} for $n\le 0$ and in \cite{suswod} for $n\ge 1$) that the map
$K_n(\fB)\to K^{\top}_n(\fB)$ is an isomorphism for all $n$.
\end{proof}

\begin{rem} If $\fB$ is stable, then $C_\fB^*(E)$ is stable, and thus the comparison map
$K_*(C_\fB^*(E))\to K^{\top}_*(C_\fB^*(E))$ is an isomorphism. Moreover we also have
$KH(C_\fB^*(E))\cong K^{\top}_*(C_\fB^*(E))$, by \ref{prop:stablereg}.
\end{rem}

\begin{ack} Part of the research for this article was carried out during visits of the third named author
to the Centre de Recerca Matem\`atica and the Departament de Matem\`atiques of the Universitat Aut\`onoma
de Barcelona. He is indebted to these institutions for their hospitality.
\end{ack}

\end{document}